\documentclass[12pt]{amsart}
\usepackage[mathscr]{eucal}
\usepackage{amsmath}
\usepackage{amssymb}
\usepackage{latexsym}
\usepackage[dvips]{graphicx}
%\include{dummy}

%\usepackage{showkeys}
%\usepackage{setspace}
%\doublespacing

\theoremstyle{plain}
	\newtheorem{theorem}{Theorem}[section]
	\newtheorem{lemma}[theorem]{Lemma}
	\newtheorem{corollary}[theorem]{Corollary}
	
	\newtheorem{proposition}[theorem]{Proposition}
	\newtheorem{remark}[theorem]{Remark}
	\newtheorem{conjecture}[theorem]{Conjecture}

\theoremstyle{plain}
	\newtheorem{maintheorem}{Theorem}
	
	\newtheorem{maincorollary}[maintheorem]{Corollary}

\def\R{\mathbb{R}}

\def\bbS{\mathbb{S}}

\def\calB{\mathcal{B}}

\def\calF{\mathcal{F}}

\def\calH{\mathcal{H}}
\def\calL{\mathcal{L}}

\def\calZ{\mathcal{Z}}

\def\Z{\mathbb{Z}}

\def\e{\varepsilon}
\def\tu{\tilde{u}}
\def\bu{\bar{u}}
\def\wtB{\widetilde{B}}
\def\whB{\widehat{B}}
\def\wtR{\widetilde{R}}
\def\tpsi{\tilde{\psi}}

% **********************************************************
% Setting
% **********************************************************

\setlength{\topmargin}{0cm}
\setlength{\oddsidemargin}{0cm}
\setlength{\evensidemargin}{0cm}
\setlength{\textheight}{22cm}
\setlength{\textwidth}{16cm}

\makeatletter
 \@addtoreset{equation}{section}
\makeatother

%\begin{frontmatter}

\begin{document}

% **********************************************************
% Title of This Paper
% **********************************************************

\title[Emden-Fowler equation]{The Emden-Fowler equation\\ on a spherical cap of $\bbS^N$}
\thanks{The second author was supported by JSPS KAKENHI Grant Number 24740100.}

\author{Atsushi Kosaka}
\address{Bukkyo University,
96, Kitahananobo-cho, Murasakino, Kita-ku, Kyoto 603-8301, Japan}
\email{a-kosaka@bukkyo-u.ac.jp}

\author{Yasuhito Miyamoto}
\address{Graduate School of Mathematical Sciences, The University of Tokyo,
3-8-1 Komaba, Meguro-ku, Tokyo 153-8914, Japan}
\email{miyamoto@ms.u-tokyo.ac.jp}

\begin{abstract}
Let $\bbS^N\subset\R^{N+1}$, $N\ge 3$, be the unit sphere, and let $S_{\Theta}\subset\bbS^N$ be a geodesic ball with geodesic radius $\Theta\in(0,\pi)$.
We study the bifurcation diagram $\{(\Theta,\left\|U\right\|_{\infty})\}\subset\R^2$ of the radial solutions of the Emden-Fowler equation on $S_{\Theta}$
\[
\begin{cases}
\Delta_{\bbS^N}U+U^p=0 & \textrm{in}\ S_{\Theta},\\
U=0 & \textrm{on}\ \partial S_{\Theta},\\
U>0 & \textrm{in}\ S_{\Theta},
\end{cases}
\]
where $p>1$.
Among other things, we prove the following: For each $p>p_{\rm S}:=(N-2)/(N+2)$, there exists $\underline{\Theta}\in(0,\pi)$ such that the problem has a radial solution for $\Theta\in(\underline{\Theta},\pi)$ and has no radial solution for $\Theta\in(0,\underline{\Theta})$.
Moreover, this solution is unique in the space of radial functions if $\Theta$ is close to $\pi$.
If $p_{\rm S}<p<p_{\rm JL}$, then there exists $\Theta^*\in(\underline{\Theta},\pi)$ such that the problem has infinitely many radial solutions for $\Theta=\Theta^*$, where
\[
p_{\rm JL}=\begin{cases}
1+\frac{4}{N-4-2\sqrt{N-1}} & \textrm{if}\ N\ge 11,\\
\infty & \textrm{if}\ 2\le N\le 10.
\end{cases}
\]
Asymptotic behaviors of the bifurcation diagram as $p\to\infty$ and $p\downarrow 1$ are also studied.
\end{abstract}

\date{\today}
\subjclass[2010]{Primary: 35J60, 35B32; Secondary: 34C23, 34C10}
\keywords{Bifurcation diagram, Joseph-Lundgren exponent, Singular solution, Infinitely many turning points}
\maketitle

%\begin{keyword}
% keywords here, in the form: keyword \sep keyword
%Global branch \sep Bifurcation \sep Nodal curve \sep Nodal domain\\
% PACS codes here, in the form: \PACS code \sep code
%\PACS 
%{\it 2000 MSC:} 35B32 \sep 35J25 \sep 35P30
%\end{keyword}

%\end{frontmatter}

% **********************************************************
% **********************************************************
% **********************************************************
% Section 1
% **********************************************************
% **********************************************************
% **********************************************************
\section{Introduction and Main results}
Let $\bbS^N\subset\R^{N+1}$, $N\ge 3$, be the unit sphere, and let $S_{\Theta}\subset\bbS^N$ be the geodesic ball centered at the North Pole with geodesic radius $\Theta\in(0,\pi)$.
We call $S_{\Theta}$ the spherical cap.
In this paper we are concerned with the solution of the Emden-Fowler equation on $S_{\Theta}$
\begin{equation}\label{EFS}
\begin{cases}
\Delta_{\bbS^N}U+U^p=0 & \textrm{in}\ \ S_{\Theta},\\
U=0 & \textrm{on}\ \ \partial S_{\Theta},\\
U>0 & \textrm{in}\ \ S_{\Theta},
\end{cases}
\end{equation}
where $\Delta_{\bbS^N}$ denotes the Laplace-Beltrami operator on $\bbS^N$ and $p>1$.
In the Euclidean case it is well known that the qualitative property of the structure of the solutions of the problem
\begin{equation}\label{EFE}
\begin{cases}
\Delta U+U^p=0 & \textrm{in}\ \ B_{\Lambda},\\
U=0 & \textrm{on}\ \ \partial B_{\Lambda},\\
U>0 & \textrm{in}\ \ B_{\Lambda}
\end{cases}
\end{equation}
depends on $p$, and does not depend on $\Lambda$.
Here, $B_{\Lambda}\subset\R^N$ denotes the ball centered at the origin $O$ with radius $\Lambda>0$.
By the symmetry result of Gidas, {\it et al.}\cite{GNN79}, every solution of (\ref{EFE}) is radially symmetric.
The critical Sobolev exponent
\[
p_{\rm S}:=
\begin{cases}
\frac{N+2}{N-2}, & \textrm{if}\ N\ge 3,\\
\infty, & \textrm{if}\ N=1,2
\end{cases}
\]
plays an important role.
It is known that (\ref{EFE}) has a unique solution if $1<p<p_{\rm S}$, and has no solution if $p\ge p_{\rm S}$ (See Poho\'zaev~\cite{P65}).
In the hyperbolic space the moving plane method is applicable and every positive solution of a semilinear elliptic equation with general nonlinearity on a geodesic ball with radius $\Lambda>0$ is radially symmetric. See \cite{KP98,SW12} for this symmetry result.
Bonforte, {\it et al.} \cite{BGGV13} showed, among other things, that in the hyperbolic space the Emden-Fowler equation on the geodesic ball with radius $\Lambda>0$ has a unique positive solution if $1<p<p_{\rm S}$, and has no solution if $p\ge p_{\rm S}$.
Thus, the hyperbolic case is qualitatively the same as the Euclidean case.
In the spherical case Padilla~\cite{P97} and Kumaresan-Prajapat~\cite{KP98} showed that if $S_{\Theta}$ is included in a hemisphere ($0<\Theta<\frac{\pi}{2}$), then every positive solution of a semilinear elliptic equation with general nonlinearity is radially symmetric.
On the other hand, if $S_{\Theta}$ includes a hemisphere ($\frac{\pi}{2}<\Theta<\pi$), then there is a semilinear elliptic equation such that it has a nonradial positive solution.
See \cite{BW07,Mi13} for the existence of nonradial positive solutions.
As far as (\ref{EFS}) is concerned, if $0<\Theta<\pi$ and $1<p\le p_{\rm S}$, then one can easily show that the solution is radial, changing variables and applying the symmetry result of \cite{GNN79} to the equation.
When $\Theta=\frac{\pi}{2}$ and $p>1$, the radial symmetry of a solution of (\ref{EFS}) is guaranteed by \cite[Theorem~1]{SW12}.
The question whether a solution of (\ref{EFS}) is radial in the case where $\frac{\pi}{2}<\Theta<\pi$ and $p>p_{\rm S}$ seems to remain open.
In this paper we restrict ourselves to radially symmetric solutions.

This study is motivated by the result of Bandle-Peletier~\cite{BP99}.
In the case where $N=3$ and $p=p_{\rm S}(=5)$ they showed that (\ref{EFS}) has no solution if $S_{\Theta}$ is included in a hemisphere, and has a radial solution if $S_{\Theta}$ includes a hemisphere.
This indicates that the solution structure depends not only on $p$ but also on the radius $\Theta$.
Actually, we will see in Corollary~\ref{B} below that (\ref{EFS}) has a solution even in the supercritical case $p>p_{\rm S}$ if $\Theta$ is close to $\pi$.
Hence, the solution structure in the spherical case is different from the solution structures in both the Euclidean and hyperbolic cases.
The difference between the Euclidean and spherical cases was also found in the structure of the positive solutions of the Brezis-Nirenberg problem
\[
\begin{cases}
\Delta_{\bbS^3}u+\lambda u+u^5=0 & \textrm{in}\ S_{\Theta}(\subset\bbS^3),\\
u=0 & \textrm{on}\ \partial S_{\Theta}
\end{cases}
\]
which involves the critical Sobolev exponent.
See \cite{BB02,BP06} for details.
It seems that the present paper is the first attempt to study the supercritical Emden-Fowler equation on a spherical cap.
The supercritical Emden-Fowler equation on other manifolds was studied in Berchio, {\it et al.}\cite{BFG14}.

Let us explain the problem in detail.
Let $\theta$ be the geodesic distance from the North Pole of $\bbS^N$.
Let $p>1$ be fixed.
Then the solution $U$ of (\ref{EFS}) depends only on $\theta$.
The problem (\ref{EFS}) can be reduced to the ODE
\begin{equation}
\begin{cases}\label{EFODE}
U''+(N-1)\frac{\cos\theta}{\sin\theta}U'+U^p=0, & 0<\theta<\Theta,\\
U(\Theta)=0,\\
U>0, & 0\le\theta<\Theta.
\end{cases}
\end{equation}
We consider the possibly sign-changing solution of the initial value problem
\begin{equation}\label{IVP}
\begin{cases}
U''+(N-1)\frac{\cos\theta}{\sin\theta}U'+|U|^{p-1}U=0, & 0<\theta<\pi,\\
U(0)=\Gamma>0,\ U'(0)=0.\\
\end{cases}
\end{equation}
In Lemma~\ref{S3L2} we will see that the regular solution $U(\,\cdot\,)$ of (\ref{IVP}) has the first positive zero $\Theta(\Gamma)\in(0,\pi)$.
In Theorem~\ref{A} below we show that $\Theta(\Gamma)$ is a $C^1$-function defined on $0<\Gamma<\infty$.
It is clear that $U(\theta)$ $(0\le\theta\le\Theta(\Gamma))$ is decreasing.
Hence, $\|U\|_{C^0(S_{\Theta(\Gamma)})}=\Gamma$.
The set of all the regular radial solutions of (\ref{EFS}) can be represented by the bifurcation diagram $\{(\Theta(\Gamma),\Gamma)\}\subset\R^2$.
Thus, in this paper we mainly study the graph of the function $\Theta(\Gamma)$.

By $p_{\rm JL}$ we define the Joseph-Lundgren exponent \cite{JL73}, i.e.,
\[
p_{\rm JL}:=
\begin{cases}
1+\frac{4}{N-4-2\sqrt{N-1}}, & \textrm{if}\ N\ge 11,\\
\infty, & \textrm{if}\ 2\le N\le 10.
\end{cases}
\]
% ****************************************************************************
% Theorem A
% ****************************************************************************
\begin{maintheorem}[Supercritical]\label{A}
Suppose that $N\ge 3$ and $p>p_{\rm S}$.
Let $\Theta(\Gamma)$ be the first positive zero of the solution of (\ref{IVP}).
Then the following hold:\\
(i) The function $\Theta(\Gamma)$ is of class $C^1$. For each $\Gamma>0$, $0<\Theta(\Gamma)<\pi$.\\
(ii) $\Theta(\Gamma)\rightarrow\pi$ as $\Gamma\downarrow 0$. If $\Gamma>0$ is small, then $\Theta'(\Gamma)<0$.\\
(iii) $\Theta(\Gamma)\rightarrow \Theta^*$ as $\Gamma\rightarrow\infty$, where $\Theta^*\in(0,\pi)$ is defined in Theorem~\ref{C} below.\\
(iv) If $p_{\rm S}<p<p_{\rm JL}$, then $\Theta(\Gamma)$ oscillates infinitely many times around $\Theta^*$ as $\Gamma\rightarrow\infty$.
\end{maintheorem}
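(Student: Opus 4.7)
The backbone of the argument is the Emden--Fowler-type change of variables $U(\theta) = (\sin\theta)^{-\alpha}w(t)$ with $\alpha := 2/(p-1)$ and $t := \log\tan(\theta/2)$, which sends $\theta\in(0,\pi)$ diffeomorphically onto $t\in\mathbb{R}$ and transforms~(\ref{IVP}) into
\[
w_{tt} - \gamma\tanh t\cdot w_t + \alpha w - \alpha(N-1-\alpha)\tanh^2 t\cdot w + |w|^{p-1}w = 0,
\]
where $\gamma := N-2-2\alpha$ and $\beta := \alpha(N-2-\alpha)$. Since $\tanh t\to\mp 1$ as $t\to\mp\infty$, this has the autonomous limits $w_{tt}\pm\gamma w_t - \beta w + |w|^{p-1}w = 0$ at the two endpoints. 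For $p>p_{\rm S}$ one has $\gamma,\beta>0$, and the left-endpoint limit has equilibria $w=0$ (a saddle whose unstable eigenvalue is exactly $\alpha$, since $\alpha^2+\gamma\alpha-\beta=0$) and $w=w^* := \beta^{1/(p-1)}$. All four parts are then studied in the $(w,w_t)$ phase plane.

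Part (i) follows from standard ODE theory at the regular singular point $\theta=0$, which yields existence, uniqueness, and $C^1$ dependence on $\Gamma$ for the regular solution; Lemma~\ref{S3L2} provides the first zero $\Theta(\Gamma)\in(0,\pi)$. Transversality $U'(\Theta(\Gamma);\Gamma)<0$ holds because $U(\Theta)=U'(\Theta)=0$ would force $U\equiv 0$ by uniqueness at the regular point $\Theta\in(0,\pi)$, contradicting $U(0)=\Gamma$. The implicit function theorem applied to $U(\Theta;\Gamma)=0$ then gives $\Theta\in C^1$.

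For (ii), rescale $V:=U/\Gamma$ to obtain $V'' + (N-1)\cot\theta\,V' + \Gamma^{p-1}|V|^{p-1}V = 0$ with $V(0)=1$, $V'(0)=0$. As $\Gamma\downarrow 0$, $V\to 1$ locally uniformly on $[0,\pi)$, and the leading correction is $V = 1 - \Gamma^{p-1}\Psi(\theta) + o(\Gamma^{p-1})$, where
\[
\Psi(\theta) := \int_0^\theta(\sin\tau)^{-(N-1)}\int_0^\tau(\sin s)^{N-1}\,ds\,d\tau
\]
solves $\Psi''+(N-1)\cot\theta\,\Psi'=1$ with $\Psi(0)=\Psi'(0)=0$. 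Since $\sin^{-(N-1)}\tau$ is non-integrable at $\tau=\pi$, $\Psi(\theta)\to\infty$ there, so the first zero satisfies $\Psi(\Theta(\Gamma))=\Gamma^{1-p}(1+o(1))$, forcing $\Theta(\Gamma)\to\pi$. Differentiating $U(\Theta(\Gamma);\Gamma)=0$ gives $\Theta'(\Gamma) = -U_\Gamma/U_\theta$; the numerator equals $1-p\Gamma^{p-1}\Psi(\Theta(\Gamma))+o(1)\to 1-p<0$, while the denominator is negative by transversality, yielding $\Theta'(\Gamma)<0$ for small $\Gamma$.

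For (iii)--(iv), by Theorem~\ref{C} the singular solution $\bar U(\theta)\sim w^*\theta^{-\alpha}$ exists and has first zero $\Theta^*$; in the $w$-variable $\bar w(t) := (\sin\theta)^\alpha\bar U(\theta)$ satisfies the transformed equation, with $\bar w(t)\to w^*$ as $t\to-\infty$ and $\bar w(t^*)=0$ where $t^* := \log\tan(\Theta^*/2)$. Regular solutions, by contrast, satisfy $w(t;\Gamma)\sim 2^\alpha\Gamma\,e^{\alpha t}$ as $t\to-\infty$ along the unstable manifold of the saddle $0$. To prove (iii), show that as $\Gamma\to\infty$ the trajectory $w(\,\cdot\,;\Gamma)$ reaches any fixed small neighborhood of $w^*$ at some time $t_\Gamma\to-\infty$, after which, by continuous dependence on initial data and on the time-parameter (the autonomous coefficients are already accurate to exponentially small error for $t\ll 0$), it tracks $\bar w$ locally uniformly on $[t_\Gamma, t^*]$; convergence of the first zeros then gives $\Theta(\Gamma)\to\Theta^*$. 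For (iv), the linearization of the left autonomous equation at $w^*$ has characteristic polynomial $\lambda^2+\gamma\lambda+(p-1)\beta$, whose discriminant $\gamma^2-4(p-1)\beta$ is negative precisely when $p<p_{\rm JL}$, so $w^*$ is a spiral focus. As $\Gamma\to\infty$ the trajectory spends arbitrarily long time in a neighborhood of $w^*$, winding around it an unbounded number of times; each half-turn produces a new crossing of $\{w=0\}$ near $t^*$, which translates to infinitely many oscillations of $\Theta(\Gamma)$ around $\Theta^*$. The main technical obstacle lies in controlling the non-autonomous remainder (the factors $1+\tanh t$, exponentially small at $-\infty$) over these long winding intervals, which is handled by a Gronwall estimate in the linearized dynamics around $w^*$ that absorbs the perturbation into the leading spiral behavior.
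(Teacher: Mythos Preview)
Your change of variables is essentially the same one the paper uses (its stereographic projection $r=\tan(\theta/2)$ followed by the Emden transformation $y=u/\bar u^*$, $t=\tfrac1m\log r$ is, up to normalization, your $(w,t)$), and your treatment of (i) matches the paper's Section~3. For (ii) your perturbation argument around $V\equiv 1$ is a reasonable alternative to the paper's route, which instead gets $\Theta'(\Gamma)<0$ by a Sturm comparison of $w:=\partial_\gamma u$ against an explicit eigenfunction $\psi_0(r)=A(r)^{(N-2)/2}(A(r)-1)$ of the linearized operator; note however that you are evaluating the expansion $V=1-\Gamma^{p-1}\Psi+o(\Gamma^{p-1})$ at $\theta=\Theta(\Gamma)\to\pi$, where the ``correction'' $\Gamma^{p-1}\Psi$ is of order one, so the uniformity of the remainder needs to be argued and is not quite as automatic as you suggest.

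The real gap is in (iii)--(iv). For (iii), ``the trajectory reaches a small neighborhood of $w^*$ at $t_\Gamma\to-\infty$ and then tracks $\bar w$ by continuous dependence'' does not work as stated: at time $t_\Gamma$ you only know that $(w,w_t)$ and $(\bar w,\bar w_t)$ are both within $\varepsilon$ of $(w^*,0)$, not that they are close to \emph{each other}, and continuous dependence gives nothing on the interval $[t_\Gamma,t^*]$ whose length tends to infinity. The paper handles this by a Lyapunov function $E(y,z,t)$ which traps the trajectory in $\Omega_{2\varepsilon}$ uniformly on $[-t_0,T_\varepsilon]$ (Lemma~5.4), then uses Arzel\`a--Ascoli to pass to a limit $y_*$, and crucially invokes the \emph{uniqueness} of the singular solution satisfying $y(t)\to1$ as $t\to-\infty$ (Lemma~4.2) to identify $y_*=y^*$. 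Without this uniqueness step your limit is not pinned down.

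For (iv), the sentence ``each half-turn produces a new crossing of $\{w=0\}$ near $t^*$'' is incorrect: the spiraling occurs around $w^*>0$, not around $0$, so half-turns produce crossings of $w$ through $w^*$ (equivalently, intersections of $U(\,\cdot\,,\Gamma)$ with the singular solution $U^*$), not zeros of $w$. For each fixed $\Gamma$ there is exactly one first zero. The mechanism the paper uses to convert the spiraling into oscillation of $\Theta(\Gamma)$ is a zero-counting argument: since $U$ and $U^*$ solve the same second-order ODE, the zeros of $U-U^*$ on $I(\gamma)=(0,\min\{\Theta(\Gamma),\Theta^*\})$ are simple and move continuously in $\Gamma$; they cannot enter at $0$ (where $U^*=+\infty$), so the only way $\calZ_{I(\gamma)}[U-U^*]$ can increase is for a new zero to enter through the right endpoint, which forces $\Theta(\Gamma)=\Theta^*$ at that instant. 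Since the blow-up shows $\calZ_{I(\gamma)}[U-U^*]\to\infty$, this happens infinitely often, and the parity of the intersection count determines on which side of $\Theta^*$ the curve sits in between. Your argument is missing this step entirely; the Gronwall control you invoke addresses how long the spiral persists, not how the first zero moves with $\Gamma$.
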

\noindent
See Figure~\ref{FIG} (a) for the bifurcation diagram in the case $p_{\rm S}<p<p_{\rm JL}$.
\begin{figure}[t]
\begin{center}
\includegraphics{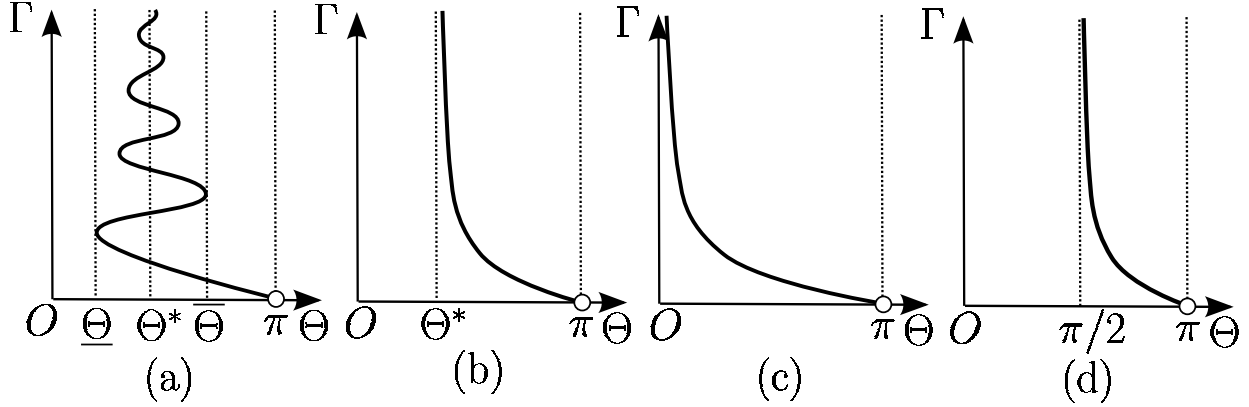}
\label{FIG}
\caption{Schematic bifurcation diagrams: (a) $p_{\rm S}<p<p_{\rm JL}$ (Theorem~\ref{A}), (b) $p\ge p_{\rm JL}$ (Conjecture~\ref{Conj1}), (c) $1<p\le p_{\rm S}$ $(N\ge 4)$ and $1<p<p_{\rm S}$ $(N=3)$ (Proposition~\ref{CriSub}), (d) $p=p_{\rm S}$ and $N=3$ (Proposition~\ref{CriSub}).}
\end{center}
\end{figure}
When $3\le N\le 10$, $p_{\rm JL}=\infty$, and hence, (iv) always holds.
An immediate consequence of Theorem~\ref{A} is the following:
% ****************************************************************************
% Theorem B
% ****************************************************************************
\begin{maincorollary}\label{B}
Suppose that $N\ge 3$ and $p>p_{\rm S}$.
Then the following hold:\\
(i) There exists $\underline{\Theta}>0$ such that (\ref{EFODE}) has no regular solution for $\Theta\in(0,\underline{\Theta})$ and has a regular solution for $\Theta\in(\underline{\Theta},\pi)$.\\
(ii) If $p_{\rm S}<p<p_{\rm JL}$, then (\ref{EFODE}) has a regular solution for $\Theta=\underline{\Theta}$, where $\underline{\Theta}$ is given in (i).\\
(iii) If $p_{\rm S}<p<p_{\rm JL}$, then (\ref{EFODE}) has infinitely many regular solutions for $\Theta=\Theta^*$, where $\Theta^*$ is given in Theorem~\ref{C} below.\\
(iv) There exists $\overline{\Theta}\in (0,\pi)$ such that (\ref{EFODE}) has a unique regular solution for $\Theta\in (\overline{\Theta},\pi)$. This solution is nondegenerate in the space of radial functions.
\end{maincorollary}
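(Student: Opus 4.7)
The four claims of Corollary \ref{B} are all essentially properties of the continuous function $\Gamma\mapsto\Theta(\Gamma)$ whose behavior is described by Theorem \ref{A}. My plan is to deduce each statement from Theorem \ref{A} using elementary topological arguments (intermediate value theorem, compactness), together with a standard relation between $\Theta'(\Gamma)$ and the linearized operator for the nondegeneracy assertion.

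For (i), I would set $\underline{\Theta}:=\inf_{\Gamma>0}\Theta(\Gamma)$. Using Theorem \ref{A}(i) and (iii), $\Theta$ is continuous on $(0,\infty)$ with $\lim_{\Gamma\to\infty}\Theta(\Gamma)=\Theta^{*}>0$, so $\Theta(\Gamma)$ is bounded below away from $0$ on a neighborhood of $\infty$; on the complementary compact set continuity gives a positive minimum, hence $\underline{\Theta}>0$. By definition no regular solution exists for $\Theta\in(0,\underline{\Theta})$. For $\Theta\in(\underline{\Theta},\pi)$ I pick $\Gamma_{1}$ with $\Theta(\Gamma_{1})<\Theta$ (definition of inf) and, using Theorem \ref{A}(ii) which gives $\Theta(\Gamma)\to\pi$ as $\Gamma\downarrow0$, a $\Gamma_{2}\in(0,\Gamma_{1})$ with $\Theta(\Gamma_{2})>\Theta$; then the intermediate value theorem applied to $\Theta$ on $[\Gamma_{2},\Gamma_{1}]$ produces the desired $\Gamma$.

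For (ii), the extra hypothesis $p_{\rm S}<p<p_{\rm JL}$ lets me use Theorem \ref{A}(iv): the oscillation around $\Theta^{*}$ forces the existence of a sequence $\Gamma_{n}\to\infty$ with $\Theta(\Gamma_{n})<\Theta^{*}$, so $\underline{\Theta}<\Theta^{*}$. Since $\Theta(\Gamma)\to\pi$ as $\Gamma\downarrow 0$ and $\Theta(\Gamma)\to\Theta^{*}$ as $\Gamma\to\infty$, for any $\varepsilon>0$ with $\underline{\Theta}+\varepsilon<\Theta^{*}$ the sublevel set $\{\Theta(\Gamma)\le\underline{\Theta}+\varepsilon\}$ is contained in some compact interval $[a,b]\subset(0,\infty)$. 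Continuity on $[a,b]$ then forces the infimum to be attained. For (iii), the same oscillation statement implies by the intermediate value theorem that the equation $\Theta(\Gamma)=\Theta^{*}$ has infinitely many solutions $\Gamma_{n}$, each yielding a distinct regular solution of \eref{EFODE} (distinct initial data produce distinct solutions of the IVP).

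For (iv), Theorem \ref{A}(ii) gives $\Gamma_{0}>0$ such that $\Theta'<0$ on $(0,\Gamma_{0})$, so $\Theta$ is strictly decreasing and in particular injective on this interval. I then set $M:=\sup_{\Gamma\ge\Gamma_{0}}\Theta(\Gamma)$ and verify $M<\pi$: since $\Theta(\Gamma)\to\Theta^{*}<\pi$, for some $\Gamma_{1}\ge\Gamma_{0}$ one has $\Theta(\Gamma)\le(\pi+\Theta^{*})/2$ on $[\Gamma_{1},\infty)$, while on $[\Gamma_{0},\Gamma_{1}]$ the continuous function $\Theta$ attains its maximum, which is strictly less than $\pi$ by Theorem \ref{A}(i). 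Taking $\overline{\Theta}:=\max\{\Theta(\Gamma_{0}),M\}$, any $\Gamma$ with $\Theta(\Gamma)>\overline{\Theta}$ must lie in $(0,\Gamma_{0})$, and strict monotonicity forces uniqueness. For nondegeneracy in the space of radial functions, I would argue as follows: denoting $U(\theta;\Gamma)$ the solution of \eref{IVP}, the function $V_{\Gamma}:=\partial U/\partial\Gamma$ is the unique (up to scalar) solution of the linearized equation $V''+(N-1)\cot\theta\,V'+pU^{p-1}V=0$ that is regular at $\theta=0$. Differentiating $U(\Theta(\Gamma);\Gamma)=0$ in $\Gamma$ gives
\[
V_{\Gamma}(\Theta(\Gamma))=-U_{\theta}(\Theta(\Gamma);\Gamma)\,\Theta'(\Gamma).
\]
Since $U_{\theta}(\Theta(\Gamma);\Gamma)<0$ (the solution decreases to zero) and $\Theta'(\Gamma)<0$ on $(0,\Gamma_{0})$, the right side is nonzero, so the radial linearized Dirichlet problem at this solution has trivial kernel.

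The main obstacle is purely expository: making sure the nondegeneracy reduction in (iv) is justified and that the compactness argument in (ii) is stated correctly (one must rule out the infimum being achieved only in the limit $\Gamma\to\infty$, which is exactly where Theorem \ref{A}(iv) is needed to guarantee $\underline{\Theta}<\Theta^{*}$). Everything else is a routine IVT/compactness packaging of Theorem \ref{A}.
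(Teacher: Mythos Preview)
Your proposal is correct and follows essentially the same approach as the paper: define $\underline{\Theta}$ as the infimum of $\Theta(\Gamma)$, use the limits at $0$ and $\infty$ from Theorem~\ref{A} together with continuity and the intermediate value theorem for (i)--(iii), and for (iv) use the strict monotonicity near $\Gamma=0$ plus the fact that $\sup_{\Gamma\ge\Gamma_0}\Theta(\Gamma)<\pi$, with nondegeneracy coming from the relation between $\Theta'(\Gamma)$ and the linearized equation. Your write-up is in fact somewhat more detailed than the paper's (e.g.\ the explicit compactness argument in (ii) and the derivation of $V_{\Gamma}(\Theta(\Gamma))\neq 0$), but the underlying ideas are identical.
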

\noindent
The problem (\ref{EFODE}) has a singular solution $U^*(\theta)$ such that $U^*(\theta)=O(\theta^{-\frac{2}{p-1}})$ $(\theta\downarrow 0)$.
% ****************************************************************************
% Theorem C
% ****************************************************************************
\begin{maintheorem}\label{C}
Suppose that $N\ge 3$ and $p>p_{\rm S}$.
There exists $\Theta^*\in (0,\pi)$ such that (\ref{EFODE}) has a singular solution $U^*(\theta)$ for $\Theta=\Theta^*$ such that $U^*(\theta)\in C^2(0,\Theta^*]$ and
\begin{equation}\label{CE0}
U^*(\theta)=a\left(\cos\frac{\theta}{2}\right)^{-(N-2)}\left(2\tan\frac{\theta}{2}\right)^{-\mu}(1+o(1))\quad\textrm{as}\quad \theta\downarrow 0,
\end{equation}
where
\begin{equation}\label{Anu}
a:=\left\{\mu(N-2-\mu)\right\}^{\mu/2}\ \ \textrm{and}\ \ \mu:=\frac{2}{p-1}.
\end{equation}
\end{maintheorem}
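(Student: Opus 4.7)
The plan is to pass to stereographic coordinates, reducing \eref{EFODE} to a perturbation of the Euclidean Emden-Fowler equation, then construct the singular solution via an Emden-Fowler dynamical systems argument, and finally show it must vanish at some $\Theta^* \in (0, \pi)$ via an integral identity. Concretely, set $r = 2\tan(\theta/2)$ and $V(r) = (\cos(\theta/2))^{N-2} U(\theta)$; the conformal identity $g_{\bbS^N} = (1+r^2/4)^{-2} g_{\text{Eucl}}$ combined with the standard intertwining of conformal Laplacians transforms \eref{EFODE} into
\[
V'' + \frac{N-1}{r} V' + g(r) V^p + h(r) V = 0, \qquad r > 0,
\]
with $g(r) = (1+r^2/4)^{[(N-2)p-(N+2)]/2}$ and $h(r) = \frac{N(N-2)}{4}(1+r^2/4)^{-2}$ smooth and positive, $g(0) = 1$. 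The conclusion \eref{CE0} is then equivalent to $V^*(r) = ar^{-\mu}(1+o(1))$ as $r \downarrow 0$.

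Next, apply the Emden-Fowler substitution $V(r) = r^{-\mu} W(t)$, $t = \log r$, producing
\[
W_{tt} + (N-2-2\mu) W_t - \mu(N-2-\mu) W + g(e^t) W^p + e^{2t} h(e^t) W = 0.
\]
Since $p > p_{\rm S}$ gives $\mu < (N-2)/2$, the autonomous limit $(t \to -\infty)$ admits the equilibrium $W \equiv a = \{\mu(N-2-\mu)\}^{1/(p-1)}$, and the characteristic roots of its linearization, $\lambda^2 + (N-2-2\mu)\lambda + (p-1)\mu(N-2-\mu) = 0$, both have strictly negative real parts. Writing $W = a + w$, the residual right-hand side is $O(e^{2t}) + O(w^2)$. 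Using the bounded forward Green's kernel of the linear constant-coefficient operator, recast the problem as a Volterra integral equation and apply the contraction mapping principle in $X_\gamma = \{w \in C((-\infty, T_0]) : \sup_{t \leq T_0} e^{-\gamma t}|w(t)| < \infty\}$ for small $\gamma > 0$ and $T_0$ sufficiently negative. This produces $w(t) = O(e^{\gamma t})$ at $-\infty$, hence a local $U^*$ satisfying \eref{CE0}.

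Continue $U^*$ by the ODE and let $(0, \Theta^*)$ be the maximal interval on which it is positive. Multiplication of \eref{EFODE} by $\sin^{N-1}\theta$ yields the divergence form $(\sin^{N-1}\theta\,(U^*)')' + \sin^{N-1}\theta\,(U^*)^p = 0$. Integrating over $(\epsilon, \pi-\delta)$ and using \eref{CE0} to obtain $\sin^{N-1}\epsilon\,(U^*)'(\epsilon) \sim -a\mu\,\epsilon^{N-2-\mu} \to 0$ (since $\mu < N-2$), if $U^* > 0$ on $(0, \pi)$ we would get $\lim_{\theta\to\pi^-} \sin^{N-1}\theta\,(U^*)'(\theta) = -\int_0^\pi \sin^{N-1}\theta\,(U^*)^p\,d\theta =: L < 0$. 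But then $(U^*)'(\theta) \sim L/(\pi-\theta)^{N-1}$ near $\pi$, which is non-integrable, forcing $U^*(\theta) \to -\infty$ as $\theta \to \pi^-$ and contradicting positivity. Hence $\Theta^* \in (0, \pi)$; and $U^* \in C^2((0, \Theta^*])$ by classical ODE theory, since \eref{EFODE} is regular at $\theta = \Theta^*$.

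The delicate step is the construction of $V^*$: in the autonomous limit the equilibrium $W = a$ is asymptotically stable forward in time, so only $W \equiv a$ itself satisfies $W \to a$ at $t = -\infty$ for the unperturbed equation; the nontrivial solution $W = a + w$ must be extracted purely from the $O(e^{2t})$ nonautonomous forcing produced by the spherical perturbation. The forcing's exponential decay rate dictates the admissible weight $e^{\gamma t}$ and pins down the leading constant $a$ in \eref{CE0}. Once $V^*$ is built, the remaining steps reduce to standard ODE and integral-identity computations.
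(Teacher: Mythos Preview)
Your proposal is correct and follows essentially the same route as the paper: stereographic projection plus conformal rescaling to a perturbed Euclidean Emden--Fowler equation, the Emden transformation $V=r^{-\mu}W(\log r)$ to an asymptotically autonomous ODE with equilibrium $W\equiv a$, a contraction-mapping construction of the unique solution with $W\to a$ at $t=-\infty$ (the paper reverses time $\tau=-t$ and writes the Green's-function integral explicitly, which is your ``bounded Green's kernel'' step), and finally the divergence identity $(\sin^{N-1}\theta\,U')'=-\sin^{N-1}\theta\,U^p$ to force a zero in $(0,\pi)$. The only differences are cosmetic normalizations (your $r=2\tan(\theta/2)$ versus the paper's $r=\tan(\theta/2)$) and that the paper bounds $U'$ by a finite constant $C(\theta_2)$ rather than passing to the possibly infinite limit $L$, but your argument goes through either way since $\int^\pi \sin^{-(N-1)}\varphi\,d\varphi=\infty$.
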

In the next theorem we obtain the behavior of the curve $\{(\Theta(\Gamma),\Gamma)\}$ for large $p$.
\begin{maintheorem}\label{ThD}
Suppose that $N\ge 3$.
Let $\underline{\Theta}$ be given in Corollary~\ref{B}~(i), and let $\Theta^*$ be given in Theorem~\ref{C}.
Then,
\[
\underline{\Theta}\to\pi\ \textrm{as}\ p\to\infty.
\]
Since $\underline{\Theta}\le\Theta^*$, it holds that $\Theta^*\to\pi$ as $p\to\infty$.
In particular, when $N=3$, $\underline{\Theta}\ge\pi-\arcsin\frac{4}{p-1}$ for $p\ge p_{\rm S}(=5)$.
\end{maintheorem}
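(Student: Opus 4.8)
The plan is this. By Corollary~\ref{B}(i) we have $\underline{\Theta}=\inf_{\Gamma>0}\Theta(\Gamma)$, so it is enough to produce a lower bound for $\Theta(\Gamma)$, uniform in $\Gamma$, tending to $\pi$ as $p\to\infty$ (and, for $N=3$, to keep track of the constant so as to land on $\pi-\arcsin\frac4{p-1}$). I would first remove the first order term from \eref{IVP}: with $V:=(\sin\theta)^{(N-1)/2}U$ it becomes
\[
V''+\big(Q_N(\theta)+U^{p-1}\big)V=0,\qquad Q_N(\theta):=\frac{N-1}{2}-\frac{(N-1)(N-3)}{4}\cot^2\theta,
\]
with $V>0$ on $(0,\Theta(\Gamma))$ and $V(0)=V(\Theta(\Gamma))=0$; here $Q_3\equiv1$, and $(\sin\theta)^{(N-1)/2}$ solves $\phi''+Q_N\phi=0$ and vanishes exactly at $0$ and $\pi$. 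Thus $\Theta(\Gamma)$ is precisely the first zero of $V$; it is always $<\pi$ by Sturm comparison (since $U^{p-1}>0$), and the whole issue is to show that the perturbing potential $U^{p-1}$ pulls this first zero back from $\pi$ by at most $\e_p\to0$.

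The heart of the argument is a $\Gamma$-independent pointwise bound on $U^{p-1}$. Near the north pole one rescales: on the scale $\rho=\Gamma^{(p-1)/2}\tan(\theta/2)$ the ODE in \eref{IVP} is, up to an $O(\Gamma^{-(p-1)})$ error, the Euclidean Emden--Fowler equation with initial height $1$, whose ground state $\bar U$ obeys $\bar U(\rho)\le C_p\,a\,\rho^{-\mu}$ with $a,\mu$ as in \eref{Anu} and $C_p$ bounded (one may take $C_p=1$ once $p\ge p_{\rm JL}$; otherwise the overshoot of $\bar U$ over the singular solution $ar^{-\mu}$ is controlled, and it is enough to combine this with the monotonicity of $U$ on $(0,\Theta(\Gamma))$). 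Unwinding the rescaling and using Theorem~\ref{C}, one obtains, uniformly in $\Gamma$,
\[
U(\theta)^{p-1}\le C_p^{\,p-1}\,\mu(N-2-\mu)\Big(\cos\tfrac\theta2\Big)^{-(N-2)(p-1)}\Big(2\tan\tfrac\theta2\Big)^{-2}\big(1+o(1)\big).
\]
Since $\mu(N-2-\mu)=\frac{2}{p-1}\big(N-2-\frac{2}{p-1}\big)\to0$ as $p\to\infty$, this extra potential is $o_p(1)$ on every compact subset of $(0,\pi)$; near $\theta=0$ it is $\lesssim\mu(N-2-\mu)\,\theta^{-2}$, which together with the $-\frac{(N-1)(N-3)}{4}\cot^2\theta$ part of $Q_N$ gives an Euler-type operator with real indicial exponents, so $V$ cannot oscillate near the pole (consistent with $V\sim\Gamma\,\theta^{(N-1)/2}$ there for the regular solution).

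With the potential so controlled, a Sturm comparison of $V''+(Q_N+U^{p-1})V=0$ against an explicitly integrable model finishes the proof. For general $N$ one perturbs off $\phi''+Q_N\phi=0$: adding an $o_p(1)$ potential moves its first zero from $\pi$ to $\pi-\e_p$ with $\e_p\to0$, so $\Theta(\Gamma)\ge\pi-\e_p$ for every $\Gamma$, hence $\underline{\Theta}\to\pi$. For $N=3$, where $Q_3\equiv1$, one keeps the constants and compares against the model whose relevant positive solution vanishes at $\pi-\arcsin\frac4{p-1}$ — for instance the one built from $\theta\mapsto\sin\theta-\frac4{p-1}$ via the identity $V''+V=-U^{p-1}V$ together with the sharp form of the above bound (with $C_p$ replaced by $1$) — obtaining $\Theta(\Gamma)\ge\pi-\arcsin\frac4{p-1}$ for every $\Gamma>0$ and every $p\ge p_{\rm S}=5$; the endpoint value $\pi/2$ at $p=5$ recovers the nonexistence threshold of Bandle--Peletier~\cite{BP99}. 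Finally $\underline{\Theta}\le\Theta^*$ (from Corollary~\ref{B}(i) together with $\Theta(\Gamma)\to\Theta^*$, Theorem~\ref{A}(iii)) and $\Theta^*<\pi$ (Theorem~\ref{A}(iii)), so $\pi-\e_p\le\underline{\Theta}\le\Theta^*<\pi$ forces $\Theta^*\to\pi$ as well.

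I expect the main obstacle to be the $\Gamma$-uniform envelope bound of the second step — that a regular solution cannot rise substantially above the singular solution at intermediate $\theta$. This needs a careful matched-asymptotics passage through the boundary layer $\theta\sim\Gamma^{-(p-1)/2}$ together with control of the oscillations of the Euclidean Emden--Fowler flow about its singular solution, and the difficulty is sharpest when $p$ is close to $p_{\rm S}$ and in low dimensions ($N\le10$, where $p_{\rm JL}=\infty$). Pinning down the sharp constant $\arcsin\frac4{p-1}$ for $N=3$ — equivalently, making precise the link with the conformal Pohozaev--Rellich identity behind~\cite{BP99} — is the most delicate point.
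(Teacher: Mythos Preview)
Your route is genuinely different from the paper's, and it has a real gap at exactly the place you flag. The paper does \emph{not} try to control $U^{p-1}$ at all; instead it uses a Poho\v{z}aev-type monotonicity functional
\[
H(\theta)=-U'(\theta)^2\sin^{2N-2}\theta\int_{\theta}^{\Theta}\frac{d\varphi}{\sin^{N-1}\varphi}-U(\theta)U'(\theta)\sin^{N-1}\theta-\frac{2}{p+1}U(\theta)^{p+1}\sin^{2N-2}\theta\int_{\theta}^{\Theta}\frac{d\varphi}{\sin^{N-1}\varphi},
\]
checks $H(0)=H(\Theta)=0$, and computes $H'(\theta)=\frac{4N-4}{p+1}U^{p+1}\sin^{N-1}\theta\bigl(\frac{p+3}{4N-4}-F(\theta)\bigr)$ with $F(\theta)=\cos\theta\,\sin^{N-2}\theta\int_\theta^\Theta\sin^{1-N}\varphi\,d\varphi$. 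Since $F$ is bounded on $(0,\Theta_0]$ independently of the solution, once $p$ is large $H'>0$ on $(0,\Theta_0)$, contradicting $H(0)=H(\Theta_0)=0$; for $N=3$ the integral is elementary, $F(\theta)=\tfrac12-\tfrac{\sin(2\theta-\Theta)}{2\sin\Theta}$, and the inequality $\frac{p-1}{8}>\frac{1}{2\sin\Theta}$ gives exactly $\underline{\Theta}\ge\pi-\arcsin\frac{4}{p-1}$. No envelope bound on $U$ is ever needed.

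Your scheme, by contrast, hinges on the claim that $U(\theta)^{p-1}=o_p(1)$ on compact subsets of $(0,\pi)$, uniformly in $\Gamma$. The displayed inequality you derive does not give this: the factor $\bigl(\cos\tfrac{\theta}{2}\bigr)^{-(N-2)(p-1)}$ grows \emph{exponentially} in $p$ for every fixed $\theta>0$ and overwhelms the decay $\mu(N-2-\mu)=O(1/p)$ (and $C_p^{\,p-1}$ could be large as well). More concretely, for any $\Gamma>1$ the monotone profile $U(\cdot,\Gamma)$ passes through the value $1$ at some interior point, where $U^{p-1}=1$; so the potential cannot be uniformly $o_p(1)$ on compacts. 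This breaks the Sturm comparison step as stated, and the $N=3$ ``model solution $\sin\theta-\frac{4}{p-1}$'' does not solve any comparison equation you have set up, so the sharp constant is not recovered. If you want to salvage the comparison idea you would need a quite different, $\Gamma$-dependent partition of $(0,\Theta(\Gamma))$ together with a non-oscillation criterion in each piece; the Poho\v{z}aev route bypasses all of this.
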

In Theorems~\ref{A} and \ref{ThD} detailed properties of $\Theta(\Gamma)$ in the case $p\ge p_{\rm JL}$ are not clarified.
\begin{conjecture}\label{Conj1}
Suppose that $N\ge 11$.
If $p\ge p_{\rm JL}$, then $\Theta(\Gamma)$ is strictly decreasing and (\ref{EFODE}) has no regular solution for $\Theta\in(0,\Theta^*]$.
\end{conjecture}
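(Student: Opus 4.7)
The plan is to extend the classical Joseph-Lundgren phase-plane analysis from the Euclidean Emden-Fowler equation to (\ref{EFODE}) via an Emden-Fowler substitution that reduces the equation to a non-autonomous perturbation of the autonomous Euclidean ODE, in which the singular solution of Theorem~\ref{C} corresponds to a constant trajectory. Introducing $t = \log\tan(\theta/2) \in (-\infty, t_\Theta)$ with $t_\Theta = \log\tan(\Theta/2)$ and the rescaled unknown
\[
V(t) := 2^{\,\mu-(N-2)/2}\,e^{(\mu-(N-2)/2)t}(\cosh t)^{-(N-2)/2}\,U(\theta(t)),
\]
a direct calculation gives
\[
V'' + (N-2-2\mu)V' + \left[-\mu(N-2-\mu) + \tfrac{N(N-2)}{4}\operatorname{sech}^2 t\right]V + (1+e^{2t})^{\kappa}V^p = 0,
\]
with $\kappa = (N-2)(p-p_{\rm S})/2$. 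As $t\to-\infty$ the factors $\operatorname{sech}^2 t$ and $(1+e^{2t})^\kappa-1$ vanish exponentially, so the equation degenerates to the autonomous Euclidean Emden-Fowler ODE, for which $V\equiv a$ is the unique positive equilibrium. Regular solutions $U_\Gamma$ correspond to orbits leaving the saddle $(V,V')=(0,0)$ along its one-dimensional unstable manifold with $V_\Gamma(t)\sim\Gamma\,2^\mu e^{\mu t}$ as $t\to-\infty$, and the singular solution of Theorem~\ref{C} corresponds to the orbit asymptotic to $(a,0)$.

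The hypothesis $p \ge p_{\rm JL}$ is equivalent to the non-negativity of the discriminant $(N-2)^2 - 4p\mu(N-2-\mu)$ of the linearisation of the autonomous limit at $V\equiv a$, so that both characteristic roots are real and negative and orbits of the autonomous model approach $(a,0)$ monotonically rather than spiralling. I would then transfer this non-oscillation to the non-autonomous equation by a Sturm-comparison argument on the linear ODE
\[
W'' + (N-2-2\mu)W' + \left[-\mu(N-2-\mu) + \tfrac{N(N-2)}{4}\operatorname{sech}^2 t + p(1+e^{2t})^\kappa \xi(t)^{p-1}\right]W = 0
\]
satisfied by $W := V_{\Gamma_1} - V_{\Gamma_2}$, where $\xi(t)$ lies between $V_{\Gamma_1}(t)$ and $V_{\Gamma_2}(t)$. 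Eliminating the first-order term via $\tilde W := e^{(N-2-2\mu)t/2}W$ reduces this to $\tilde W'' + \tilde c(t)\tilde W = 0$ with $\tilde c(t) = -(N-2)^2/4 + (N(N-2)/4)\operatorname{sech}^2 t + p(1+e^{2t})^\kappa \xi^{p-1}$; at the frozen value $\xi\equiv a$, the Euclidean benchmark $-(N-2)^2/4 + pa^{p-1}\le 0$ is precisely the Joseph-Lundgren inequality and the equation is non-oscillatory. Since $W(t)\sim(\Gamma_1-\Gamma_2)\,2^\mu e^{\mu t}$ has a definite sign as $t\to-\infty$, a bound of one on the zero-count of $\tilde W$ yields $V_{\Gamma_2}(t_{\Theta(\Gamma_1)}) < V_{\Gamma_1}(t_{\Theta(\Gamma_1)})=0$, hence $\Theta(\Gamma_2)<\Theta(\Gamma_1)$, i.e.\ strict monotonicity. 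Combined with $\Theta(\Gamma)\to\Theta^*$ from Theorem~\ref{A}(iii), this forces $\Theta(\Gamma)>\Theta^*$ for every $\Gamma>0$, so no regular solution exists for $\Theta\in(0,\Theta^*]$.

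The chief obstacle is the zero-count estimate in the non-autonomous setting. The perturbation $(N(N-2)/4)\operatorname{sech}^2 t + [(1+e^{2t})^\kappa-1]p\xi^{p-1}$ is positive once $\xi>0$ and has no obvious dominating sign, so it can in principle introduce extra zeros absent in the Euclidean baseline. The natural remedy is to construct a quadratic Lyapunov functional $L(W,W')=\alpha(t)(W')^2+\beta(t)WW'+\gamma(t)W^2$ whose $t$-derivative along $W$ is sign-definite, the coefficients being determined by the discriminant inequality $4p\mu(N-2-\mu)\le(N-2)^2$ together with the explicit signs and monotonicity of $\operatorname{sech}^2 t$ and $(1+e^{2t})^\kappa$; alternatively, one may try to patch a Euclidean-style Sturm argument on an inner region $t\ll 0$ (where the perturbation is negligible and $\xi$ is small, so $\tilde c(t)\approx -(N-2)^2/4$) with an independent boundary-layer argument near $t_\Theta$ using $V\downarrow 0$. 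Making either approach uniform on the whole interval $(-\infty, t_\Theta)$ is the technical core of the problem and the reason the statement is left as a conjecture.
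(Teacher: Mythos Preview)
The statement you are attempting is labelled \emph{Conjecture} in the paper, and the paper offers no proof of it; the authors explicitly write that ``detailed properties of $\Theta(\Gamma)$ in the case $p\ge p_{\rm JL}$ are not clarified'' and leave the monotonicity of $\Theta(\Gamma)$ open. So there is no paper proof against which to compare your proposal.

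Your change of variables is essentially the one the paper itself uses in Section~4 (equation~(4.1)): with $y(t)=2^{-q/(p-1)}u(r)/\bar u^*(r)$ and $t=\tfrac1m\log r$ the equation becomes $y''+\alpha y'-y+y^p+B_0(t)y^p+B_1(t)y=0$, where $B_0,B_1>0$ are exactly the perturbations you write down. Your identification of $p\ge p_{\rm JL}$ with the non-negativity of the discriminant at the equilibrium $(a,0)$ is also correct and mirrors the paper's discussion in Section~2. So the framework you set up is right and coincides with the paper's.

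The genuine gap is precisely the one you name in your last paragraph, and it has a second layer you do not make explicit. Even in the Euclidean baseline, the Sturm argument requires the \emph{ordering} $\bar u(\rho,\bar\gamma)<\bar u^*(\rho)$ (equivalently $V_\Gamma<a$) so that $p\xi^{p-1}\le pa^{p-1}$ and the frozen coefficient stays non-positive. On $\R^N$ this ordering is itself a consequence of $p\ge p_{\rm JL}$, but on the spherical cap it is not established anywhere in the paper and would have to be proved first; the positive perturbations $B_0,B_1$ in (4.1) make the orbit $(y,z)$ no longer trapped in the sublevel set $\{J<0\}$ used in Section~2, so the Euclidean argument does not transfer. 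On top of that, even granting $V_\Gamma\le a$, the extra terms $\tfrac{N(N-2)}{4}\operatorname{sech}^2 t$ and $[(1+e^{2t})^\kappa-1]p\xi^{p-1}$ are strictly positive and of order $O(1)$ once $t$ is bounded, so $\tilde c(t)$ can exceed zero on a set of positive measure regardless of the Joseph--Lundgren inequality; a Lyapunov functional of the form you suggest would have to absorb this excess uniformly in $t$ and in the unknown profile $\xi$, and no such functional is known. Your proposal is therefore an honest and well-informed outline of the natural line of attack, but it does not close the problem, which is consistent with the paper's decision to state it as a conjecture.
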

\noindent
Figure~\ref{FIG} (b) shows a conjectured bifurcation diagram in the case $p\ge p_{\rm JL}$.

%Next, we study the dependence of the branch $\{(\Theta(\Gamma),\Gamma)\}$ on $p$.
Next, we consider the critical case $p=p_{\rm S}$ and subcritical case $1<p<p_{\rm S}$.
%We recall known results to state Theorem~\ref{ThE} below.
The following proposition follows from combining known results~\cite{BBF98,BP99,SW13} and our results.
%follows from combining known results.
\begin{proposition}[Critical/Subcritical]\label{CriSub}
Suppose that $N\ge 3$ and $1<p\le p_{\rm S}$.
Let $\Theta(\Gamma)$ be the first positive zero of the solution of (\ref{IVP}).\\
(i) The function $\Theta(\Gamma)$ is of class $C^1$. For each $\Gamma>0$, $0<\Theta(\Gamma)<\pi$.\\
(ii) $\Theta(\Gamma)\rightarrow\pi$ as $\Gamma\downarrow 0$.\\
(iii) $\Theta(\Gamma)$ is strictly decreasing.\\
(iv) If $N\ge 4$, then $\Theta(\Gamma)\to 0$ as $\Gamma\to\infty$.\\
(v) If $N=3$ and $p=p_{\rm S}(=5)$, then $\Theta(\Gamma)\to\frac{\pi}{2}$ as $\Gamma\to\infty$. On the other hand, if $N=3$ and $1<p<p_{\rm S}$, then $\Theta(\Gamma)\to 0$ as $\Gamma\to\infty$.
In particular, if $N=3$ and $p=p_{\rm S}(=5)$, (\ref{EFODE}) has no regular solution for $\Theta\in(0,\frac{\pi}{2}]$.
\end{proposition}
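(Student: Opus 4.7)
The plan is to verify each item in turn, invoking the cited literature for the deeper facts and standard ODE techniques otherwise. Item~(i) is routine: Lemma~\ref{S3L2} produces a first positive zero $\Theta(\Gamma)\in(0,\pi)$, and since $U(\,\cdot\,;\Gamma)$ is strictly decreasing on $[0,\Theta(\Gamma)]$ we have $U'(\Theta(\Gamma))<0$; the implicit function theorem applied to $U(\Theta;\Gamma)=0$ then delivers $\Theta\in C^{1}$. For item~(ii) I would rescale $V_{\Gamma}(\theta):=\Gamma^{-1}U(\theta;\Gamma)$, which satisfies the same ODE with nonlinearity $\Gamma^{p-1}|V_{\Gamma}|^{p-1}V_{\Gamma}$ and data $V_{\Gamma}(0)=1$, $V_{\Gamma}'(0)=0$. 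As $\Gamma\downarrow 0$, $V_{\Gamma}$ converges uniformly on compact subsets of $[0,\pi)$ to the constant function $1$, which never vanishes, so $\Theta(\Gamma)\to\pi$.

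For item~(iii), strict monotonicity, I would differentiate the defining identity $U(\Theta(\Gamma);\Gamma)=0$ in $\Gamma$ to obtain
\[
\Theta'(\Gamma)=-\frac{\partial_{\Gamma}U(\Theta(\Gamma);\Gamma)}{U'(\Theta(\Gamma))},
\]
which, together with $U'(\Theta(\Gamma))<0$, reduces the problem to a sign statement for the linearization $W:=\partial_{\Gamma}U$ (solving a linear Sturm-type ODE with $W(0)=1$, $W'(0)=0$). The subcritical/critical hypothesis $p\le p_{\rm S}$ is precisely what prevents $W$ from changing sign on $(0,\Theta(\Gamma)]$, as can be extracted from the Pohozaev-type computations on the spherical cap developed in \cite{BBF98,SW13}; the conclusion is $\Theta'(\Gamma)<0$ throughout.

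For item~(iv) and the subcritical half of item~(v), I would use the concentration rescaling $\tilde U(r):=\Gamma^{-1}U(\Gamma^{-(p-1)/2}r)$. Since $\cot\theta\sim 1/\theta$ as $\theta\downarrow 0$, the rescaled equation converges near the pole to the Euclidean Emden-Fowler equation
\[
\tilde U''+\frac{N-1}{r}\tilde U'+\tilde U^{p}=0,\qquad \tilde U(0)=1,\ \tilde U'(0)=0,
\]
whose regular positive solution has a first zero at some finite $R_{*}>0$ when $1<p<p_{\rm S}$ (by \cite{GNN79}), giving $\Theta(\Gamma)\sim R_{*}\Gamma^{-(p-1)/2}\to 0$. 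In the critical case of~(iv) with $N\ge 4$, a further rescaling produces an Aubin-Talenti profile and arguments as in \cite{SW13} force $\Theta(\Gamma)\to 0$ by concentration. The distinguished case $N=3$, $p=p_{\rm S}=5$ in~(v), with limit $\pi/2$, is precisely the main result of \cite{BP99}.

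I expect the main obstacle to be item~(iii) at criticality: although the Euclidean Pohozaev argument forbids regular solutions on balls at $p=p_{\rm S}$, they persist on the spherical cap, so the needed sign of $W$ must be extracted from the refined spherical Pohozaev identity of \cite{BBF98,SW13} rather than its Euclidean counterpart.
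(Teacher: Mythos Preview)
Your treatment of (i), (ii), the strictly subcritical parts of (iv)--(v), and the $N=3$, $p=p_{\rm S}$ case of (v) is fine and parallels the paper's own use of Lemmas~\ref{S3L2}, \ref{S3L3}, \ref{S6L1} together with \cite{BP99}.

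There is, however, a genuine error in your argument for (iii). From your formula $\Theta'(\Gamma)=-W(\Theta(\Gamma))/U'(\Theta(\Gamma))$ with $U'(\Theta(\Gamma))<0$, the conclusion $\Theta'(\Gamma)<0$ is equivalent to $W(\Theta(\Gamma))<0$. Since $W(0)=1>0$, the function $W$ \emph{must} change sign on $(0,\Theta(\Gamma))$; your claim that ``$p\le p_{\rm S}$ is precisely what prevents $W$ from changing sign'' is therefore the opposite of what you need, and indeed Lemma~\ref{S6L2} already exhibits (for small $\Gamma$ and any $p>1$) that $W$ has exactly one interior zero. What one would actually have to prove is that $W$ changes sign exactly once and $W(\Theta(\Gamma))<0$ for \emph{every} $\Gamma>0$, and it is not clear how the Poho\v{z}aev-type identities in \cite{BBF98,SW13} yield this. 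The paper sidesteps the linearization entirely: it invokes the uniqueness theorem of Shioji--Watanabe \cite[Theorem~5]{SW13} (at most one regular solution when $1<p\le p_{\rm S}$) and then observes that a continuous $\Theta(\Gamma)$ with $\Theta(\Gamma)\to\pi$ as $\Gamma\downarrow 0$ that fails to be strictly decreasing would assign the same $\Theta$ to two different $\Gamma$'s, contradicting uniqueness.

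A secondary gap is the critical case $p=p_{\rm S}$, $N\ge 4$ in (iv). Your blow-up limit there is the Aubin--Talenti profile, which is strictly positive on all of $\R^{N}$, so no finite first zero emerges from the rescaling, and the phrase ``arguments as in \cite{SW13} force $\Theta(\Gamma)\to 0$ by concentration'' does not identify a mechanism. The paper instead combines (iii) with the existence result of Bandle--Brillard--Flucher \cite[Section~7.4]{BBF98} (a regular solution exists for every $\Theta\in(0,\pi)$): since $\Theta(\Gamma)$ is strictly decreasing and must hit every value in $(0,\pi)$, its limit as $\Gamma\to\infty$ is forced to be $0$.
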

\noindent
See Figure~\ref{FIG} (c) and (d).

When $1<p<p_{\rm S}$, for each fixed $\Theta_0\in(0,\pi)$, there is a unique $\Gamma_0>0$ depending on $p$ such that $\Theta(\Gamma_0)=\Theta_0$.
Therefore, we write $\Gamma_0$ by $\Gamma(p)$.
The asymptotic shape of the branch as $p\downarrow 1$ is as follows:
\begin{maintheorem}\label{ThE}
Suppose that $N\ge 3$.
There exists $\Theta^{\dagger}\in (0,\pi)$ such that the following statements hold:\\
(i) If $0<\Theta<\Theta^{\dagger}$, then $\Gamma(p)\to\infty$ as $p\downarrow 1$.\\
(ii) If $\Theta=\Theta^{\dagger}$, then $\Gamma(p)\to\Gamma^{\dagger}$ as $p\downarrow 1$ with some constant $\Gamma^{\dagger}>0$.\\
(iii) If $\Theta^{\dagger}<\Theta<\pi$, then $\Gamma(p)\to 0$ as $p\downarrow 1$.
\end{maintheorem}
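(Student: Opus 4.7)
The plan is to rescale so that the initial value equals one and to pass to a linear limit as $p\downarrow 1$. Setting $V(\theta):=U(\theta)/\Gamma$ reduces \eqref{IVP} to
\begin{equation*}
V''+(N-1)\frac{\cos\theta}{\sin\theta}V'+\Gamma^{p-1}V^{p}=0,\qquad V(0)=1,\ V'(0)=0,
\end{equation*}
whose first positive zero is still $\Theta(\Gamma)$. The essential parameter is $\lambda:=\Gamma^{p-1}$, and letting $p\downarrow 1$ with $\lambda$ fixed formally yields the linear radial eigenvalue problem on $\bbS^{N}$,
\begin{equation*}
W''+(N-1)\frac{\cos\theta}{\sin\theta}W'+\lambda W=0,\qquad W(0)=1,\ W'(0)=0,
\end{equation*}
with first positive zero $\Theta_{*}(\lambda)$. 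Using Sturm comparison, together with the observations that $W\equiv 1$ at $\lambda=0$ (so $\Theta_{*}(\lambda)\to\pi$ as $\lambda\downarrow 0$) and that solutions oscillate rapidly as $\lambda\to\infty$ (so $\Theta_{*}(\lambda)\to 0$), I would show that $\Theta_{*}:(0,\infty)\to(0,\pi)$ is a continuous, strictly decreasing bijection. Define $\Theta^{\dagger}:=\Theta_{*}(1)$.

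Continuous dependence of ODE solutions on the coefficients gives: for $p_{n}\downarrow 1$ and $\Gamma_{n}^{p_{n}-1}\to\lambda\in(0,\infty)$, the rescaled $V_{p_{n}}$ converge to $W_{\lambda}$ uniformly on compact subsets of $[0,\pi)$, and hence $\Theta(\Gamma_{n})\to\Theta_{*}(\lambda)$. Fix $\Theta_{0}\in(0,\pi)$, let $\Gamma(p)$ be defined by $\Theta(\Gamma(p))=\Theta_{0}$, and pass to any subsequence along which $\lambda_{n}:=\Gamma(p_{n})^{p_{n}-1}$ converges in $[0,\infty]$. The limits $0$ and $\infty$ force $\Theta_{0}=\pi$ or $\Theta_{0}=0$ respectively, which are excluded; hence $\lambda_{n}\to\lambda\in(0,\infty)$ with $\Theta_{*}(\lambda)=\Theta_{0}$, uniquely determining $\lambda$. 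If $\Theta_{0}<\Theta^{\dagger}$ then $\lambda>1$ and $\Gamma(p_{n})=\lambda_{n}^{1/(p_{n}-1)}\to\infty$; if $\Theta_{0}>\Theta^{\dagger}$ then $\lambda<1$ and $\Gamma(p_{n})\to 0$. This proves parts (i) and (iii).

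The main obstacle is part (ii): at $\Theta_{0}=\Theta^{\dagger}$ the above only forces $\Gamma(p)^{p-1}\to 1$, which is consistent with $\Gamma(p)$ tending to any element of $[0,\infty]$. To pin down $\Gamma^{\dagger}$ I would expand to next order in $\e:=p-1$, using $\Gamma^{\e}V^{1+\e}=V+\e V\ln(\Gamma V)+O(\e^{2})$ and writing $V_{p}=W_{1}+\e V_{1}+o(\e)$. The correction satisfies
\begin{equation*}
\calL V_{1}=-W_{1}\ln(\Gamma W_{1}),\qquad \calL\phi:=\phi''+(N-1)\frac{\cos\theta}{\sin\theta}\phi'+\phi,
\end{equation*}
with $V_{1}(0)=V_{1}'(0)=0$, and the constraint $\Theta(\Gamma(p))=\Theta^{\dagger}$ forces $V_{1}(\Theta^{\dagger})=0$ at first order. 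Multiplying by $W_{1}\sin^{N-1}\theta$, integrating over $(0,\Theta^{\dagger})$, and using $\calL W_{1}=0$ together with $W_{1}(\Theta^{\dagger})=0$ to dispose of the boundary contributions, one obtains the solvability condition
\begin{equation*}
\ln\Gamma^{\dagger}=-\frac{\int_{0}^{\Theta^{\dagger}}W_{1}(\theta)^{2}\ln W_{1}(\theta)\sin^{N-1}\theta\,d\theta}{\int_{0}^{\Theta^{\dagger}}W_{1}(\theta)^{2}\sin^{N-1}\theta\,d\theta},
\end{equation*}
which defines $\Gamma^{\dagger}$ uniquely. An implicit function argument applied to the rescaled map $(\e,\Gamma)\mapsto\e^{-1}(\Theta(\Gamma)-\Theta^{\dagger})$ near $(0,\Gamma^{\dagger})$, whose nondegeneracy follows from $W_{1}'(\Theta^{\dagger})\neq 0$ and the strict dependence of the first-order correction on $\ln\Gamma$, combined with the uniqueness of $\Gamma(p)$ from Proposition~\ref{CriSub}~(iii), then yields $\Gamma(p)\to\Gamma^{\dagger}$ as $p\downarrow 1$. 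The delicacy in this step is that the zeroth-order rescaling loses all information about $\ln\Gamma$, so the logarithmic scale must be extracted from the next term in the asymptotic expansion.
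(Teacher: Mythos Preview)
Your proposal is correct and follows the same overall strategy as the paper: rescale to expose the parameter $\lambda=\Gamma^{p-1}$, pass to the linear radial eigenvalue problem on $\bbS^{N}$, and use the strict monotonicity of the first eigenvalue to reduce (i)--(iii) to the trichotomy $\lambda>1$, $\lambda=1$, $\lambda<1$. Your formula for $\Gamma^{\dagger}$ coincides with the paper's.

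The one substantive difference is the normalization chosen, and it affects only part~(ii). You normalize $V(0)=1$ and carry $\lambda=\Gamma^{p-1}$ in the coefficient; this forces you, at $\Theta=\Theta^{\dagger}$, to go to the next order in $\e=p-1$ and run an implicit function argument to recover $\ln\Gamma$. The paper instead fixes the coefficient $\lambda_{1}$ and lets the initial value $W(0)=\Gamma_{1}(p)$ vary, so that when $\lambda_{1}=1$ one has $U=W$ and $\Gamma(p)=\Gamma_{1}(p)$ directly. The paper then obtains your solvability condition via the Green-type identity
\[
(Z'\phi-Z\phi')\sin^{N-1}\theta\Big|_{0}^{\Theta_{1}}=-\lambda_{1}(p-1)\int_{0}^{\Theta_{1}}\frac{|Z|^{p-1}-1}{p-1}\,Z\phi\,\sin^{N-1}\varphi\,d\varphi,
\]
which, upon setting $\Gamma=\Gamma_{1}(p)$ (so that both boundary terms vanish) and letting $p\downarrow 1$, forces the same integral relation $\int_{0}^{\Theta^{\dagger}}(\ln\Gamma+\ln\phi)\phi^{2}\sin^{N-1}=0$ that you derived by expansion. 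The paper's normalization thus avoids having to extract $\ln\Gamma$ from a higher-order term, and replaces your implicit function step by a short contradiction argument; conversely, your approach makes the leading-order picture (convergence $V_{p}\to W_{\lambda}$) more transparent and handles (i) and (iii) by a clean subsequence argument. Your use of Proposition~\ref{CriSub}~(iii) to identify the IFT branch with the globally unique $\Gamma(p)$ is exactly what closes the loop, and the nondegeneracy you invoke is genuine (one checks $V_{1}^{(0)}(\Theta^{\dagger})\neq 0$ by the same Green identity).
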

Since the solution structure changes at $p=p_{\rm S}$, it is natural to study the case where $p\downarrow p_{\rm S}$.
%Because of Proposition~\ref{CriSub} (iv) (v) and the continuity of the solution $U(\theta)$ of (\ref{IVP}) on $p$, it is clear that if $N\ge 4$ (resp. $N=3$), then, as $p\downarrow p_{\rm S}$, $\underline{\Theta}\to 0$ (resp. $\underline{\Theta}\to\frac{\pi}{2}$).
We are led to the following:
\begin{conjecture}
Let $\overline{\Theta}$ be given in Corollary~\ref{B} (iv), and let $\Theta^*$ be given in Theorem~\ref{C}.
If $N\ge 4$, then $\overline{\Theta}\to 0$ ($p\downarrow p_{\rm S}$) and $\Theta^*\to 0$ ($p\downarrow p_{\rm S}$).
If $N=3$, then $\overline{\Theta}\to\frac{\pi}{2}$ ($p\downarrow p_{\rm S}$) and $\Theta^*\to\frac{\pi}{2}$ ($p\downarrow p_{\rm S}$).
\end{conjecture}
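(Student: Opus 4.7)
The plan is to combine continuity of $\Theta^*(p)$ and $\overline{\Theta}(p)$ on $(p_{\rm S},\infty)$ with a passage-to-the-critical-limit using Proposition~\ref{CriSub}.

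First, I would establish that $\Theta^*(p)$ depends continuously on $p$. The natural tool is an Emden-Fowler-type change of variables adapted to (\ref{CE0}), such as $w(t):=(2\sin(\theta/2))^{\mu}(\cos(\theta/2))^{N-2}U(\theta)$ with $t=\log\tan(\theta/2)\in(-\infty,\infty)$, chosen so that the singular asymptotics become $w(t)\to a$ as $t\to-\infty$. In this variable (\ref{EFODE}) becomes a second-order ODE whose coefficients depend smoothly on $p$ and whose singular solution is characterized by a single boundary condition at $-\infty$. By standard continuous dependence on parameters for this selection problem, $U^*(\,\cdot\,;p)$ and its first zero $\Theta^*(p)$ depend continuously on $p\in(p_{\rm S},\infty)$.

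Second, I would identify $\lim_{p\downarrow p_{\rm S}}\Theta^*(p)$ by matching the supercritical picture to the critical one. Observe that $\mu\to (N-2)/2$ and $a\to ((N-2)/2)^{(N-2)/2}$ are finite and nonzero as $p\downarrow p_{\rm S}$, so the transformed equation has a well-defined limit at $p=p_{\rm S}$. By Theorem~\ref{A}(iii), $\Theta^*(p)=\lim_{\Gamma\to\infty}\Theta(\Gamma;p)$; one then shows $\Theta(\Gamma;p)$ is jointly continuous in $(\Gamma,p)\in(0,\infty)\times(1,\infty)$ and passes a diagonal argument to obtain $\lim_{p\downarrow p_{\rm S}}\Theta^*(p)=\lim_{\Gamma\to\infty}\Theta(\Gamma;p_{\rm S})$, which by Proposition~\ref{CriSub}(iv)--(v) equals $0$ for $N\ge 4$ and $\pi/2$ for $N=3$. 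A matching inequality in the other direction, based on the fact that the singular solution persists in the conformal limit and majorises/minorises the regular solutions appropriately, completes the identification.

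For $\overline{\Theta}(p)$, note that by Theorem~\ref{A}(iv), whenever $p\in(p_{\rm S},p_{\rm JL})$ the bifurcation curve oscillates around $\Theta^*(p)$, so all turning points of $\Theta(\Gamma;p)$ lie in an interval $[\Theta^*(p)-\eta(p),\Theta^*(p)+\eta(p)]$. Since $\overline{\Theta}(p)$ is (essentially) the supremum of these turning points, it suffices to show $\eta(p)\to 0$ as $p\downarrow p_{\rm S}$, which would squeeze $\overline{\Theta}(p)$ to the same limit as $\Theta^*(p)$. Such a bound should follow from a uniform-in-$p$ linearisation estimate around $U^*$ combined with the monotone critical-case diagram from Proposition~\ref{CriSub}(iii).

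The hard part will be controlling the oscillation amplitude $\eta(p)$ uniformly as $p\downarrow p_{\rm S}$. The linearised operator around $U^*$ degenerates at $p=p_{\rm S}$ because of the conformal invariance of the critical Emden-Fowler equation on $\bbS^N$, so the spiralling phase-portrait argument underlying Theorem~\ref{A}(iv) breaks down precisely in the regime of interest. A refined analysis of the centre/stable manifold at the singular solution, together with the rate at which the spectral gap closes as $p\downarrow p_{\rm S}$, seems necessary to rule out a persistent amplitude that would keep $\overline{\Theta}(p)$ bounded away from the conjectured limit. The distinguished role of $N=3$ will have to be traced to the Bandle--Peletier conformal geometry on $\bbS^3$, which selects the hemisphere threshold $\pi/2$ rather than $0$ in Proposition~\ref{CriSub}(v).
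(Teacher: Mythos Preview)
The statement you are attempting to prove is presented in the paper as an open \emph{conjecture}; the authors give no proof of it. Consequently there is no ``paper's own proof'' to compare your proposal against.

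Turning to the proposal itself, it is an outline rather than a proof, and you are candid about this in your final paragraph. Two points deserve emphasis. First, your diagonal argument for $\Theta^*$ hinges on interchanging the limits $p\downarrow p_{\rm S}$ and $\Gamma\to\infty$ in $\Theta(\Gamma;p)$. Joint continuity on compact sets of $(\Gamma,p)$ is not enough: the convergence $\Theta(\Gamma;p)\to\Theta^*(p)$ is manifestly non-uniform in $p$ near $p_{\rm S}$, since at $p=p_{\rm S}$ the limit is $0$ (or $\pi/2$ when $N=3$) while for $p>p_{\rm S}$ it is a positive interior point. A genuine diagonal extraction would need an \emph{a priori} bound ruling out that $\Theta^*(p)$ accumulates at some value strictly between $0$ and $\pi$; nothing in the paper supplies this. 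Second, your description of $\overline{\Theta}$ as ``essentially the supremum of the turning points'' is inaccurate. From the proof of Corollary~\ref{B}(iv), $\overline{\Theta}$ is governed by $\sup_{\Gamma\ge\Gamma_0}\Theta(\Gamma;p)$ for some small $\Gamma_0$, which involves the entire curve for intermediate $\Gamma$, not only the large-$\Gamma$ oscillations about $\Theta^*$. Even if the oscillation amplitude $\eta(p)$ vanishes, a single bump of the curve at moderate $\Gamma$ could keep $\overline{\Theta}(p)$ bounded away from the conjectured limit; your squeeze argument does not address this.

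You correctly identify the essential obstruction: the linearisation about the singular solution degenerates at $p=p_{\rm S}$ (the eigenvalues of the planar system at $(1,0)$ satisfy $\lambda^2+\alpha\lambda+(p-1)=0$ with $\alpha\to 0$), so the spiral picture of Theorem~\ref{A}(iv) loses all quantitative control. Resolving the conjecture would likely require a uniform-in-$p$ analysis of the convergence in Lemma~\ref{S5L2} together with a global monotonicity argument for $\Theta(\Gamma;p)$ at intermediate $\Gamma$, neither of which is available in the paper.
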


Let us explain technical details.
Using the stereographic projection $v(r):=U(\theta)$ and $r:=\tan\frac{\theta}{2}$, we have
\[
v''+\frac{N-1}{r}v'-(N-2)rA(r)v'+A(r)^2v^p=0,
\]
where
\[
A(r):=\frac{2}{1+r^2}.
\]
%In order to remove the derivative appearing in the third term 
We let $u(r):=A(r)^{\frac{N-2}{2}}v(r)$.
Then, we have the semilinear elliptic problem
\begin{equation}\label{D}
\begin{cases}
u''+\frac{N-1}{r}u'+\frac{N(N-2)}{4}A(r)^2u+\frac{1}{A(r)^q}u^{p}=0, & 0<r<R,\\
u(R)=0, & \\
u>0, & 0\le r<R,
\end{cases}
\end{equation}
where
\[
R:=\tan\frac{\Theta}{2}\ \ \textrm{and}\ \ 
q:=\frac{N-2}{2}(p-p_{\rm S}).
\]
Note that if $R=1$, then $S_{\Theta}$ is a hemisphere ($\Theta=\frac{\pi}{2}$).
The problem (\ref{IVP}) is equivalent to the problem
\begin{equation}\label{DD}
\begin{cases}
u''+\frac{N-1}{r}u'+\frac{N(N-2)}{4}A(r)^2u+\frac{1}{A(r)^q}|u|^{p-1}u=0, & 0<r<\infty,\\
u(0)=\gamma>0,\ u'(0)=0,
\end{cases}
\end{equation}
where $\gamma:=2^{\frac{N-2}{2}}\Gamma$.
By $R(\gamma)$ we denote the first positive zero of the solution $u(\,\cdot\,,\gamma)$ of (\ref{DD}), i.e., $R(\gamma)=\tan\frac{\Theta(\Gamma)}{2}$.
In this paper we mainly consider (\ref{DD}).

%First, we prove a global parametrization result, using the implicit function theorem.
%Specifically, we show that $\Theta(\Gamma)$ is a $C^1$-function defined on $0<\Gamma<\infty$.
%Next, we construct a singular solution of (\ref{D}) $u^*(r)$ near the origin whose asymptotic expansion is given by (\ref{CE0}) in the original variables, using the method of Merle-Peletier~\cite[Theorem~1.1]{MP91}.
%We see that $u^*(r)$ has the first positive zero $R^*$ if the domain of $u^*(r)$ is extended to $(0,\infty)$.
%Then, $u^*(r)$ is the positive singular solution on $(0,R^*]$.
%We modify the method used in the proof of \cite[Theorem~1.2]{MP91} in order to prove the convergence $\Theta(\Gamma)\to\Theta^*$.
%Let $\calZ_J[f(\,\cdot\,)]$ denote the number of the zeros of the function $f(\,\cdot\,)$ on the interval $J\subset\R$, i.e.,
%\[
%\calZ_J[f(\,\cdot\,)]:=\sharp\{x\in J;\ f(x)=0\}.
%\]
%In the study of the oscillation of the curve $\{(\Theta(\Gamma),\Gamma)\}$ we use the intersection number between the regular solution $u(r,\gamma)$ and the singular solution $u^*(r)$ on the interval $I(\gamma):=(0,\min\{R(\gamma),R^*\})$, which is denoted by
%\[
%\calZ_{I(\gamma)}[u(\,\cdot\,,\gamma)-u^*(\,\cdot\,)],
%\]
%where $u(r,\gamma)$ is the solution of (\ref{DD}).
%By blow-up argument we show that $\calZ_{I(\gamma)}[u(\,\cdot\,,\gamma)-u^*(\,\cdot\,)]\to\infty$ as $\gamma\to\infty$.
%This divergence leads to Theorem~\ref{A} (iv).
The existence of infinitely many turning points for semilinear elliptic equations on a Euclidean ball was proved by the several authors.
In \cite{DF07,GW11} the Brezis-Nirenberg problem including a supercritical exponent was studied.
Dolbeault-Flores~\cite{DF07} used the geometric theory of dynamical systems.
Guo-Wei~\cite{GW11} used the Morse indices of solutions, using the intersection number between the regular and singular solutions.
See \cite{KW18,Mi14a,Mi14b,Mi18} for other results.
In \cite{D00,D08a,D08b,D13} Dancer studied infinitely many turning points of supercritical semilinear Dirichlet problems on a rather general domain, using the analytic property.
We show that (\ref{EFS}) has a singular solution $U^*$.
Using the intersection number of the singular solution $U^*(R)$ and a regular solution $U(R,\Gamma)$ of (\ref{IVP}) in the interval $I(\gamma)$
\[
\calZ_{I(\gamma)}[U^*(\,\cdot\,)-U(\,\cdot\,,\Gamma)],
\]
we prove the existence of infinitely many turning points as $\Gamma\to\infty$.
Here, $I(\gamma):=(0,\min\{R(\gamma),R^*\})$, $R(\gamma)$ and $R^*$ are the first positive zeros of $U$ and $U^*$, repsectively.

%However, the existence of infinitely many positive solutions was not proved in \cite{D00,D08a,D08b,D13}.
%Combining a detailed analysis of the intersection number and techniques developed in \cite{GW11,MP91}, the second author of the present paper recently proved the existence of the infinitely many turning points and positive solutions of a rather general equation on a ball in \cite{Mi14a,Mi14b,Mi14c}.
%In particular, this method is applicable to a Neumann problem on a ball. See \cite{Mi14c}.
%The method used in the proof of Theorem~\ref{A} (iv) was developed in \cite{Mi14a}.
%Theorem~\ref{A}~(ii), which indicates that the small solution is nondegenerate, is proved by the Sturm-Liouville comparison theorem.
%Specifically, we prove that $u_{\gamma}(r,\gamma)$ has exactly one zero on $[0,R(\gamma))$ and that $u_{\gamma}(R(\gamma),\gamma)<0$.
%Combining Theorem~\ref{A} (ii) and (iii), we show that the solution is unique for $\Theta\in (\overline{\Theta},\pi)$.
%One can show that for each $\Theta\in(0,\pi)$ the solution does not exist for large $p>1$, using the Poho\v{z}aev identity (\ref{S8P1E-1}).
%Consequently, Theorem~\ref{ThD} holds.
%In the proof of Theorem~\ref{ThE} we follow the idea of Yanagida-Yotsutani~\cite{YY96}.
%Using the scaled equation (\ref{S8E2}) below, one can analyze the asymptotic behavior of $\Gamma(p)$ as $p\downarrow 1$.

This paper consists of eight sections.
In Section~2 we recall known results about the Emden-Fowler equation on $\R^N$.
In Section~3 we prove Theorem~\ref{A} (i).
In Section~4 we construct the singular solution (Theorem~\ref{C}).
In Sections 5, 6, and 7 we prove Theorem~\ref{A} (iii), (ii), and (iv), respectively.
The proof of Corollary~\ref{B} is in Section~7.
In Section~8 we prove Theorems~\ref{ThD} and \ref{ThE}.
Proposition~\ref{CriSub} is also proved in Section~8.
%\\*************************************************************************\\

% **********************************************************
% **********************************************************
% **********************************************************
% Section 2
% **********************************************************
% **********************************************************
% **********************************************************
\section{Known results}
We recall known results about solutions of the equation
\[
\bar{u}''+\frac{N-1}{\rho}\bar{u}'+\bar{u}^p=0,\quad 0<\rho<\infty.
\]
See \cite{JL73,W93} for details.
This problem has the singular solution
\begin{equation}\label{S2E0}
\bar{u}^*(\rho):=a\rho^{-\mu},
\end{equation}
where $a$ and $\mu$ are defined by (\ref{Anu}).
Let $\bar{u}(\rho,\bar{\gamma})$ be the solution of
\begin{equation}\label{S2E1}
\begin{cases}
\bar{u}''+\frac{N-1}{\rho}\bar{u}'+\bar{u}^p=0, & 0<\rho<\infty,\\
\bar{u}(0)=\bar{\gamma}>0,\ \bar{u}'(0)=0.\\
\end{cases}
\end{equation}
We use Emden's transformation
\[
\bar{y}(t):=\frac{\bar{u}(\rho,\bar{\gamma})}{\bar{u}^*(\rho)}\ \ \textrm{and}\ \ t:=\frac{1}{m}\log\rho,
\]
where
\begin{equation}\label{m}
m:=a^{-\frac{p-1}{2}}.
\end{equation}
Then $\bar{y}(t)$ satisfies
\begin{equation}\label{S2E2}
\begin{cases}
\bar{y}''+\alpha\bar{y}-\bar{y}+\bar{y}^p=0,& -\infty<t<\infty,\\
ae^{-m\mu t}\bar{y}(t)\rightarrow\bar{\gamma}\ \textrm{as}\ t\rightarrow-\infty,\\
e^{-mt}(e^{-m\mu t}\bar{y}(t))'\rightarrow 0\ \textrm{as}\ t\rightarrow-\infty,
\end{cases}
\end{equation}
where
\begin{equation}\label{alpha}
\alpha:=m(N-2-2\mu).
\end{equation}
Let $\bar{z}(t):=\bar{y}'(t)$.
Then, $(\bar{y},\bar{z})$ satisfies
\begin{equation}\label{S2E3}
\begin{cases}
\bar{y}'=\bar{z}\\
\bar{z}'=-\alpha\bar{z}+\bar{y}-\bar{y}^p.
\end{cases}
\end{equation}
We study the orbit $(\bar{y}(t),\bar{z}(t))$.
Let
\[
J(\bar{y},\bar{z}):=\frac{\bar{z}^2}{2}-\frac{\bar{y}^2}{2}+\frac{\bar{y}^{p+1}}{p+1}.
\]
By direct calculation we have
\[
\frac{d}{dt}J(\bar{y}(t),\bar{z}(t))=-\alpha\bar{z}(t)^2.
\]
If $p>p_{\rm S}$, then $\alpha>0$, and hence, $\frac{d}{dt}J(\bar{y}(t),\bar{z}(t))\le 0$.
Then, $J$ is a Lyapunov function of (\ref{S2E3}).
We see by the initial condition in (\ref{S2E2}) that $(\bar{y}(-\infty),\bar{z}(-\infty))=(0,0)$.
Therefore, $J(\bar{y}(t),\bar{y}(t))\le 0$ for all $t\in\R$.

The system (\ref{S2E3}) has the unique equilibrium $(1,0)$ in the bounded set $\{(\bar{y},\bar{z})\in\R^2;\ J(\bar{y},\bar{z})<0,\ \bar{y}>0\}$.
It follows from the Poincar\'e-Bendixson theorem that $(\bar{y}(t),\bar{z}(t))\rightarrow (1,0)$ as $t\rightarrow\infty$.
Next, we study the behavior of $(\bar{y}(t),\bar{z}(t))$ near $(1,0)$.
The two eigenvalues of the linearization at $(1,0)$ are given by $\lambda^2+\alpha\lambda+p-1=0$.
Therefore, $(1,0)$ is a stable spiral if $\alpha^2-4(p-1)<0$.
This inequality is equivalent to $(N-2-2\mu)^2-8(N-2-\mu)<0$.
Solving this inequality for $\mu$, we have
\begin{equation}\label{S2E4}
\frac{N-4-2\sqrt{N-1}}{2}<\mu<\frac{N-4+2\sqrt{N-1}}{2}.
\end{equation}
Since $1+4/(N-4+2\sqrt{N-1})<p_{\rm S}<p$, we see that $\mu<(N-4+2\sqrt{N-1})/2$.
If $N\le 10$, then $(N-4-2\sqrt{N-1})/2\le 0<\mu$, and hence (\ref{S2E4}) holds.
In the case $N\ge 11$, (\ref{S2E4}) holds if
\begin{equation}\label{S2E5}
p<1+\frac{4}{N-4-2\sqrt{N-1}}(=p_{\rm JL}).
\end{equation}
We have seen the following: The orbit $(\bar{y}(t),\bar{z}(t))$ starts from $(0,0)$ at $t=-\infty$ and converges to $(1,0)$ as $t\rightarrow\infty$.
Moreover, if (\ref{S2E5}) holds, then $(\bar{y}(t),\bar{z}(t))$ rotates clockwise around $(1,0)$.
Therefore, there is $\{t_j\}_{j=1}^{\infty}$ $(t_1<t_2<\cdots\rightarrow\infty)$ such that $z(t_j)=0$ $(j\in\{1,2,\ldots\})$ and
\[
y(t_2)<y(t_4)<\cdots<y(t_{2j})<\cdots<1<\cdots<y(t_{2j-1})<\cdots<y(t_3)<y(t_1).
\]
This means that $y(t)$ oscillates around $1$ infinitely many times.
Since $\bar{y}(t)=\frac{\bar{u}(\rho,\bar{\gamma})}{\bar{u}^*(\rho)}$, the intersection number between $\bar{u}(\rho,\bar{\gamma})$ and $\bar{u}^*(\rho)$, which we denote by $\calZ_{(0,\infty)}[\bar{u}(\,\cdot\,,\bar{\gamma})-\bar{u}^*(\,\cdot\,)]$, is $\infty$.
\begin{proposition}\label{S2P1}
(i) Let $\bar{u}(\rho,\bar{\gamma})$ be the solution of (\ref{S2E1}).
If $p_{\rm S}<p<p_{\rm JL}$, then $\calZ_{(0,\infty)}[\bar{u}(\,\cdot\,,\bar{\gamma})-\bar{u}^*(\,\cdot\,)]=\infty$.\\
(ii) Let $(\bar{y}(t),\bar{z}(t))$ be the solution of (\ref{S2E2}).
If $p>p_{\rm S}$, then, for each $\bar{\gamma}>0$, $(\bar{y}(t),\bar{z}(t))$ converges to $(1,0)$ as $t\rightarrow\infty$.
\end{proposition}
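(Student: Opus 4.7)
The strategy is to formalize the phase-plane analysis sketched in the paragraphs immediately preceding the statement. My plan proceeds as follows: \emph{(1)} convert the initial data at $t=-\infty$ in (\ref{S2E2}) into the concrete asymptotics $(\bar{y}(-\infty),\bar{z}(-\infty))=(0,0)$; \emph{(2)} use $J$ as a Lyapunov function on a bounded sublevel set and apply LaSalle's invariance principle together with Poincar\'e-Bendixson to conclude $(\bar{y}(t),\bar{z}(t))\to(1,0)$, which gives (ii); \emph{(3)} under the hypothesis $p_{\rm S}<p<p_{\rm JL}$, read off from the linearization at $(1,0)$ that the approach is spiral, so that $\bar{y}(t)-1$ changes sign infinitely often, which will yield (i).

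For step (1), the first asymptotic condition in (\ref{S2E2}) gives $\bar{y}(t)=\bar{\gamma}a^{-1}e^{m\mu t}(1+o(1))$ as $t\to-\infty$, with $m,\mu>0$, so $\bar{y}(t)\to 0$; combining this with the second condition yields $\bar{z}(t)\to 0$, and hence $J(\bar{y}(t),\bar{z}(t))\to 0$ as $t\to-\infty$. Since $p>p_{\rm S}$ is equivalent to $(p-1)(N-2)>4$, one checks $\alpha=m(N-2-2\mu)>0$, so $\tfrac{d}{dt}J=-\alpha \bar{z}^2\le 0$ and $J(\bar{y}(t),\bar{z}(t))\le 0$ for all $t\in\R$. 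The sublevel set $\Omega:=\{(\bar{y},\bar{z}):J(\bar{y},\bar{z})\le 0,\;\bar{y}\ge 0\}$ is bounded because the potential $-\bar{y}^2/2+\bar{y}^{p+1}/(p+1)$ is coercive. Inside $\Omega$ the only equilibria of (\ref{S2E3}) are $(0,0)$ and $(1,0)$.

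For step (2), LaSalle's invariance principle places the $\omega$-limit set inside the largest invariant subset of $\{\bar{z}=0\}\cap\Omega$; on $\bar{z}=0$ the second equation forces $\bar{y}-\bar{y}^p=0$, so this invariant set reduces to $\{(0,0),(1,0)\}$. The origin is a saddle (the linearization there has eigenvalues $(-\alpha\pm\sqrt{\alpha^2+4})/2$, of opposite signs), and the orbit cannot re-accumulate at $(0,0)$: once $\bar{z}\not\equiv 0$, $J$ is strictly decreasing, so $J(\bar{y}(t),\bar{z}(t))<0$ eventually and the orbit stays bounded away from the level $\{J=0\}$, which contains $(0,0)$. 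This forces $(\bar{y}(t),\bar{z}(t))\to(1,0)$, proving (ii). For step (3), the computation following (\ref{S2E4})-(\ref{S2E5}) shows that under $p_{\rm S}<p<p_{\rm JL}$ we have $\alpha^2<4(p-1)$, so the eigenvalues of the linearization at $(1,0)$ form a complex conjugate pair with negative real part. By Hartman-Grobman, or by a direct polar-coordinate computation around $(1,0)$, the orbit spirals into $(1,0)$, and hence $\bar{y}(t)-1$ has a sequence of zeros accumulating at $+\infty$. Translating back via $\bar{y}(t)=\bar{u}(\rho,\bar{\gamma})/\bar{u}^*(\rho)$ with $\rho=e^{mt}$ yields $\calZ_{(0,\infty)}[\bar{u}(\,\cdot\,,\bar{\gamma})-\bar{u}^*(\,\cdot\,)]=\infty$, which is (i).

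The main obstacle is the careful exclusion of $(0,0)$ from the $\omega$-limit set: a priori the saddle is a candidate attractor because it lies in the closed set $\{J\le 0\}$, and one must use the strict monotonicity of $J$ off $\{\bar{z}=0\}$ together with the initial data on the unstable manifold to exclude it. A secondary but less serious point is the passage from \emph{linearized} spiral behavior to \emph{actual} infinitely many sign changes of $\bar{y}-1$, which is standard but deserves an explicit verification in polar coordinates centered at $(1,0)$.
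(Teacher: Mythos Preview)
Your proposal is correct and follows essentially the same phase-plane argument the paper gives (informally, as a recap of known results) in the paragraphs preceding the proposition: the Lyapunov function $J$, Poincar\'e--Bendixson in the bounded sublevel set $\{J<0,\ \bar y>0\}$, and the linearization at $(1,0)$. Your treatment is in fact more careful than the paper's about excluding the saddle $(0,0)$ from the $\omega$-limit set; the only caveat is that Hartman--Grobman alone does not guarantee spiral behavior (topological conjugacy need not preserve winding), but the polar-coordinate alternative you invoke does, and you already flag this point.
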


% **********************************************************
% **********************************************************
% **********************************************************
% Section 3
% **********************************************************
% **********************************************************
% **********************************************************
\section{Parameterization results}
The aim of this section is to show that the regular solutions of (\ref{D}) can be parameterized by $\gamma$.
Parametrization results for Euclidean cases were obtained by several authors.
See \cite{K97,Mi14a} for example.
The proof is similar.
However, we give the proof for readers' convenience.
\begin{lemma}\label{S3L1}
Suppose that $p>1$.
Let $(R_0,u_0(r))$ be a solution of (\ref{DD}) with $\gamma=\gamma_0$.
Then, there is a $C^1$-mapping $\gamma\mapsto (R(\gamma),u(r,\gamma))$ such that all solutions of (\ref{DD}) near $(R_0,u_0(r))$ can be described as $\{(R(\gamma),u(r,\gamma))\}_{|\gamma-\gamma_0|<\varepsilon}$ $(u(0,\gamma)=\gamma)$ and that $(R(\gamma_0),u(r,\gamma_0))=(R_0,u_0(r))$.
\end{lemma}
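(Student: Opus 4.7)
The plan is to apply the implicit function theorem to the solution map of the initial value problem (\ref{DD}) at its first positive zero.

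First I would establish that, for $\gamma$ in a neighborhood of $\gamma_0$, (\ref{DD}) admits a unique solution $u(r,\gamma)$ of class $C^1$ jointly in $(r,\gamma)$ on any compact subinterval of $[0,\infty)$ where it persists. The only subtlety is the regular singularity of the radial Laplacian at $r=0$: recasting the ODE as the integral equation
\[
u(r)=\gamma-\int_0^r s^{1-N}\int_0^s t^{N-1}\left[\tfrac{N(N-2)}{4}A(t)^2 u(t)+A(t)^{-q}|u(t)|^{p-1}u(t)\right]dt\,ds,
\]
the right-hand side defines a contraction in $C^0([0,\delta])$ for $\delta$ small, and depends continuously on $\gamma$; because $s\mapsto|s|^{p-1}s$ is of class $C^1$ for $p>1$, iterating the fixed-point argument yields $C^1$ dependence on $\gamma$ on $[0,\delta]$. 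The solution then extends beyond $r=\delta$ by the standard theorem on differentiable dependence on initial data, producing a $C^1$ map $(r,\gamma)\mapsto(u(r,\gamma),u_r(r,\gamma))$ wherever defined.

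Next I would exploit the non-degeneracy $u_0'(R_0)<0$. Viewing $u_0$ as a solution of the linear ODE in which $A(r)^{-q}|u_0(r)|^{p-1}$ is treated as a known coefficient, if we had $u_0(R_0)=u_0'(R_0)=0$ then Cauchy uniqueness at $r=R_0$ would force $u_0\equiv 0$, contradicting $u_0(0)=\gamma_0>0$. Setting $F(r,\gamma):=u(r,\gamma)$, we have $F$ of class $C^1$ with $F(R_0,\gamma_0)=0$ and $\partial_r F(R_0,\gamma_0)=u_0'(R_0)<0$. The implicit function theorem then yields $\varepsilon>0$ and a $C^1$ function $R:(\gamma_0-\varepsilon,\gamma_0+\varepsilon)\to\R$ satisfying $R(\gamma_0)=R_0$ and $u(R(\gamma),\gamma)=0$.

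Finally I would verify that this parameterization captures every nearby solution. Continuous dependence (from the first step) lets us shrink $\varepsilon$ so that $u(\cdot,\gamma)>0$ on $[0,R(\gamma))$; hence $R(\gamma)$ is genuinely the first positive zero and $(R(\gamma),u(\,\cdot\,,\gamma))$ is a regular solution of (\ref{DD}). Conversely, any pair $(R,u)$ close to $(R_0,u_0)$ has $u(0)=:\gamma$ near $\gamma_0$, and ODE uniqueness forces $u\equiv u(\,\cdot\,,\gamma)$ and hence $R=R(\gamma)$. The principal technical point is the $C^1$ dependence through the singular point $r=0$; once that is settled the conclusion is a direct application of the implicit function theorem, mirroring the Euclidean arguments in \cite{K97,Mi14a} invoked by the authors.
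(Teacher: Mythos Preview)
Your proposal is correct and follows essentially the same route as the paper: both establish $C^1$ dependence of $u(r,\gamma)$ on $(r,\gamma)$, use ODE uniqueness at $r=R_0$ to get $u_r(R_0,\gamma_0)\neq 0$, and then apply the implicit function theorem to $u(R,\gamma)=0$. The paper simply asserts the $C^1$ dependence without discussing the singular point $r=0$, whereas you spell out the integral-equation/contraction argument there; otherwise the arguments coincide.
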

\begin{proof}
Since $u(r,\gamma)$ is a solution of (\ref{DD}), $u(r,\gamma)$ is a $C^1$-function of $r$ and $\gamma$.
Since $u$ satisfies the equation in (\ref{DD}), $u_r(R_0,\gamma_0)\neq 0$, otherwise $u(r,\gamma_0)\equiv 0$ $(0<r<R)$ by the uniqueness of the solution of the ODE.
Since $u(R_0,\gamma_0)=0$, we can apply the implicit function theorem to $u(r,\gamma)=0$.
Then, there is a $C^1$-function $R=R(\gamma)$ defined on $|\gamma-\gamma_0|<\e$ such that $u(R(\gamma),\gamma)=0$ and $R(\gamma_0)=R_0$.
Because of the continuity of $u(r,\gamma)$, $u(r,\gamma)>0$ in $\{(r,\gamma);\ 0<r<R(\gamma),\ |\gamma-\gamma_0|<\e\}$.
Thus, $(R(\gamma),u(r,\gamma))$ is a solution of (\ref{DD}).
The implicit function theorem also says that all solutions of (\ref{DD}) near $(R_0,u_0(r))$ are $\{(R(\gamma),u(r,\gamma))\}_{|\gamma-\gamma_0|<\e}$ and that the mapping $\gamma\mapsto (R(\gamma),u(r,\gamma))$ is of class $C^1$.
The proof is complete.
\end{proof}

\begin{lemma}\label{S3L2}
Suppose that $p>1$.
Let $U(\theta,\Gamma)$ be the solution of (\ref{IVP}).
Then $U(\,\cdot\,,\Gamma)$ has the first positive zero $\Theta(\Gamma)\in (0,\pi)$.
\end{lemma}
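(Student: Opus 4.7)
The plan is in two steps: first, to show $U(\cdot,\Gamma)$ extends as a $C^2$ function to the full interval $[0,\pi)$, and second, to prove by contradiction that it cannot stay positive throughout, forcing the existence of a first positive zero in $(0,\pi)$.

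For the extension, local existence near the regular singular point $\theta=0$ (consistent with $U(0)=\Gamma$, $U'(0)=0$) is standard. To reach $\theta=\pi$, I would introduce the energy $E(\theta):=\frac{1}{2}(U')^2+\frac{|U|^{p+1}}{p+1}$. A direct differentiation using (\ref{IVP}) yields $E'(\theta)=-(N-1)\cot\theta\,(U')^2$, so $E$ is non-increasing on $[0,\pi/2]$; on $[\pi/2,\pi)$ the bound $(U')^2\le 2E$ gives $(\ln E)'\le 2(N-1)|\cot\theta|$, which integrates to
\[
E(\theta)\le E(\pi/2)(\sin\theta)^{-2(N-1)},\qquad \theta\in[\pi/2,\pi).
\]
This is finite for each $\theta<\pi$, so $(U,U')$ stays bounded on every compact subinterval of $[0,\pi)$, and the maximal interval of existence is $[0,\pi)$.

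For the vanishing claim, suppose $U>0$ on all of $[0,\pi)$. Writing (\ref{IVP}) in divergence form $(\sin^{N-1}\theta\,U')'=-\sin^{N-1}\theta\,U^p$ and integrating from $0$ using $U'(0)=0$ gives
\[
\sin^{N-1}\theta\,U'(\theta)=-\int_0^\theta \sin^{N-1}s\,U^p(s)\,ds,
\]
which is strictly negative on $(0,\pi)$. Hence $U$ is monotonically decreasing with $U\le\Gamma$, so the integrand is dominated by $\Gamma^p\sin^{N-1}s$ (integrable on $(0,\pi)$), and the strict positivity of $U$ makes $C:=\int_0^\pi \sin^{N-1}s\,U^p(s)\,ds$ strictly positive. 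Consequently $U'(\theta)\sim -C/\sin^{N-1}\theta$ as $\theta\to\pi^-$. Since $\sin s\sim\pi-s$ near $\pi$ and $N\ge 3$, the integral $\int^{\pi} ds/\sin^{N-1}s$ diverges, and integrating $U'$ from any $\theta_0\in(\pi/2,\pi)$ drives $U(\theta)\to-\infty$, contradicting $U>0$. Thus $U$ must vanish somewhere in $(0,\pi)$, and by continuity with $U(0)=\Gamma>0$ the first such zero $\Theta(\Gamma)$ exists in the open interval.

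The main subtlety I anticipate is that the argument has to work regardless of whether $\lim_{\theta\to\pi}U$ is positive or zero. The divergence-form integration handles both cases uniformly, because the constant $C$ is strictly positive for the single reason that $U$ is positive on a nondegenerate interval, with no need to resolve the detailed behavior of $U$ near $\pi$.
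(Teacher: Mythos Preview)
Your proof is correct and follows essentially the same approach as the paper: write the equation in divergence form $(\sin^{N-1}\theta\,U')'=-\sin^{N-1}\theta\,U^p$, assume $U>0$ on $[0,\pi)$, deduce $U'<0$ and then that $-U'(\theta)$ is bounded below by a positive constant times $\sin^{1-N}\theta$ near $\pi$, whose integral diverges and forces $U\to-\infty$. The paper obtains the lower bound by freezing an intermediate point $\theta_2$ and using $C(\theta_2)>0$, whereas you pass to the full limit $C=\int_0^\pi \sin^{N-1}s\,U^p\,ds$ and use the asymptotic $U'\sim -C/\sin^{N-1}\theta$; these are equivalent in spirit. Your added energy argument for global existence on $[0,\pi)$ is a point the paper leaves implicit (it simply supposes $U>0$ on $[0,\pi)$), so your write-up is in that respect more complete.
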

\begin{proof}
Let $U$ be the solution of (\ref{IVP}).
By the equation in (\ref{IVP}) we have
\begin{equation}\label{S3L2E1}
(U'\sin^{N-1}\theta)'+|U|^{p-1}U\sin^{N-1}\theta=0.
\end{equation}
Integrating (\ref{S3L2E1}) over $[0,\theta]$, we have
\begin{equation}\label{S3L2E1+}
U'(\theta)=-\frac{1}{\sin^{N-1}\theta}\int_0^{\theta}|U(\varphi)|^{p-1}U(\varphi)\sin^{N-1}\varphi d\varphi.
\end{equation}
Thus, 
\begin{equation}\label{S3L2E2}
\textrm{if $U(\theta)>0$ for $\theta\in[0,\theta_0)$, then $U'(\theta)<0$ for $\theta\in(0,\theta_0]$}.
\end{equation}
By contradiction we prove the statement of the lemma.
Suppose the contrary, i.e., $U(\theta)>0$ for $\theta\in[0,\pi)$.
By (\ref{S3L2E2}) we see that $U'(\theta)<0$ for $\theta\in(0,\pi)$.

Let $\theta_1$ and $\theta_2$ be such that $0<\theta_1<\theta_2<\pi$.
We let $\theta>\theta_2$.
Integrating (\ref{S3L2E1}) over $[\theta_1,\theta]$, we have
\[
U'(\theta)=-\frac{C(\theta)}{\sin^{N-1}\theta},
\]
where
\[
C(\theta):=|U'(\theta_1)|\sin^{N-1}\theta_1+\int_{\theta_1}^{\theta}|U(\varphi)|^{p-1}U(\varphi)\sin^{N-1}\varphi d\varphi.
\]
We have
\begin{align*}
C(\theta_2)&=|U'(\theta_1)|\sin^{N-1}\theta_1+\int_{\theta_1}^{\theta_2}|U(\varphi)|^{p-1}U(\varphi)\sin^{N-1}\varphi d\varphi\\
&\ge|U'(\theta_1)|\sin^{N-1}\theta_1+U(\theta_2)^p\int_{\theta_1}^{\theta_2}\sin^{N-1}\varphi d\varphi\\
&>0.
\end{align*}
Since $\theta_2<\theta$, $C(\theta_2)<C(\theta)$.
Therefore,
\begin{equation}\label{S3L2E3}
U'(\theta)<-\frac{C(\theta_2)}{\sin^{N-1}\theta}\ \ \textrm{for}\ \ \theta>\theta_2.
\end{equation}
Integrating (\ref{S3L2E3}) over $[\theta_2,\theta]$, we have
\[
U(\theta)\le U(\theta_2)-C(\theta_2)\int_{\theta_2}^{\theta}\frac{d\varphi}{\sin^{N-1}\varphi}.
\]
Hence, $U(\theta)\to -\infty$ as $\theta\uparrow\pi$.
This contradicts the assumption.
Thus, there exists the first positive zero $\Theta(\Gamma)\in(0,\pi)$.
\end{proof}

As we see in the following lemma, the solution set of (\ref{D}) is a curve and it can be parametrized by $\gamma$.
\begin{lemma}\label{S3L3}
Suppose that $p>1$.
There is a $C^1$-mapping $\gamma\mapsto (R(\gamma),u(r,\gamma))$ defined on $(0,\infty)$ such that all regular solutions of (\ref{D}) can be described as $(R(\gamma),u(r,\gamma))$.
Specifically, for each $\gamma>0$, $R(\gamma)$ is defined and $0<R(\gamma)<\infty$.
\end{lemma}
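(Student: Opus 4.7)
The plan is to combine Lemma~\ref{S3L1} (local $C^1$-parameterization at any regular solution of (\ref{DD})) with Lemma~\ref{S3L2} (global existence of a first positive zero for (\ref{IVP})), shuttling between the IVPs (\ref{IVP}) and (\ref{DD}) via the stereographic change of variables used to derive (\ref{D}) from (\ref{EFS}). Lemma~\ref{S3L2} will supply the global existence of $R(\gamma)$; Lemma~\ref{S3L1} will supply the local smoothness, which then globalizes by uniqueness.

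For the existence part, I would fix an arbitrary $\gamma>0$ and set $\Gamma:=2^{-(N-2)/2}\gamma>0$. Applying Lemma~\ref{S3L2} to the solution $U(\,\cdot\,,\Gamma)$ of (\ref{IVP}) produces a first positive zero $\Theta(\Gamma)\in(0,\pi)$. Defining $u(r,\gamma):=A(r)^{(N-2)/2}U(\theta,\Gamma)$ with $r=\tan(\theta/2)$, the same computation that yielded (\ref{D}) from (\ref{EFS}) shows that $u(\,\cdot\,,\gamma)$ satisfies (\ref{DD}) with $u(0,\gamma)=2^{(N-2)/2}\Gamma=\gamma$. Since $\theta\mapsto\tan(\theta/2)$ is an increasing bijection from $[0,\pi)$ onto $[0,\infty)$ and $A(r)^{(N-2)/2}>0$, the first positive zero of $u(\,\cdot\,,\gamma)$ is
\[
R(\gamma):=\tan\frac{\Theta(\Gamma)}{2}\in(0,\infty).
\]
This defines $\gamma\mapsto(R(\gamma),u(\,\cdot\,,\gamma))$ on all of $(0,\infty)$, and uniqueness of the IVP (\ref{DD}) at the origin ensures that every regular solution of (\ref{D}) is obtained this way by reading off its value at $r=0$.

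To upgrade the pointwise construction to a $C^1$-mapping, I would invoke Lemma~\ref{S3L1} at each $\gamma_0\in(0,\infty)$, which supplies a local $C^1$-branch $\gamma\mapsto(\widetilde R(\gamma),\widetilde u(\,\cdot\,,\gamma))$ of regular solutions of (\ref{DD}) passing through $(R(\gamma_0),u(\,\cdot\,,\gamma_0))$ on $|\gamma-\gamma_0|<\e$. By uniqueness of the IVP and of the first positive zero, this local branch must coincide with the globally defined curve on a neighborhood of $\gamma_0$, so $R$ and $u$ inherit $C^1$-regularity everywhere on $(0,\infty)$. No serious obstacle stands in the way: the essential content is Lemma~\ref{S3L2}, which prevents the zero from escaping to $\theta=\pi$ (equivalently $R=\infty$); the rest is a gluing of local patches supplied by the implicit function theorem. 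The only routine verification is that $r=\tan(\theta/2)$ together with the gauge $u=A(r)^{(N-2)/2}U$ gives a bijection between regular radial solutions of (\ref{EFODE}) and those of (\ref{D}), sending ``first positive zero'' to ``first positive zero''.
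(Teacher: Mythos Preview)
Your proposal is correct and follows essentially the same approach as the paper: invoke Lemma~\ref{S3L2} (via the stereographic change of variables) to guarantee that each $u(\,\cdot\,,\gamma)$ has a first zero $R(\gamma)\in(0,\infty)$, then apply Lemma~\ref{S3L1} at each $\gamma_0$ for the $C^1$-regularity. The paper's proof is terser---it simply says ``Because of Lemma~\ref{S3L2}, \ldots also has the first positive zero'' and ``By Lemma~\ref{S3L1} we see that $R(\gamma)$ is of class $C^1$''---but the logical content is identical to yours.
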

\begin{proof}
Let $u(r,\gamma)$ be the solution of (\ref{DD}).
Because of Lemma~\ref{S3L2}, the solution $u(\,\cdot\,,\gamma)$ of (\ref{DD}) also has the first positive zero $R(\gamma)\in (0,\infty)$.

The first positive zero $R(\gamma)$ is defined for every $\gamma>0$, and $0<R(\gamma)<\infty$ for $\gamma>0$.
By Lemma~\ref{S3L1} we see that $R(\gamma)$ is of class $C^1$.
It is clear that $\{(R(\gamma),u(r,\gamma))\}_{\gamma>0}$ is the set of all regular solutions of (\ref{D}).
The proof is complete.
\end{proof}

% **********************************************************
% **********************************************************
% **********************************************************
% Section 4
% **********************************************************
% **********************************************************
% **********************************************************
\section{Singular solution}
In this section we show that (\ref{D}) has a singular solution $(R^*,u^*(r))$.
Let $u(r)$ be a solution of (\ref{D}).
We use the change of variables
\begin{equation}\label{y}
y(t):=2^{-\frac{q}{p-1}}\frac{u(r)}{\bar{u}^*(r)}\ \ \textrm{and}\ \ t:=\frac{1}{m}\log r.
\end{equation}
Here, $\bar{u}^*(r)$ is defined by (\ref{S2E0}), $m$ is defined by (\ref{m}).
Then $y$ satisfies
\begin{equation}\label{S4E1}
y''+\alpha y'-y+y^p+B_0(t)y^p+B_1(t)y=0,
\end{equation}
where $\alpha$ is defined by (\ref{alpha}),
\begin{equation}\label{S4E1+}
B_0(t):=\left(1+e^{2mt}\right)^q-1,\ \ \textrm{and}\ \ B_1(t):=\frac{N(N-2)e^{2mt}}{(1+e^{2mt})^2}.
\end{equation}
Note that $B_0(t)>0$ and $B_1(t)>0$.

We construct the singular solution near $t=-\infty$.
\begin{lemma}\label{S4L1}
Suppose that $p>p_{\rm S}$.
Assume that the problem
\begin{equation}\label{S4L1E1}
\begin{cases}
y''+\alpha y'-y+y^p+B_0(t)y^p+B_1(t)y=0,\\
y(t)\rightarrow 1\ \textrm{as}\ t\rightarrow -\infty
\end{cases}
\end{equation}
has a solution $y^*(t)$ near $t=-\infty$.
Then, $y^*(t)$ satisfies
\begin{equation}\label{S4L1E1+}
y^*(t)=1+O(e^{2mt})\ \textrm{as}\ t\rightarrow -\infty.
\end{equation}
\end{lemma}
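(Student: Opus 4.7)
The plan is to set $w := y^* - 1$ so that $w(t) \to 0$ as $t \to -\infty$, and to linearize the equation at $y = 1$. Using $(1+w)^p = 1 + pw + O(w^2)$ for small $w$, the equation in \eqref{S4L1E1} becomes
\[
w'' + \alpha w' + (p-1)w = F(t,w),
\]
where the right-hand side collects the quadratic remainder and the explicit perturbations $-B_0(t)(1+w)^p - B_1(t)(1+w)$. The formulas in \eqref{S4E1+} give $B_0(t), B_1(t) = O(e^{2mt})$ as $t \to -\infty$, so for $|w|$ small,
\[
|F(t,w)| \le C_1 e^{2mt} + C_2 e^{2mt}|w| + C_3 w^2.
\]

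The key structural input is that $p > p_{\rm S}$ forces $\alpha = m(N-2-2\mu) > 0$, so both roots $\lambda_\pm$ of $\lambda^2 + \alpha\lambda + (p-1) = 0$ have strictly negative real parts. Setting $\beta := \min\{-\mathrm{Re}\,\lambda_+, -\mathrm{Re}\,\lambda_-\} > 0$, every fundamental solution $e^{\lambda_\pm t}$ blows up as $t \to -\infty$, so the zero solution is the only homogeneous solution decaying at $-\infty$. Consequently any $w$ that tends to $0$ must be given by the variation-of-parameters formula
\[
w(t) = \int_{-\infty}^{t} G(t-s)\,F(s, w(s))\,ds,
\]
where the constant-coefficient kernel satisfies $|G(\tau)| \le C e^{-\beta \tau}$ for $\tau \ge 0$.

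To close, I would introduce the weighted sup-norm $\Psi(t) := \sup_{s \le t} e^{-2ms}|w(s)|$ on some interval $(-\infty, T]$ chosen so that $|w| \le \varepsilon$ there, with $\varepsilon$ as small as needed. Inserting the bound on $F$ and evaluating the convolution $\int_{-\infty}^{t} e^{-\beta(t-s)} e^{2ms}\,ds = (2m+\beta)^{-1} e^{2mt}$, one obtains
\[
\Psi(t) \le \frac{C}{2m+\beta}\bigl(1 + \varepsilon\,\Psi(t)\bigr),
\]
where the $\varepsilon$ term absorbs both the quadratic and the $e^{2mt}|w|$ contributions (the latter because $e^{2ms} \le e^{2mT}$ is comparable to $\varepsilon$ for $T$ negative enough). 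Choosing $T$ so small that $C\varepsilon/(2m+\beta) < \tfrac{1}{2}$ then forces $\Psi(t) \le 2C/(2m+\beta)$, i.e., $|w(t)| \le C' e^{2mt}$, which is precisely \eqref{S4L1E1+}.

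The main obstacle is to upgrade the qualitative decay $w(t) \to 0$ (with no stated rate) to the quantitative exponential rate $e^{2mt}$. This succeeds only because (i) the sign $\alpha > 0$, equivalent to $p > p_{\rm S}$, rules out homogeneous solutions decaying at $-\infty$ and pins $w$ down via variation of parameters, and (ii) the weighted sup-norm $\Psi$ is calibrated to the forcing rate so that the nonlinear terms are absorbed into a linear Gronwall inequality using the smallness of $\varepsilon$. Without either ingredient the rate $e^{2mt}$ cannot be captured.
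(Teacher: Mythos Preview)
Your approach is correct and essentially identical to the paper's: linearize via $w=y^*-1$, use that $\alpha>0$ (equivalently $p>p_{\rm S}$) forces both characteristic roots of $\lambda^2+\alpha\lambda+(p-1)=0$ to have negative real part so no homogeneous solution decays at $-\infty$, and then bootstrap the variation-of-parameters integral against the $O(e^{2mt})$ forcing; the paper does exactly this after the substitution $\tau=-t$, $\eta(\tau)=y(t)-1$ and defers the final Gronwall step to Merle--Peletier~\cite{MP91}. The only point worth tightening is that your absorption inequality $\Psi(t)\le\frac{C}{2m+\beta}(1+\varepsilon\Psi(t))$ tacitly assumes $\Psi(t)<\infty$, which is not immediate from $w\to 0$ alone; this is routinely fixed by first iterating from the crude bound $|w|\le\varepsilon$ or by working with a supremum over $[a,t]$ and sending $a\to-\infty$.
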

\begin{proof}
Let $\tau:=-t$ and $\eta(\tau):=y(t)-1$.
Then $\eta(\tau)$ satisfies
\begin{equation}\label{S4L1E2}
\begin{cases}
\eta''-\alpha\eta'+(p-1)\eta=g(\tau), & \tau_0<\tau<\infty,\\
\eta(\tau)\rightarrow 0\ \textrm{as}\ \tau\rightarrow\infty,
\end{cases}
\end{equation}
where $\tau_0$ is large,
\begin{equation}\label{S4L1E3}
g(\tau):=-B_0(-\tau)(\eta+1)^p-B_1(-\tau)(\eta+1)-\varphi(\eta),
\end{equation}
\[
\varphi(\eta):=(1+\eta)^p-1-p\eta.
\]
There are three cases:
\begin{equation}\label{S4L1E31}
\textrm{(1)}\ p-1>\left(\frac{\alpha}{2}\right)^2,\quad
\textrm{(2)}\ p-1<\left(\frac{\alpha}{2}\right)^2,\quad
\textrm{(3)}\ p-1=\left(\frac{\alpha}{2}\right)^2.
\end{equation}
We consider only the case (1).
The other cases can be similarly treated.
Because the linearly independent solutions of the homogeneous equation associated with the equation of (\ref{S4L1E2}) becomes unbounded as $\tau\rightarrow\infty$, we have
\[
\eta(\tau)=\frac{e^{\frac{\alpha\tau}{2}}}{\beta}\int_{\tau}^{\infty}e^{-\frac{\alpha}{2}\sigma}\sin (\beta(\sigma-\tau))g(\sigma)d\sigma,
\]
where $\beta:=\sqrt{(p-1)-\left(\frac{\alpha}{2}\right)^2}$.
If $|\eta|$ is small, then there are a small $\e>0$ and $\tau_{\e}$ such that
\begin{equation}\label{S4L1E4}
|\varphi(\tau)|\le |(1+\eta)^p-1-p\eta|\le\e|\eta|\ (\tau>\tau_{\e})
\end{equation}
By (\ref{S4L1E3}) and (\ref{S4L1E4}) we have
\[
|g(\tau)|\le C_0e^{-2m\tau}+\e|\eta(\tau)|\ (\tau>\tau_{\e}).
\]
%We can obtain (\ref{S4L1E1+}), u
Using the same method as in the proof of Merle-Peletier~\cite[Lemma~3.1]{MP91}, we have $\eta(\tau)=O(e^{-2m\tau})$ as $\tau\to\infty$.
Therefore, (\ref{S4L1E1+}) holds.
See \cite[Lemma~6.3]{Mi14a} for details.
\end{proof}

\begin{lemma}\label{S4L2}
Suppose that $p>p_{\rm S}$.
The problem (\ref{S4L1E1}) has a unique solution near $t=-\infty$.
\end{lemma}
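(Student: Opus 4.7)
The plan is to recast (\ref{S4L1E1}) as a fixed-point equation and apply Banach's contraction principle in a weighted Banach space, obtaining existence and uniqueness in one stroke. As in the proof of Lemma~\ref{S4L1}, set $\tau := -t$ and $\eta(\tau) := y(-\tau) - 1$, so the problem becomes
\[
\eta'' - \alpha \eta' + (p-1)\eta = g(\eta, \tau), \qquad \eta(\tau) \to 0 \text{ as } \tau \to \infty,
\]
with $g(\eta, \tau) = -B_0(-\tau)(1+\eta)^p - B_1(-\tau)(1+\eta) - \varphi(\eta)$ and $\varphi(\eta) = (1+\eta)^p - 1 - p\eta$. Since $\alpha > 0$ for $p > p_{\rm S}$, both roots of $\lambda^2 - \alpha\lambda + (p-1)=0$ have positive real part, so variation of parameters singles out a unique integral representation for any decaying solution. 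In case (1) of (\ref{S4L1E31}) this reads
\[
\eta(\tau) = (T\eta)(\tau) := \frac{e^{\alpha\tau/2}}{\beta} \int_\tau^\infty e^{-\alpha\sigma/2} \sin(\beta(\sigma - \tau)) g(\eta(\sigma), \sigma) \, d\sigma,
\]
with the obvious analogues in cases (2) and (3).

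Next I introduce the weighted Banach space
\[
X_{\tau_0, M} := \{\eta \in C([\tau_0, \infty)) : \|\eta\|_X := \sup_{\tau \ge \tau_0} e^{2m\tau}|\eta(\tau)| \le M\}.
\]
The elementary estimates $B_0(-\tau), B_1(-\tau) = O(e^{-2m\tau})$ and $\varphi(\eta) = O(\eta^2)$, together with the identity $e^{\alpha\tau/2} \int_\tau^\infty e^{-(\alpha/2 + k)\sigma} d\sigma = (\alpha/2 + k)^{-1} e^{-k\tau}$ for any $k > 0$, give $\|T 0\|_X \le C_0$ with $C_0$ independent of $\tau_0$ for $\tau_0$ large, and, via the mean value theorem applied to $g(\cdot, \tau)$,
\[
\|T\eta_1 - T\eta_2\|_X \le \frac{C(1 + M)}{\beta(\alpha/2 + 4m)} e^{-2m\tau_0} \|\eta_1 - \eta_2\|_X
\]
on $X_{\tau_0, M}$. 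Choosing $M := 2 C_0$ and then $\tau_0$ so large that the contraction factor is at most $1/2$ makes $T : X_{\tau_0, M} \to X_{\tau_0, M}$ a strict contraction, producing a unique fixed point $\eta^* \in X_{\tau_0, M}$, which by construction satisfies the ODE.

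For the uniqueness assertion of the lemma, let $\tilde\eta$ be any other solution of (\ref{S4L1E1}). Lemma~\ref{S4L1} gives $\tilde\eta(\tau) = O(e^{-2m\tau})$, so $\tilde\eta \in X_{\tau_0', M'}$ for some $\tau_0' \ge \tau_0$ and $M' \ge M$; after enlarging $\tau_0'$ further, $T$ is still a contraction on $X_{\tau_0', M'}$, and both $\tilde\eta$ and $\eta^*|_{[\tau_0', \infty)}$ are fixed points there, hence coincide on $[\tau_0', \infty)$, and standard backward ODE uniqueness propagates this equality over the full left-interval of common existence. The principal obstacle is calibrating $M$ and $\tau_0$: the source term $-B_0(-\tau) - B_1(-\tau)$ does not vanish at $\eta = 0$, so $M$ is bounded below by $\|T 0\|_X$, and since the contraction constant scales with $1 + M$, $\tau_0$ must be chosen correspondingly large. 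Cases (2) and (3) of (\ref{S4L1E31}) are handled along identical lines since the variation-of-parameters kernels have different algebraic form but the same exponential envelope $e^{\alpha\tau/2} e^{-\alpha\sigma/2}$.
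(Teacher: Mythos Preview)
Your proof is correct and follows essentially the same route as the paper: recast (\ref{S4L1E1}) as the integral equation $\eta=\calF(\eta)$ via variation of parameters and apply the contraction mapping theorem on a suitable ball, with uniqueness coming from the a~priori decay of Lemma~\ref{S4L1}. The only difference is cosmetic: the paper works in the unweighted space $C^0[\tau_0,\infty)$ with a small sup-norm ball, whereas you use the weighted norm $\sup_{\tau\ge\tau_0}e^{2m\tau}|\eta(\tau)|$, which packages the decay rate into the function space and makes the smallness come entirely from $e^{-2m\tau_0}$ rather than from a separate smallness parameter $\delta$.
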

\begin{proof}
There are three cases (\ref{S4L1E31}) as in the proof of Lemma~\ref{S4L1}.
We consider only the case (1).
We transform (\ref{S4E1}) to the integral equation
\begin{equation}\label{S4L2E1}
\eta(\tau)=\calF(\eta)(\tau).
\end{equation}
In the case (1) $\calF$ becomes
\[
\calF(\eta)(\tau)=\frac{e^{\frac{\alpha\tau}{2}}}{\beta}\int_{\tau}^{\infty}e^{-\frac{\alpha}{2}\sigma}\sin(\beta(\sigma-\tau))g(\sigma)d\sigma.
\]

By $\|\,\cdot\,\|$ we denote $\left\|\,\cdot\,\right\|_{C^0[\tau_0,\infty)}$.
We set $X:=\{\eta(\tau)\in C^0[\tau_0,\infty);\ \|\eta(\tau)\|<\infty\}$ and
$\calB:=\{\eta(\tau)\in X;\ \|\eta\|<\delta\}$.
If $\delta>0$ is small, then we can show that $\calF(\calB)\subset\calB$ and $\calF$ is a contraction mapping on $\calB$, using Lemma~\ref{S4L1}.
By the contraction mapping theorem we see that (\ref{S4L2E1}) has a unique solution in $\calB$.
We omit the detail.
\end{proof}
Let $y^*(t)$ be the solution of (\ref{S4L1E1}) obtained in Lemma~\ref{S4L2}.
We define
\begin{equation}\label{S4E2}
u^*(r)=2^{\frac{q}{p-1}}ar^{-\mu}y^*(\frac{1}{m}\log r).
\end{equation}
\begin{corollary}
Suppose that $p>p_{\rm S}$.
Let $u^*(r)$ be defined by (\ref{S4E2}). Then
\begin{equation}\label{S4C2E1}
u^*(r)=2^{\frac{q}{p-1}}ar^{-\mu}(1+o(1))\ \textrm{as}\ r\downarrow 0.
\end{equation}
\begin{equation}\label{S4C2E2}
(u^*)'(r)=-2^{\frac{q}{p-1}}\mu ar^{-\mu-1}(1+o(1))\ \textrm{as}\ r\downarrow 0.
\end{equation}
\end{corollary}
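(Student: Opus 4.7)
The first asymptotic formula \eref{S4C2E1} is essentially immediate. By Lemma~\ref{S4L1}, the singular solution $y^*(t)$ of \eref{S4L1E1} satisfies $y^*(t)=1+O(e^{2mt})$ as $t\to-\infty$. Substituting $t=\frac{1}{m}\log r$ gives $e^{2mt}=r^2$, so $y^*(\tfrac{1}{m}\log r)=1+O(r^2)$ as $r\downarrow 0$. Plugging into the definition \eref{S4E2} yields
\[
u^*(r)=2^{\frac{q}{p-1}}ar^{-\mu}(1+O(r^2)),
\]
which is stronger than \eref{S4C2E1}.

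For the derivative formula \eref{S4C2E2}, I would first differentiate \eref{S4E2} by the chain rule:
\[
(u^*)'(r)=2^{\frac{q}{p-1}}ar^{-\mu-1}\left[-\mu\, y^*\!\left(\tfrac{1}{m}\log r\right)+\tfrac{1}{m}(y^*)'\!\left(\tfrac{1}{m}\log r\right)\right].
\]
In view of the already-established $y^*(t)\to 1$, the conclusion reduces to showing
\[
(y^*)'(t)\to 0\quad\text{as}\quad t\to -\infty,
\]
and in fact an $O(e^{2mt})$ rate (which makes the $(y^*)'$ term lower-order relative to $\mu\cdot y^*$).

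To obtain this derivative estimate, I would work with the equivalent fixed-point formulation from the proof of Lemma~\ref{S4L2}. In case (1) of \eref{S4L1E31}, the solution satisfies
\[
\eta(\tau)=\frac{e^{\frac{\alpha\tau}{2}}}{\beta}\int_{\tau}^{\infty}e^{-\frac{\alpha}{2}\sigma}\sin(\beta(\sigma-\tau))g(\sigma)\,d\sigma,\qquad\tau=-t,
\]
where by the argument in Lemma~\ref{S4L1} we have $g(\sigma)=O(e^{-2m\sigma})$ and $\eta(\tau)=O(e^{-2m\tau})$. Differentiating under the integral sign (the boundary term vanishes because $\sin 0=0$) gives
\[
\eta'(\tau)=\tfrac{\alpha}{2}\eta(\tau)-e^{\frac{\alpha\tau}{2}}\int_{\tau}^{\infty}e^{-\frac{\alpha}{2}\sigma}\cos(\beta(\sigma-\tau))g(\sigma)\,d\sigma,
\]
and since $p-1>(\alpha/2)^2$ in case (1) one checks directly that $\int_\tau^\infty e^{-\alpha\sigma/2}|g(\sigma)|\,d\sigma=O(e^{-(\alpha/2+2m)\tau})$, so that $\eta'(\tau)=O(e^{-2m\tau})$. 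Cases (2) and (3) of \eref{S4L1E31} are handled by the analogous integral representations. Returning to the original variable, $(y^*)'(t)=-\eta'(-t)=O(e^{2mt})$.

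Combining the two estimates in the displayed chain-rule expression gives
\[
(u^*)'(r)=2^{\frac{q}{p-1}}ar^{-\mu-1}\bigl[-\mu(1+O(r^2))+O(r^2)\bigr]=-2^{\frac{q}{p-1}}\mu a\,r^{-\mu-1}(1+o(1)),
\]
which is \eref{S4C2E2}. The only non-routine step is the derivative estimate on $y^*$; everything else is bookkeeping with the change of variables. I expect no real obstacle beyond adapting the Merle--Peletier-type contraction argument already invoked in Lemma~\ref{S4L1} to control $\eta'$ as well as $\eta$.
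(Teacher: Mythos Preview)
Your proposal is correct and follows essentially the same route as the paper: both obtain \eref{S4C2E1} directly from $y^*(t)=1+O(e^{2mt})$, then differentiate the integral representation of $\eta$ to get $\eta'(\tau)=O(e^{-2m\tau})$ (equivalently $(y^*)'(t)=O(e^{2mt})$), and finally differentiate \eref{S4E2} via the chain rule. Your expression $\eta'(\tau)=\tfrac{\alpha}{2}\eta(\tau)-e^{\alpha\tau/2}\int_\tau^\infty e^{-\alpha\sigma/2}\cos(\beta(\sigma-\tau))g(\sigma)\,d\sigma$ is just a repackaging of the paper's formula, and the paper likewise treats only case~(1), leaving the other cases as analogous.
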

\begin{proof}
%(\ref{S4C2E1}) is obtained by (\ref{S4E2}) and Lemma~\ref{S4L1}.
%The proof of (\ref{S4C2E2}) is the same as that of \cite[Corollary~4.6]{Mi14c}.
By (\ref{S4E2}) and Lemmas~\ref{S4L1} and \ref{S4L2} we obtain (\ref{S4C2E1}).
Differentiating (\ref{S4L2E1}) in $\tau$, we have
\[
\eta'(\tau)=\frac{\alpha}{2\beta}e^{\frac{\alpha\tau}{2}}\int_{\tau}^{\infty}e^{-\frac{\alpha}{2}\sigma}\sin(\beta(\sigma-\tau))g(\sigma)d\sigma
-e^{\frac{\alpha\tau}{2}}\int_{\tau}^{\infty}e^{-\frac{\alpha}{2}\sigma}\cos(\beta(\sigma-\tau))g(\sigma)d\sigma.
\]
We have that $\eta'(\tau)=O(e^{-2m\tau})$, and hence, 
\begin{equation}\label{S4C2E3-}
(y^*)'(r)=O(e^{2mt}).
\end{equation}
Differentiating (\ref{S4E2}) in $r$, we have
\begin{equation}\label{S4C2E3}
(u^*)'(r)=-2^{\frac{q}{p-1}}\mu ar^{-\mu-1}y^*(\frac{1}{m}\log r)+2^{\frac{q}{p-1}}ar^{-\mu}(y^*)'(\frac{1}{m}\log r)\frac{1}{r}.
\end{equation}
Substituting (\ref{S4C2E3-}) and (\ref{S4L1E1+}) into (\ref{S4C2E3}), we have (\ref{S4C2E2}).
\end{proof}

\begin{corollary}\label{S4C4}
Suppose that $p>p_{\rm S}$.
Let $U^*(\theta):=A(r)^{-\frac{N-2}{2}}u^*(r)$ and $r:=\tan\frac{\theta}{2}$.
Then, (\ref{CE0}) and the following hold:
\begin{equation}\label{S4C4E0}
(U^*)'(\theta)=a\left(\cos\frac{\theta}{2}\right)^{-N}\left(2\tan\frac{\theta}{2}\right)^{-\mu-1}\left(-\mu+(N-2)\left(\sin\frac{\theta}{2}\right)^2+o(1)\right)\ \ \textrm{as}\ \ \theta\downarrow 0.
\end{equation}
\end{corollary}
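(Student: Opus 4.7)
The plan is to rewrite $U^*$ directly in the variable $\theta$ via $r=\tan(\theta/2)$ and then read off both asymptotics from (\ref{S4C2E1})--(\ref{S4C2E2}). The two key geometric identities are
\[
A(r)=\frac{2}{1+r^2}=2\cos^2\!\frac{\theta}{2},\qquad A'(r)=-rA(r)^2,\qquad \frac{dr}{d\theta}=\frac{1}{2}\sec^2\!\frac{\theta}{2}=\frac{1}{A(r)},
\]
together with the algebraic identity
\[
\frac{q}{p-1}-\frac{N-2}{2}=-\mu,
\]
which I would verify once: expand $(p-p_{\rm S})/(p-1)=1-2\mu/(N-2)$ using $\mu=2/(p-1)$, then multiply by $(N-2)/2$.

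For (\ref{CE0}) I would substitute $u^*(r)=2^{q/(p-1)}ar^{-\mu}(1+o(1))$ from (\ref{S4C2E1}) into $U^*(\theta)=A(r)^{-(N-2)/2}u^*(r)$. The prefactor becomes $2^{q/(p-1)-(N-2)/2}=2^{-\mu}$ by the identity above, and $(\cos(\theta/2))^{-(N-2)}$ comes from $A(r)^{-(N-2)/2}$, so
\[
U^*(\theta)=a\left(\cos\tfrac{\theta}{2}\right)^{-(N-2)}\left(2\tan\tfrac{\theta}{2}\right)^{-\mu}(1+o(1)),
\]
giving (\ref{CE0}).

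For (\ref{S4C4E0}) I would differentiate $U^*=A^{-(N-2)/2}u^*$ in $\theta$. Using $A'(r)=-rA(r)^2$ and $dr/d\theta=1/A(r)$, the two $A$-factors collapse nicely and yield
\[
(U^*)'(\theta)=\frac{N-2}{2}\,r\,A(r)^{-(N-2)/2}u^*(r)+A(r)^{-(N-2)/2-1}(u^*)'(r).
\]
Plugging in (\ref{S4C2E1}) and (\ref{S4C2E2}) and using the identity $q/(p-1)-(N-2)/2=-\mu$ once more (shifted by $1$ for the second term), both terms share the common factor $a(\cos(\theta/2))^{-N}(2\tan(\theta/2))^{-\mu-1}$. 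A short bookkeeping of powers shows the first term produces $(N-2)\sin^2(\theta/2)$ (from $r\cos^2(\theta/2)\tan^{-\mu+1}$ relative to the common factor) and the second produces $-\mu$, so
\[
(U^*)'(\theta)=a\left(\cos\tfrac{\theta}{2}\right)^{-N}\left(2\tan\tfrac{\theta}{2}\right)^{-\mu-1}\Bigl(-\mu+(N-2)\sin^2\!\tfrac{\theta}{2}+o(1)\Bigr).
\]

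There is nothing conceptually hard here; the entire argument is substitution of the two asymptotic formulas for $u^*$ and $(u^*)'$ into the stereographic relations. The main obstacle is purely clerical: ensuring the powers of $2$, $\cos(\theta/2)$, $\sin(\theta/2)$, and $\tan(\theta/2)$ all match, and making sure the $o(1)$ errors from (\ref{S4C2E1})--(\ref{S4C2E2}) are absorbed correctly into a single $o(1)$ at the end (which is legitimate since all prefactors are bounded away from $0$ as $\theta\downarrow 0$, after one factors out the explicit singular term).
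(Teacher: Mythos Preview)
Your proposal is correct and follows essentially the same approach as the paper: differentiate $U^*=A^{-(N-2)/2}u^*$ via the chain rule $d/d\theta=(1/A(r))\,d/dr$ and then substitute the asymptotics (\ref{S4C2E1})--(\ref{S4C2E2}). Your version is actually more explicit than the paper's, which records only the intermediate formula for $(U^*)'(\theta)$ and then says ``substituting \ldots\ we obtain (\ref{S4C4E0})''; your identification of the identity $q/(p-1)-(N-2)/2=-\mu$ and the simplification $A'(r)=-rA(r)^2$ are exactly the bookkeeping steps the paper leaves implicit.
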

\begin{proof}
By direct calculation we have (\ref{CE0}).
We have
\begin{align}
\frac{d}{d\theta}U^*(\theta)&=\frac{1}{A(r)}\frac{d}{dr}\left(A(r)^{-\frac{N-2}{2}}u^*(r)\right)\nonumber\\
&=-\frac{N-2}{2}A(r)^{\frac{N-2}{2}}A'(r)u^*(r)+A(r)^{-\frac{N}{2}}(u^*)'(r).\label{S4C4E1}
\end{align}
Substituting (\ref{S4C2E1}) and (\ref{S4C2E2}) into (\ref{S4C4E1}), we obtain (\ref{S4C4E0}).
\end{proof}
Since $u^*(r)$ satisfies the equation in (\ref{D}), $U^*(\theta)$ satisfies the equation in (\ref{EFODE}).
Then the domain of $U^*(\theta)$ can be extended.
In the following lemma we show that $U^*(\theta)$ has the first positive zero, and hence, $U^*(\theta)$ is a singular solution of (\ref{EFODE}).
\begin{lemma}\label{S4L3}
Suppose that $p>p_{\rm S}$.
Let $U^*(\theta):=A(r)^{-\frac{N-2}{2}}u^*(r)$ and $r:=\tan\frac{\theta}{2}$.
Then $U^*(\theta)$ has the first positive zero $\Theta^*\in(0,\pi)$.
Hence, $(\Theta^*,U^*(\theta))$ is the singular solution of (\ref{EFODE}).
\end{lemma}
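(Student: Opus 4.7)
The plan is to mimic closely the contradiction argument used in Lemma~\ref{S3L2}, now with the initial data at $\theta \downarrow 0$ replaced by the singular asymptotics from Corollary~\ref{S4C4}. The key quantity is the weighted derivative $U^{*\prime}(\theta)\sin^{N-1}\theta$, which, by the equation in (\ref{EFODE}), is strictly monotone as long as $U^*>0$.

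First I would record that since $u^*$ solves the equation in (\ref{D}) on a right-neighborhood of $0$, the change of variables $U^*(\theta)=A(r)^{-(N-2)/2}u^*(r)$, $r=\tan\frac{\theta}{2}$ yields a $C^2$ solution of the equation in (\ref{EFODE}) on $(0,\theta_0)$ for some $\theta_0>0$, and that $U^*$ can be continued to its maximal existence interval by standard ODE theory. As in (\ref{S3L2E1}), on this interval the identity $(U^{*\prime}\sin^{N-1}\theta)'=-(U^*)^p\sin^{N-1}\theta$ holds whenever $U^*>0$. Using the asymptotics (\ref{CE0}) and (\ref{S4C4E0}) together with the fact that $p>p_{\rm S}$ forces $\mu=2/(p-1)<(N-2)/2$, a direct substitution shows that $U^{*\prime}(\theta)\sin^{N-1}\theta = O(\theta^{N-2-\mu})\to 0$ as $\theta\downarrow 0$; equivalently, the integral $\int_0^\theta (U^*)^p\sin^{N-1}\varphi\,d\varphi$ converges and
\[
U^{*\prime}(\theta)\sin^{N-1}\theta=-\int_0^\theta (U^*(\varphi))^p\sin^{N-1}\varphi\,d\varphi.
\]

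Now I argue by contradiction: suppose $U^*>0$ throughout $(0,\pi)$. Since the right-hand side above is strictly negative for any $\theta>0$, $U^{*\prime}(\theta)<0$ on $(0,\pi)$, and moreover $\theta\mapsto U^{*\prime}(\theta)\sin^{N-1}\theta$ is strictly decreasing. Fix any $\theta_1\in(0,\pi)$ and set $c_1:=-U^{*\prime}(\theta_1)\sin^{N-1}\theta_1>0$. Then for every $\theta>\theta_1$,
\[
U^{*\prime}(\theta)<-\frac{c_1}{\sin^{N-1}\theta},
\]
and integrating from $\theta_1$ to $\theta$ exactly as in the proof of Lemma~\ref{S3L2} gives
\[
U^*(\theta)\le U^*(\theta_1)-c_1\int_{\theta_1}^{\theta}\frac{d\varphi}{\sin^{N-1}\varphi}.
\]
Because $N\ge 3$, the integral diverges as $\theta\uparrow\pi$, forcing $U^*(\theta)\to-\infty$, contradicting positivity. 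Hence $U^*$ must vanish at some $\Theta^*\in(0,\pi)$, and we take $\Theta^*$ to be the first such zero; monotonicity of $U^*$ shows that $\Theta^*$ is indeed a simple zero and that $U^*>0$ on $(0,\Theta^*)$, so $(\Theta^*,U^*)$ is a singular solution of (\ref{EFODE}).

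The only nonroutine step is the boundary identity $U^{*\prime}(\theta)\sin^{N-1}\theta\to 0$ as $\theta\downarrow 0$; everything else is a mechanical transcription of the proof of Lemma~\ref{S3L2}. I expect the mild arithmetic around $\mu<(N-2)/2$ to be the main place where the hypothesis $p>p_{\rm S}$ is actually used (to ensure that the improper integral of $(U^*)^p\sin^{N-1}\varphi$ near $0$ converges, and thus that the weighted derivative vanishes at the origin).
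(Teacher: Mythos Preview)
Your proof is correct and follows essentially the same route as the paper. The paper likewise first verifies that $(U^*)'(\theta)\sin^{N-1}\theta\to 0$ as $\theta\downarrow 0$ via the asymptotics of Corollary~\ref{S4C4} and the inequality $N-2-\mu>0$ (which you obtain in the sharper form $\mu<(N-2)/2$ from $p>p_{\rm S}$), and then simply declares that the remainder of the argument is identical to that of Lemma~\ref{S3L2}; you have merely written that argument out in full, with the harmless simplification of using a single reference point $\theta_1$ in place of the pair $\theta_1<\theta_2$ used there.
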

\begin{proof}
%First, we see that $(-\mu-1)+(N-1)=N-2-\mu>0$, because $p>p_{\rm S}$.
%Using this inequality, we see that $(U^*)'(\theta)\sin^{N-1}\theta\to 0$ as $\theta\downarrow 0$.
First, we prove
\begin{equation}\label{S4L3E1}
(U^*)'(\theta)\sin^{N-1}\theta\to 0\ \ \textrm{as}\ \ \theta\downarrow 0.
\end{equation}
In fact, $(U^*)'(\theta)\sin^{N-1}\theta=O(\theta^{-\mu-1+N-1})$ and $-\mu-1+N-1=N-2-\frac{2}{p-1}>0$.
Hence, (\ref{S4L3E1}) holds.
Integrating (\ref{S3L2E1}) over $(0,\theta]$, we have (\ref{S3L2E1+}).
Hence, (\ref{S3L2E2}) holds.
The rest of the proof is the same as the proof of Lemma~\ref{S3L2}.
\end{proof}

\begin{proof}[Proof of Theorem~\ref{C}]
The singular solution $(\Theta^*,U^*(\theta))$ is established in Lemma~\ref{S4L3}, and (\ref{CE0}) is obtained in Corollary~\ref{S4C4}.
\end{proof}

\begin{remark}\label{rem1}
Let $(\Theta^*,U^*(\theta))$ be the singular solution of (\ref{EFODE}).
Let $u^*(r):=U^*(\theta)A(\tan\frac{\theta}{2})^{\frac{N-2}{2}}$, $r:=\tan\frac{\theta}{2}$, and $R^*:=\tan\frac{\Theta^*}{2}$.
Then $(R^*,u^*(r))$ is the singular solution of (\ref{D}).
\end{remark}

% **********************************************************
% **********************************************************
% **********************************************************
% Section 5
% **********************************************************
% **********************************************************
% **********************************************************
\section{Convergence to the singular solution as $\gamma\to\infty$}
Let $u(r,\gamma)$ be the solution of (\ref{DD}), and let $R(\gamma)$ be the first positive zero of $u(\,\cdot\,,\gamma)$.
Let $(R^*,u^*(r))$ be the singular solution of (\ref{D}) given in Remark~\ref{rem1}.
Our goal in this section is to prove the following:
\begin{lemma}\label{S5L2}
Suppose that $p>p_{\rm S}$.
Let $(R^*,u^*(r))$ be the singular solution given by Lemma~\ref{S4L3}.
As $\gamma\rightarrow\infty$,
\[
R(\gamma)\rightarrow R^*\ \ \textrm{and}\ \ u(r,\gamma)\rightarrow u^*(r)\ \textrm{in}\ C^2_{loc}(0,R^*].
\]
\end{lemma}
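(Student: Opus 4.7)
The plan is to relate $u(r,\gamma)$ to Euclidean Emden–Fowler solutions by an inner rescaling, and then invoke Proposition~\ref{S2P1}(ii) together with a stability argument at the equilibrium $(1,0)$ of (\ref{S2E3}). Concretely, set $\rho := \gamma^{(p-1)/2} r$ and $\bar u(\rho,\gamma) := \gamma^{-1} u(r,\gamma)$. Substitution into (\ref{DD}) yields
\[
\bar u'' + \frac{N-1}{\rho}\bar u' + \frac{N(N-2)}{4\gamma^{p-1}}A(r)^2\,\bar u + A(r)^{-q}\,|\bar u|^{p-1}\bar u = 0,\quad \bar u(0,\gamma)=1,\ \bar u'(0,\gamma)=0,
\]
with $r=\gamma^{-(p-1)/2}\rho$. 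Since $A(r)\to 2$ uniformly on compact subsets of $\rho\in[0,\infty)$, continuous dependence on parameters gives $\bar u(\cdot,\gamma)\to\bar w$ in $C^2_{\rm loc}([0,\infty))$, where $\bar w$ solves $\bar w''+(N-1)\rho^{-1}\bar w'+2^{-q}\bar w^p=0$ with $\bar w(0)=1$; the further rescaling $\tilde w := 2^{-q/(p-1)}\bar w$ is then a solution of (\ref{S2E1}) with $\bar\gamma = 2^{-q/(p-1)}$.

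\textbf{Step 2 (Matching via Proposition~\ref{S2P1}(ii)).} Put $T_\gamma := (p-1)/(2m)\cdot\log\gamma$. A direct computation from (\ref{y}) gives
\[
y(s-T_\gamma,\gamma) \,=\, 2^{-q/(p-1)}\,\bar u(\rho,\gamma)/\bar u^*(\rho),\qquad \rho=e^{ms},
\]
so Step~1 yields $y(s-T_\gamma,\gamma)\to \tilde w(\rho)/\bar u^*(\rho)=:\bar y(s)$ in $C^1_{\rm loc}$, and $\bar y$ satisfies (\ref{S2E3}). By Proposition~\ref{S2P1}(ii), $\bar y(s)\to 1$ and $\bar y'(s)\to 0$ as $s\to\infty$. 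Given $\varepsilon>0$, pick $s_0$ large and then $\gamma$ large so that, at the drifting time $t_1(\gamma):=s_0-T_\gamma\to -\infty$,
\[
|y(t_1(\gamma),\gamma)-1|+|y'(t_1(\gamma),\gamma)| < \varepsilon.
\]
By (\ref{S4L1E1+}) and a companion estimate for $(y^*)'$ obtained by differentiating the fixed-point integral in the proof of Lemma~\ref{S4L1}, the same bound holds for $y^*$ at $t_1(\gamma)$ once $\gamma$ is large enough.

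\textbf{Step 3 (Propagation and pullback).} Both $y(\cdot,\gamma)$ and $y^*$ satisfy (\ref{S4E1}); the difference $v := y(\cdot,\gamma)-y^*$ solves a linear ODE whose coefficients converge, as $t\to -\infty$, to those of $v''+\alpha v'+(p-1)v=0$ (the linearization of (\ref{S2E3}) at $(1,0)$). Since $p>p_{\rm S}$ forces $\alpha>0$, both eigenvalues of this limit linearization have strictly negative real parts, and standard exponential dichotomy theory provides a bound on the fundamental matrix on any forward interval $[t_1,t_0]$ that is uniform in $t_1\to -\infty$. Combined with perturbative control of the vanishing terms $B_0(t),B_1(t)$ and of the nonlinear correction (both small near $(1,0)$), this delivers $|v(t_0)|+|v'(t_0)|=O(\varepsilon)$, hence $y(t_0,\gamma)\to y^*(t_0)$ and $y'(t_0,\gamma)\to(y^*)'(t_0)$ for each fixed $t_0$. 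Inverting (\ref{y}) and using the ODE promotes this to $u(\cdot,\gamma)\to u^*$ in $C^2_{\rm loc}(0,\infty)$, in particular in $C^2_{\rm loc}(0,R^*]$. Since $u^*(R^*)=0$ with $(u^*)'(R^*)<0$ (by Lemma~\ref{S4L3} together with (\ref{S3L2E2}) applied to $u^*$), this simple zero is preserved under $C^1$-perturbation while $u(\cdot,\gamma)$ stays positive on compact subsets of $(0,R^*)$, forcing $R(\gamma)\to R^*$.

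\textbf{Main obstacle.} The delicate point is the propagation in Step~3: the comparison interval $[t_1(\gamma),t_0]$ has its left endpoint drifting to $-\infty$, so a naive Gronwall estimate degrades with the interval length. The resolution relies on the asymptotic stability of the equilibrium $(1,0)$ of the limit autonomous system (\ref{S2E3}), which is equivalent to $\alpha>0$ and thus to the standing hypothesis $p>p_{\rm S}$.
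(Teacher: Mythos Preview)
Your argument is correct and follows the same overall architecture as the paper: (a) an inner rescaling that sends the problem to the Euclidean Emden--Fowler equation (your Steps~1--2 correspond to the paper's Lemma~\ref{S5L3}); (b) a propagation step carrying the closeness to the equilibrium $(1,0)$ from the drifting time $t_1(\gamma)\to-\infty$ up to a fixed time (your Step~3 vs.\ the paper's Lemma~\ref{S5L4}); (c) continuous dependence for the ODE to conclude $C^2_{\rm loc}$ convergence and $R(\gamma)\to R^*$.

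The genuine difference is in step~(b). You linearize (\ref{S4E1}) along $y^*$ and appeal to exponential dichotomy: the limiting linear operator at $-\infty$ is $v''+\alpha v'+(p-1)v$, whose eigenvalues have negative real part precisely because $\alpha>0$ (i.e., $p>p_{\rm S}$), so the forward fundamental matrix is uniformly bounded on $[t_1,t_0]$; the nonlinear remainder is then handled by a standard bootstrap near $(1,0)$. The paper instead avoids any linearization by using a nonautonomous Lyapunov function
\[
E(y,z,t)=H(y,z)+B_0(t)\frac{y^{p+1}}{p+1}+B_1(t)\frac{y^2}{2},
\]
whose derivative along (\ref{S5E3}) is controlled (dropping $-\alpha(y')^2\le 0$) by $B_0',B_1'\ge 0$ on $(-\infty,0)$; this traps the orbit in the sublevel set $\Omega_{2\varepsilon}$ uniformly on $[-t_0,T_\varepsilon]$ (Lemma~\ref{S5L4}(ii)), after which an Arzel\`a--Ascoli argument and the uniqueness from Lemma~\ref{S4L2} identify the limit with $y^*$. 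The Lyapunov approach has the advantage of giving a priori boundedness of $y$ without the implicit bootstrap you need to justify that the linearization stays close to its $t\to-\infty$ limit; your approach is shorter once one accepts the standard dichotomy/continuation machinery, and makes the role of $p>p_{\rm S}$ (i.e., $\alpha>0$) very transparent. One minor remark: in Step~3 the terms $B_0,B_1$ need not be ``small'' on all of $[t_1,t_0]$; what matters is that they enter both equations identically and hence are absorbed in the linearization coefficient, with Gronwall handling the fixed compact tail $[T,t_0]$.
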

We postpone the proof of Lemma~\ref{S5L2}.
Let $y(t)$ be defined as (\ref{y}).
Then (\ref{DD}) is equivalent to the problem
\begin{equation}\label{S5E1}
\begin{cases}
y''+\alpha y'-y+y^p+B_0(t)y^p+B_1(t)y=0,& -\infty<t<t_{\Theta},\\
2^{\frac{q}{p-1}}ae^{-m\mu t}y(t)\rightarrow\gamma\ \textrm{as}\ t\rightarrow -\infty,\\
e^{-mt}(e^{-m\mu t}y(t))'\rightarrow 0\ \textrm{as}\ t\rightarrow -\infty,
\end{cases}
\end{equation}
where $t_{\Theta}:=\frac{1}{m}\log\tan\frac{\Theta}{2}$.
We define
\[
s:=t+\frac{\log\gamma}{m\mu}\ \ \textrm{and}\ \ \hat{y}(s):=y(t).
\]
Then (\ref{S5E1}) becomes
\begin{equation}\label{S5E2}
\begin{cases}
\hat{y}''+\alpha\hat{y}'-\hat{y}+\hat{y}^p+B_0(s-\frac{\log\gamma}{m\mu })\hat{y}^p+B_1(s-\frac{\log\gamma}{m\mu})\hat{y}=0, & -\infty<s<t_{\Theta}+\frac{\log\gamma}{m\mu},\\
2^{\frac{q}{p-1}}ae^{-m\mu s}\hat{y}(s)\rightarrow 1\ \textrm{as}\ s\rightarrow -\infty,\\
e^{-ms}(e^{-m\mu s}\hat{y}(s))'\rightarrow 0\ \textrm{as}\ s\rightarrow -\infty.
\end{cases}
\end{equation}
For each fixed $s$, as $\gamma\rightarrow\infty$, $B_0(s-\frac{\log\gamma}{m\mu })\rightarrow 0$ and $B_1(s-\frac{\log\gamma}{m\mu})\rightarrow 0$.
Therefore, we expect that $\hat{y}(s)$ converges to the solution of (\ref{S2E2}) in a certain sense.
\begin{lemma}\label{S5L3}
Suppose that $p>p_{\rm S}$.
Let $\bar{y}(s)$ be the solution of (\ref{S2E2}) with $\bar{\gamma}:=2^{-\frac{q}{p-1}}$.
For each $s_0\in\R$, as $\gamma\rightarrow\infty$,
\begin{align*}
\hat{y}(s)\rightarrow\bar{y}(s)\ \textrm{uniformly in}\ s\in(-\infty,s_0]\ \ \textrm{and}\ \ 
\hat{y}'(s)\rightarrow\bar{y}'(s)\ \textrm{uniformly in}\ s\in(-\infty,s_0].
\end{align*}
\end{lemma}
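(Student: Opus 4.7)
The plan is a two-stage argument. I would split $(-\infty,s_0]$ into an infinite tail $(-\infty,s_1]$ for $s_1$ sufficiently negative (to be chosen independent of $\gamma$), where the convergence follows from a contraction/continuous-dependence argument on the unstable manifold of the equilibrium $(0,0)$ of the autonomous limit, and a compact piece $[s_1,s_0]$, where standard continuous dependence on ODE parameters handles the rest.

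For the tail I would exploit that both $\hat y$ and $\bar y$ come out of $(0,0)$ at $s=-\infty$ with identical leading asymptotics. The characteristic equation $\lambda^2+\alpha\lambda-1=0$ of the autonomous linearization at $y=0$ has $m\mu$ as its positive root, as one checks directly from $m^2\mu(N-2-\mu)=1$; the asymptotic conditions in (\ref{S5E2}) and (\ref{S2E2}) both prescribe the same leading coefficient $\bar\gamma/a$ along this unstable direction. Writing $y(s)=(\bar\gamma/a)\,e^{m\mu s}+w(s)$ and inverting $L:=\partial_s^2+\alpha\partial_s-1$ via the Green's function that decays at $-\infty$, each of (\ref{S5E2}) and (\ref{S2E2}) becomes a fixed-point equation $w=\mathcal{T}_\gamma w$ (respectively $w=\mathcal{T}_\infty w$) in a weighted space
\begin{equation*}
X_\delta=\Bigl\{w\in C^0((-\infty,s_1])\colon \sup_{s\le s_1}e^{-\lambda s}|w(s)|\le\delta\Bigr\}
\end{equation*}
for a suitable $\lambda>m\mu$. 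Since $|B_0(\tau)|+|B_1(\tau)|=O(e^{2m\tau})$ as $\tau\to-\infty$, the shift in the argument by $-\log\gamma/(m\mu)$ produces a factor of order $\gamma^{-2/\mu}$, which tends to zero. A contraction argument analogous to Lemma~\ref{S4L2} then yields a unique fixed point in $X_\delta$ depending continuously on $\gamma\in(0,\infty]$, and hence $|\hat y(s)-\bar y(s)|+|\hat y'(s)-\bar y'(s)|<\e$ for all $s\le s_1$, uniformly in $\gamma$ large.

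On the compact interval $[s_1,s_0]$, the $(y,y')$-system for $\hat y$ differs from that for $\bar y$ only through the coefficients $B_0(s-\tfrac{\log\gamma}{m\mu})$ and $B_1(s-\tfrac{\log\gamma}{m\mu})$, both of which vanish uniformly on $[s_1,s_0]$ as $\gamma\to\infty$. Using the tail estimate to choose initial data at $s=s_1$ close to $(\bar y(s_1),\bar y'(s_1))$, Gronwall's inequality applied to $w=\hat y-\bar y$ (or equivalently the standard continuous-dependence theorem for ODEs on parameters) yields uniform $C^1$-convergence on $[s_1,s_0]$. Patching the tail and interior estimates completes the proof.

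The main obstacle is the tail step: one must choose the weight $\lambda$ so that the contraction constants and the dependence of the fixed point on $\gamma$ are simultaneously uniform. The key inputs are the spectral gap between the positive eigenvalue $m\mu$ and the exponential decay rate $2m$ of the perturbation coefficients $B_0,B_1$, together with the hyperbolicity of $(0,0)$, which follows from $\alpha>0$ (i.e.\ $p>p_{\rm S}$). A subtle point is controlling the nonlinearity $y^p$ in the weighted norm, which forces $s_1$ to be taken sufficiently negative so that the amplitude $(\bar\gamma/a)e^{m\mu s_1}$ is small enough for the $y^p$-contribution to $\mathcal{T}_\gamma$ to respect the contraction.
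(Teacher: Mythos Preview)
Your approach is correct but genuinely different from the paper's. The paper does not set up a weighted fixed-point problem on the tail; instead it obtains the uniform-in-$\gamma$ a priori bound $\hat y(s)<(\bar\gamma/a)e^{m\mu s}$ directly from the differential inequality $\bigl(e^{m(N-2-\mu)t}(y'-m\mu y)\bigr)'<0$ together with the asymptotic condition at $-\infty$, derives a matching pointwise bound on $\hat y'$ from an integral representation, and then runs Arzel\`a--Ascoli on compact subintervals, with the exponential bound upgrading subsequential $C^0$-convergence on $[s_1,s_0]$ to uniform convergence on all of $(-\infty,s_0]$; the limit is then identified with $\bar y$ by passing to the limit in the equation and initial conditions. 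The paper's route is more elementary---a single monotonicity trick replaces all of your weighted-space machinery---while your unstable-manifold contraction is more conceptual, exploits the hyperbolicity of $(0,0)$ explicitly, and would yield a quantitative rate of order $\gamma^{-2/\mu}$. One point in your sketch that deserves a sentence: you should say why the IVP-derived function $\hat y$ coincides with your fixed point in $X_\delta$; this is the uniqueness half of the unstable-manifold construction (any solution with the prescribed leading asymptotic $(\bar\gamma/a)e^{m\mu s}$ and remainder in $X_\delta$ is the fixed point), and while it is implicit in the contraction, it is the hinge between the analytic and dynamical pictures.
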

\begin{proof}
Multiplying the equation in (\ref{S5E1}) by $e^{m(N-2-\mu)t}$, we have
\[
\left\{ e^{m(N-2-\mu)t}(y'-m\mu y)\right\}'=-e^{m(N-2-\mu)t}(y^p+B_0(t)y^p+B_1(t)y)<0,
\]
where we use $m^2\mu(N-2-\mu)=1$.
Since
\begin{equation}\label{S5L3E1}
e^{m(N-2-\mu)t}(y'-m\mu y)\rightarrow 0\ \textrm{as}\ t\rightarrow -\infty,
\end{equation}
we see that $y'-m\mu y<0$. Since
\[
2^{\frac{q}{p-1}}ae^{-m\mu t}y(t)\rightarrow\gamma\ \textrm{as}\ t\rightarrow -\infty,
\]
we have
\[
y(t)<2^{-\frac{q}{p-1}}a^{-1}\gamma e^{m\mu t}=2^{-\frac{q}{p-1}}a^{-1}e^{m\mu s}.
\]
Since $\hat{y}(s)=y(t)$,
\begin{equation}\label{S5L3E2}
\hat{y}(s)<2^{-\frac{q}{p-1}}a^{-1}e^{m\mu s}.
\end{equation}
Multiplying the equation in (\ref{S5E2}) by $e^{m(N-2-\mu)s}$, we have
\begin{equation}\label{S5L3E3}
\left\{e^{m(N-2-\mu)s}(\hat{y}'-m\mu\hat{y})\right\}'=-e^{m(N-2-\mu)s}(\hat{y}^p+\widehat{B}_0(s)\hat{y}^p+\whB_1(s)\hat{y}),
\end{equation}
where $\whB_0(s):=B_0(s-\frac{\log\gamma}{m\mu})$ and $\whB_1(s):=B_1(s-\frac{\log\gamma}{m\mu})$.
Integrating (\ref{S5L3E3}) and solving it for $\hat{y}'$, we have
\[
\hat{y}'(s)=m\mu\hat{y}(s)-e^{-m(N-2-\mu)s}\int_{-\infty}^s\left(\hat{y}(\tau)^p+\whB_0(\tau)\hat{y}(\tau)^p+\whB_1(\tau)\hat{y}(\tau)\right)e^{m(N-2-\mu)\tau}d\tau,
\]
where we use (\ref{S5L3E1}).
Using (\ref{S5L3E2}), we have $|\hat{y}(\tau)^p+\whB_0(\tau)\hat{y}(\tau)^p+\whB_1(\tau)\hat{y}(\tau)|\le C_0e^{m\mu\tau}$, and there holds
\begin{align}
|\hat{y}'(s)|&\le m\mu|\hat{y}(s)|+e^{-m(N-2-\mu)s}\int_{-\infty}^sC_0e^{m\mu\tau}e^{m(N-2-\mu)\tau}d\tau\nonumber\\
&=m\mu|\hat{y}(s)|+\frac{C_0}{m(N-2)}e^{m\mu s}\label{S5L3E4}\\
&\le C_1e^{m\mu s}.\nonumber
\end{align}
Therefore, $\{\hat{y}(s)\}_{\gamma}$ is equicontinuous on $(-\infty,s_0]$.
It follows from the Arzel\`a-Ascoli theorem that for each fixed $s_1\in (-\infty,s_0]$, as $\gamma\rightarrow\infty$, $\hat{y}(s)$ uniformly converges to a certain function $\hat{y}_0(s)$ on $[s_1,s_0]$.
Because of (\ref{S5L3E2}), this convergence is uniform on $(-\infty,s_0]$.
By (\ref{S5L3E4}) we see that $\{\hat{y}'(s)\}_{\gamma}$ is bounded on $(-\infty,s_0]$.
Because of (\ref{S5E2}), $\{\hat{y}''(s)\}_{\gamma}$ is also bounded on $(-\infty,s_0]$.
By the same argument as before we see that as $\gamma\rightarrow\infty$, $\hat{y}'(s)$ converges to a certain function $\hat{y}_1(s)$ on $(-\infty,s_0]$.
Taking the limit of $y(s)=\int_{-\infty}^sy'(\tau)d\tau$, we see that $\hat{y}_0(s)=\int_{-\infty}^s\hat{y}_1(\tau)d\tau$, where by (\ref{S5L3E4}) we can use the dominated convergence theorem.
Hence, $\hat{y}_0(s)$ is of class $C^1$ and $\hat{y}_1=\hat{y}_0'$.
By (\ref{S5E2}) we see that $\hat{y}''(s)$ also converges to a certain function $\hat{y}_2(s)$ on $(-\infty,s_0]$.
By the same argument as before, we see that $\hat{y}_2=\hat{y}_1'(=\hat{y}_0'')$.
Taking the limit of (\ref{S5E2}), we see that $\hat{y}_0(s)$ satisfies (\ref{S2E2}) with $\bar{\gamma}=2^{-\frac{q}{p-1}}$.
Thus $\hat{y}_0=\bar{y}$.
We obtain the conclusion.
\end{proof}
Let $z(t,\gamma):=y_t(t,\gamma)$.
Then $(y,z)$ satisfies
\begin{equation}\label{S5E3}
\begin{cases}
y'=z\\
z'=-\alpha z+y-y^p-B_0(t)y^p-B_1(t)y.
\end{cases}
\end{equation}
Proposition~\ref{S2P1} (i) says that $(\bar{y}(s),\bar{z}(s))$ converges to $(1,0)$ if $p>p_{\rm S}$.
This fact and Lemma~\ref{S5L3} indicate that $(y(t,\gamma),z(t,\gamma))$ approaches to $(1,0)$ as $\gamma\rightarrow\infty$ along $t=s_0-\frac{\log\gamma}{m\mu}$ provided that $s_0$ is chosen large enough.

\begin{lemma}\label{S5L4}
Suppose that $p>p_{\rm S}$.
Let
\[
H(y,z):=\frac{z^2}{2}-\frac{y^2-1}{2}+\frac{y^{p+1}-1}{p+1},
\]
and let $\Omega_{\e}:=\{(y,z)\in\R^2;\ H(y,z)<\e,\ y>0\}$.
Then the following hold:\\
(i) Let $\e>0$ be fixed. For each large $t_0>0$, $(y(-t_0,\gamma),z(-t_0,\gamma))\in\Omega_{\e}$ provided that $\gamma>0$ is large.\\
(ii) If $(y(-t_0,\gamma),z(-t_0,\gamma))\in\Omega_{\e}$, then there is $T_{\e}<0$ independent of $t_0$ such that $(y(t,\gamma),z(t,\gamma))\in\Omega_{2\e}$ for $t\in[-t_0,T_{\e}]$.
\end{lemma}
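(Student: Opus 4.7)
The plan is to prove both parts by a bootstrap argument for the function $H$ along trajectories of (\ref{S5E3}). Three ingredients are used. First, $\Omega_\varepsilon$ is bounded: writing $H(y,z)=\tfrac{z^2}{2}+f(y)$ with $f(y):=-\tfrac{y^2-1}{2}+\tfrac{y^{p+1}-1}{p+1}$, one checks that $f(y)\to\infty$ as $y\to\infty$ and $f$ is bounded below on $(0,\infty)$, so there exists $M_\varepsilon>0$ with $|y|+|z|\le M_\varepsilon$ on $\overline{\Omega_\varepsilon}$. Second, a direct computation from (\ref{S5E3}) yields the energy identity
\[
\frac{d}{dt}H(y(t),z(t)) \;=\; -\alpha\, z(t)^2 \;-\; \bigl[B_0(t)y(t)^p+B_1(t)y(t)\bigr]z(t),
\]
so that on $\overline{\Omega_\varepsilon}$ one has $\bigl|\tfrac{d}{dt}H\bigr|\le C_\varepsilon(B_0(t)+B_1(t))$ with $C_\varepsilon:=M_\varepsilon^{p+1}+M_\varepsilon^2$. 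Third, by (\ref{S4E1+}), $B_0(t),B_1(t)=O(e^{2mt})$ as $t\to-\infty$, so the tail $\int_{-\infty}^{t_*}(B_0+B_1)\,d\tau\to 0$ as $t_*\to-\infty$.

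For (i), I would first pick $s_0$ large enough that $H(\bar y(s_0),\bar z(s_0))<\varepsilon/4$, which is possible by Proposition~\ref{S2P1}(ii); then pick $t_0$ large enough that $C_\varepsilon\int_{-\infty}^{-t_0}(B_0+B_1)\,d\tau<\varepsilon/4$; finally take $\gamma$ large enough that both (a) by Lemma~\ref{S5L3} and continuity of $H$, $H(\hat y(s_0),\hat y'(s_0))<\varepsilon/2$, and (b) $t_0':=s_0-(m\mu)^{-1}\log\gamma\le -t_0$. Translating (a) back to the $t$-variable gives $H(y(t_0'),z(t_0'))<\varepsilon/2$. A bootstrap then finishes the proof: if there were a first exit time $t^\sharp\in(t_0',-t_0]$ of $(y,z)$ from $\Omega_\varepsilon$, then $(y,z)\in\overline{\Omega_\varepsilon}$ on $[t_0',t^\sharp]$ and integrating the energy bound gives
\[
H(y(t^\sharp),z(t^\sharp))\;\le\;\tfrac{\varepsilon}{2}+C_\varepsilon\!\!\int_{-\infty}^{-t_0}\!(B_0+B_1)\,d\tau\;<\;\tfrac{\varepsilon}{2}+\tfrac{\varepsilon}{4}\;<\;\varepsilon,
\]
contradicting $H(y(t^\sharp),z(t^\sharp))=\varepsilon$. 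Hence $(y,z)\in\Omega_\varepsilon$ throughout $[t_0',-t_0]$, in particular at $t=-t_0$.

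For (ii), I would choose $T_\varepsilon<0$, depending only on $\varepsilon$, so that $C_{2\varepsilon}\int_{-\infty}^{T_\varepsilon}(B_0+B_1)\,d\tau<\varepsilon/2$; such a $T_\varepsilon$ exists by the tail property. Starting from the hypothesis $H(y(-t_0),z(-t_0))<\varepsilon$, an analogous bootstrap for exit from $\Omega_{2\varepsilon}$: a first exit at some $t^\sharp\in(-t_0,T_\varepsilon]$ would yield $H(y(t^\sharp),z(t^\sharp))\le\varepsilon+\varepsilon/2<2\varepsilon$, contradicting $H(y(t^\sharp),z(t^\sharp))=2\varepsilon$. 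Since $T_\varepsilon$ was fixed purely from $\varepsilon$ and the integrability of the perturbations, it does not depend on $t_0$, as required.

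The principal difficulty lies in (i): the target time $-t_0$ is forward in time relative to the window $(-\infty,s_0]$ on which Lemma~\ref{S5L3} provides uniform convergence of $\hat y$ to $\bar y$. I sidestep this by obtaining the foothold not at $-t_0$ but at the past time $t_0'=s_0-(m\mu)^{-1}\log\gamma$, which tends to $-\infty$ as $\gamma\to\infty$, and then propagating forward across the long interval $[t_0',-t_0]$. This propagation is cheap because the $B_0,B_1$ perturbations contribute only the small tail $\int_{-\infty}^{-t_0}(B_0+B_1)\,d\tau$. A secondary technical point is verifying boundedness of $\Omega_\varepsilon$, which is what makes the constants $M_\varepsilon$ and $C_\varepsilon$ finite.
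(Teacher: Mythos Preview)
Your argument is correct and close in spirit to the paper's, but the execution differs in two places worth flagging.

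For (ii), the paper introduces the modified energy $E(y,z,t):=H(y,z)+B_0(t)\tfrac{y^{p+1}}{p+1}+B_1(t)\tfrac{y^2}{2}$ and computes $\tfrac{d}{dt}E=-\alpha z^2+B_0'(t)\tfrac{y^{p+1}}{p+1}+B_1'(t)\tfrac{y^2}{2}$; since $B_0',B_1'>0$ on $(-\infty,0)$, integrating telescopes to the \emph{pointwise} values $B_0(T_\varepsilon),B_1(T_\varepsilon)$ rather than a tail integral. Your direct use of $H$ together with the integrability of $B_0+B_1$ near $-\infty$ achieves the same end and is equally valid. Two small corrections: the bound $\bigl|\tfrac{d}{dt}H\bigr|\le C_\varepsilon(B_0+B_1)$ is false because of the $-\alpha z^2$ term, but only the \emph{upper} bound $\tfrac{d}{dt}H\le C_\varepsilon(B_0+B_1)$ is needed for your bootstrap, and that does hold on $\overline{\Omega_{2\varepsilon}}$. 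You should also restrict to $\varepsilon$ small enough that $\overline{\Omega_{2\varepsilon}}$ is bounded away from $\{y=0\}$ (equivalently $2\varepsilon<H(0,0)=\tfrac{p-1}{2(p+1)}$), so that exit can only occur through the level set $\{H=2\varepsilon\}$; the paper imposes the equivalent hypothesis $\Omega_{2\varepsilon}\subset\{0\le y\le\xi\}$.

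For (i), the paper argues in one line by writing $y(-t_0,\gamma)=\hat y(s_\gamma,\gamma)$ with $s_\gamma=-t_0+(m\mu)^{-1}\log\gamma$ and asserting $\hat y(s_\gamma,\gamma)\to\bar y(s_\gamma)$; since $s_\gamma\to\infty$ this is not literally contained in the statement of Lemma~\ref{S5L3}. Your route---obtaining a foothold at the past time $t_0'=s_0-(m\mu)^{-1}\log\gamma$, where Lemma~\ref{S5L3} applies directly, and then propagating forward to $-t_0$ by the same energy bootstrap as in (ii)---is a more careful way to make this step rigorous, and it makes transparent why ``$t_0$ large'' is needed (precisely so that the tail $\int_{-\infty}^{-t_0}(B_0+B_1)\,d\tau$ is small).
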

\begin{proof}
Because of Lemma~\ref{S5L3}, for each $t_0$, as $\gamma\rightarrow\infty$,
\[
y(-t_0)=\hat{y}(s)\rightarrow\bar{y}(s)=\bar{y}(-t_0+\frac{\log\gamma}{m\mu}),
\]
where $s=-t_0+\frac{\log\gamma}{m\mu}$.
We similarly see that $z(-t_0)\rightarrow\bar{z}(-t_0+\frac{\log\gamma}{m\mu})$.
Since $(\bar{y},\bar{z})$ converges to $(1,0)$ and $\Omega_{\e}$ is a neighborhood of $(1,0)$, (i) holds.

We define $E(y,z,t)$ by
\begin{equation}\label{S5L4E0}
E(y,z,t):=H(y,z)+B_0(t)\frac{y^{p+1}}{p+1}+B_1(t)\frac{y^2}{2}.
\end{equation}
Let $y(t)$ be the solution of (\ref{S5E1}).
By direct calculation we have
\begin{equation}\label{S5L4E1}
\frac{d}{dt}E(y(t),z(t),t)=-\alpha y'(t)^2+B_0'(t)\frac{y(t)^{p+1}}{p+1}+B_1'(t)\frac{y(t)^2}{2}.
\end{equation}
Let $\xi:=\left(\frac{p+1}{2}\right)^{\frac{1}{p-1}}$.
Let $\e>0$ be small such that $\Omega_{2\e}\subset\{0\le y\le\xi\}$.
We can choose $T<0$ such that
\begin{equation}\label{S5L4E2}
B_0(T)\frac{\xi^{p+1}}{p+1}<\frac{\e}{8}\ \ \textrm{and}\ \ B_1(T)\frac{\xi^2}{2}<\frac{\e}{8}.
\end{equation}
We show that $(y(t),z(t))\in\Omega_{2\e}$ for $t\in[-t_0,T]$ if $(y(-t_0,\gamma),y_t(-t_0,\gamma))\in\Omega_{\e}$.
Suppose the contrary, i.e., we assume that
\begin{equation}\label{S5L4E2+}
(y(t),z(t))\in\Omega_{2\e}\ (-t_0\le t<T)\ \ \textrm{and}\ \ (y(T),z(T))\not\in\Omega_{2\e}.
\end{equation}
Integrating (\ref{S5L4E1}) over $[-t_0,T]$, we have
\begin{align}
E(y(T),z(T),T)-& E(y(-t_0),z(-t_0),-t_0)\nonumber\\
&\le\int_{-t_0}^T\left(B_0'(t)\frac{y(t)^{p+1}}{p+1}+B_1'(t)\frac{y(t)^2}{2}\right)dt\nonumber\\
&\le\frac{\xi^{p+1}}{p+1}\int_{-t_0}^TB_0'(t)dt+\frac{\xi^2}{2}\int_{-t_0}^TB_1'(t)dt\nonumber\\
&\le\frac{\xi^{p+1}}{p+1}B_0(T)+\frac{\xi^2}{2}B_1(T)\nonumber\\
&\le\frac{\e}{8}+\frac{\e}{8}=\frac{\e}{4},\label{S5L4E3}
\end{align}
where we use (\ref{S5L4E2}) and the two inequalities
\begin{align*}
B_0'(t)&=2mq(1+e^{2mt})^{q-1}e^{2mt}>0\ \ \textrm{for}\ \ t\in\R,\ \textrm{and},\\
B_1'(t)&=\frac{2mN(N-2)(1-e^{2mt})e^{2mt}}{(1+e^{2mt})^3}>0\ \ \textrm{for}\ \ t<0.
\end{align*}
Using (\ref{S5L4E3}) and (\ref{S5L4E0}), we have
\begin{align*}
H(y(T),z(T))&\le H(y(-t_0),z(-t_0))+B_0(-t_0)\frac{y(-t_0)^{p+1}}{p+1}+B_1(-t_0)\frac{y(-t_0)^2}{2}\\
&\quad-\left(B_0(T)\frac{y(T)^{p+1}}{p+1}+B_1(T)\frac{y(T)^2}{2}\right)+\frac{\e}{4}\\
&\le \e+\frac{\e}{8}+\frac{\e}{8}+\frac{\e}{4}=\frac{3}{2}\e.
\end{align*}
Hence, $(y(T),z(T))\in\Omega_{3\e/2}\subset\Omega_{2\e}$, which contradicts (\ref{S5L4E2+}).
The proof of (ii) is complete.
\end{proof}

\begin{proof}[Proof of Lemma~\ref{S5L2}]
Let $\{\gamma_n\}_{n=1}^{\infty}$ be a sequence diverging to $\infty$.
Let $y_n:=y(t,\gamma_n)$ be the solution of (\ref{S5E1}), and let $z_n:=y_n'$.
We fix $\e>0$.
By Lemma~\ref{S5L4} (i) we see that for arbitrary large $t_0>0$, $(y_n(-t_0),z_n(-t_0))\in\Omega_{\e}$ provided that $n$ is large.
Because of Lemma~\ref{S5L4} (ii), there is $T<0$ such that $(y_n(t),z_n(t))\in\Omega_{2\e}$ for $t\in [-t_0,T]$.
Thus, $\{(y_n(t),z_n(t))\}$ is bounded in $(C^0[-t_0,T])^2$.
It follows from the equation in (\ref{S5E1}) that $\{y_n''(t)\}$ is bounded in $C^0[-t_0,T]$.
Differentiating the equation in (\ref{S5E1}), we see that $\{z_n''(t)\}$ is also bounded in $C^0[-t_0,T]$.
Thus by the Ascoli-Arzel\`a theorem we see that $\{(y_n,z_n)\}$ converges to some pair of functions $(y_*(t),z_*(t))$ in $(C^1[-t_0,T])^2$.
Since $(y_n,z_n)$ satisfies the equation in (\ref{S5E3}), $(y_*,z_*)$ satisfies the same equation.
Next, we prove $y_*=y^*$, where $y^*$ is the solution of (\ref{S4L1E1}).
If $y_*=y^*$, then $y_n\rightarrow y^*$ in $C^2[-t_0,T]$, and $u\rightarrow u^*$ in $C^2(I)$ for some interval $I$.
Let $r_0\in I$ be fixed.
Because of the continuous dependence of $u$ in $C^2_{loc}(0,R^*]$ with respect to $(u(r_0),u'(r_0))$, $u\rightarrow u^*$ in $C^2_{loc}(0,R^*]$.
Moreover, $R(\gamma)\rightarrow R^*$.
By Lemma~\ref{S4L2} it suffices to show that
%In order to show that $y_*$ is the solution of (\ref{S4L1E1}) we have to prove
\begin{equation}\label{S5L2E0}
y_*(t)\rightarrow 1\ \textrm{as}\ t\rightarrow -\infty.
\end{equation}
%If (\ref{S5L2E0}) holds, then by the uniqueness of the solution of (\ref{S4L1E1}) we see that $y_*=y^*$, where $y^*$ is the solution of (\ref{S4L1E1}).
%Hence, 

We prove (\ref{S5L2E0}) by contradiction.
Suppose the contrary, i.e., there is a sequence $\{t_k\}$ such that $t_k\rightarrow -\infty$ and $(y_*(t_k),z_*(t_k))\not\in\Omega_{\delta}$ for all $k\ge 1$.
We choose $\e=\delta/4$.
By Lemma~\ref{S5L3} for each large $s_0>0$, if $\gamma$ is large, then $(\hat{y}(s_0,\gamma),\hat{y}_s(s_0,\gamma))\in\Omega_{\e}$.
Since $\hat{y}(s_0,\gamma_n)=y(t,\gamma_n)=y_n(s_0-\frac{\log\gamma_n}{m\mu})$ and $\hat{y}_s(s_0,\gamma_n)=y_t(t,\gamma_n)=z_n(s_0-\frac{\log\gamma_n}{m\mu})$, $(y_n(s_0-\frac{\log\gamma_n}{m\mu}),z_n(s_0-\frac{\log\gamma_n}{m\mu}))\in\Omega_{\e}$ provided that $n$ is large.
By Lemma~\ref{S5L4} (ii) we see that
\[
(y_n(t),z_n(t))\in\Omega_{2\e}\subset\Omega_{\delta}\ \ \textrm{for}\ \ t\in[s_0-\frac{\log\gamma_n}{m\mu},T_{\e}],
\]
where $T_{\e}$ is independent of $n$.
Since $s_0-\frac{\log\gamma_n}{m\mu}\to -\infty$ $(n\to\infty)$, we can choose $n$ such that $[s_0-\frac{\log\gamma_n}{m\mu},T_{\e}]$ includes an element of $\{t_k\}$.
We obtain a contradiction.
\end{proof}

% **********************************************************
% **********************************************************
% **********************************************************
% Section 6
% **********************************************************
% **********************************************************
% **********************************************************
\section{Uniqueness of a small solution}
Let $u(r,\gamma)$ be the solutions of (\ref{DD}), and let $R(\gamma)$ be the first positive zero of $u(\,\cdot\,,\gamma)$.
\begin{lemma}\label{S6L1}
Suppose that $p>1$. Then
\[
R(\gamma)\rightarrow\infty\ \textrm{as}\ \gamma\downarrow 0.
\]
\end{lemma}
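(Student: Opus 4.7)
My plan is to rescale $u$ by its initial value so that the equation becomes a small perturbation of a linear ODE whose positive solution is completely explicit and strictly positive on $[0,\infty)$. Continuous dependence of this rescaled equation on the parameter $\gamma$ will then force the first zero $R(\gamma)$ to exceed any prescribed bound for $\gamma$ sufficiently small.

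First, I would set $v(r,\gamma):=u(r,\gamma)/\gamma$, so that dividing the equation in (\ref{DD}) by $\gamma$ yields
\[
v''+\frac{N-1}{r}v'+\frac{N(N-2)}{4}A(r)^{2}v+\frac{\gamma^{p-1}}{A(r)^{q}}|v|^{p-1}v=0,\qquad v(0)=1,\ v'(0)=0,
\]
and the first positive zero of $v(\cdot,\gamma)$ is still $R(\gamma)$. Formally setting $\gamma=0$ gives the linear problem
\[
v_{0}''+\frac{N-1}{r}v_{0}'+\frac{N(N-2)}{4}A(r)^{2}v_{0}=0,\qquad v_{0}(0)=1,\ v_{0}'(0)=0.
\]
Under the conformal substitution $u=A^{(N-2)/2}V$ used earlier in the paper, this linear equation corresponds to $\Delta_{\bbS^{N}}V=0$ for a radial $V$ regular at the pole, which forces $V$ to be constant. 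By direct verification one then identifies
\[
v_{0}(r)=(1+r^{2})^{-(N-2)/2},
\]
which is strictly positive on $[0,\infty)$, with $\min_{[0,R_{1}]}v_{0}=(1+R_{1}^{2})^{-(N-2)/2}>0$ for every fixed $R_{1}>0$.

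Next, for a fixed $R_{1}>0$, I would prove the uniform-on-$[0,R_{1}]$ convergence $v(\cdot,\gamma)\to v_{0}$ as $\gamma\downarrow 0$. On $[0,R_{1}]$ the coefficients $A(r)^{2}$ and $A(r)^{-q}$ are bounded, and the nonlinear perturbation carries the small factor $\gamma^{p-1}$ (here $p>1$ is essential). Setting $w:=v-v_{0}$, rewriting its equation in radial divergence form $(r^{N-1}w')'+r^{N-1}(\cdots)=0$, integrating twice from the origin, and running a standard Gronwall argument together with a bootstrap that keeps $|v|$ bounded, one obtains $\|v(\cdot,\gamma)-v_{0}\|_{C^{0}[0,R_{1}]}\to 0$. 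Since $v_{0}$ has a strictly positive minimum on $[0,R_{1}]$, this forces $v(r,\gamma)>0$ throughout $[0,R_{1}]$ for all sufficiently small $\gamma$, whence $R(\gamma)>R_{1}$. As $R_{1}$ was arbitrary, $R(\gamma)\to\infty$.

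The main, though fairly standard, technicality will be executing this continuous-dependence step in the presence of the singular coefficient $\frac{N-1}{r}$ at the origin; the radial weight $r^{N-1}$ and the Volterra structure of the resulting integral equation handle this routinely.
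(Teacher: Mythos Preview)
Your proposal is correct and the continuous-dependence step can indeed be carried out as you indicate: after passing to the Volterra form
\[
w(r)=-\int_{0}^{r}t^{1-N}\!\int_{0}^{t}s^{N-1}\Bigl(\tfrac{N(N-2)}{4}A(s)^{2}w(s)+\gamma^{p-1}A(s)^{-q}|v|^{p-1}v\Bigr)ds\,dt,
\]
exchanging the order of integration gives a kernel bounded by $s/(N-2)$, so Gronwall together with the bootstrap $|v|\le 2$ yields $\|w\|_{C^{0}[0,R_{1}]}\le C(R_{1})\gamma^{p-1}\to 0$. Your identification $v_{0}(r)=(1+r^{2})^{-(N-2)/2}$ is exactly $A(r)^{(N-2)/2}/2^{(N-2)/2}$, i.e.\ the conformal factor applied to the constant harmonic function on $\bbS^{N}$, so the limit is correct.

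The paper takes a different, more direct route. Instead of rescaling by $\gamma$, it passes to the geodesic-polar unknown $v=A^{-(N-2)/2}u$, uses the divergence form $(r^{N-1}A^{N-2}v')'+r^{N-1}A^{N}v^{p}=0$ to get $0\le v\le\delta:=2^{-(N-2)/2}\gamma$, and then integrates twice to obtain the quantitative inequality
\[
\frac{1}{2\delta^{p-1}}\le C_{1}+\int_{1}^{R(\gamma)}\frac{C_{0}+\log(1+r^{2})}{r^{N-1}A(r)^{N-2}}\,dr,
\]
from which $R(\gamma)\to\infty$ is immediate as $\delta\downarrow 0$. The paper's argument is purely by elementary integration and yields an explicit lower bound on $R(\gamma)$ in terms of $\gamma$, but says nothing about the shape of $u$. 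Your argument is more conceptual, requires a (routine) ODE-perturbation lemma, and has the bonus of showing $u(\cdot,\gamma)/\gamma\to(1+r^{2})^{-(N-2)/2}$ locally uniformly, i.e.\ it identifies the asymptotic profile of small solutions.
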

\begin{proof}
Let $v(r):=A(r)^{-\frac{N-2}{2}}u(r)$.
%Since $A(r)>0$, the first positive zero of $u(\,\cdot\,)$, which is $R(\gamma)$, is equal to that of $v(\,\cdot\,)$.
Then, $v$ satisfies
\[
(r^{N-1}A(r)^{N-2}v')'+r^{N-1}A(r)^Nv^p=0.
\]
Integrating this equation over $[0,r]$, we have
\[
v'(r)=-\frac{1}{r^{N-1}A(r)^{N-2}}\int_0^rs^{N-1}A(s)^Nv^pds\le 0.
\]
where we use $v'(0)=0$.
Let $\delta:=2^{-\frac{N-2}{2}}\gamma$.
Since $0\le v\le\delta$,
\[
-v'(r)\le\frac{1}{r^{N-1}A(r)^{N-2}}\int_0^rs^{N-1}A(s)^N\delta^pds,
\]
and hence,
\begin{equation}\label{S6L1E1}
-\frac{v'(r)}{2\delta^p}\le\frac{1}{r^{N-1}A(r)^{N-2}}\int_0^r(sA(s))^{N-1}ds.
\end{equation}
We have
\[
\int_0^r(sA(s))^{N-1}ds\le
\begin{cases}
\frac{2^{N-1}}{N}+\int_1^r(sA(s))ds=C_0+\log (1+r^2), & 1\le r,\\
\int_0^r(2s)^{N-1}ds=\frac{2^{N-1}r^N}{N}, & 0\le r\le 1.
\end{cases}
\]
Integrating (\ref{S6L1E1}) over $[0,R(\gamma)]$, we have
\begin{align*}
-\int_0^{R(\gamma)}\frac{v'(r)}{2\delta^p}&\le\int_0^{R(\gamma)}\frac{1}{r^{N-1}A(r)^{N-2}}\int_0^r(sA(s))^{N-1}dsdr\\
&\le\int_0^1\frac{2^{N-1}r}{NA(r)^{N-2}}dr+\int_1^{R(\gamma)}\frac{C_0+\log(r^2+1)}{r^{N-1}A(r)^{N-2}}dr\ \ \textrm{for}\ \ R>1.
\end{align*}
The first positive zero of $v(\,\cdot\,)$ is equal to that of $u(\,\cdot\,)$, i.e., $R(\gamma)$.
Therefore, $v(R(\gamma))=0$.
Since $v(0)=\delta$ and $C_1:=\int_0^1\frac{2^{N-1}r}{NA(r)^{N-2}}dr<\infty$,
\begin{equation}\label{S6L1E3}
\frac{1}{2\delta^{p-1}}\le C_1+\int_1^{R(\gamma)}\frac{C_0+\log(1+r^2)}{r^{N-1}A(r)^{N-2}}dr\ \ \textrm{for}\ \ R(\gamma)>1.
\end{equation}
Taking the limit $\delta\downarrow 0$, we see that the right-hand side of (\ref{S6L1E3}) diverges.
Hence, $R(\gamma)\rightarrow\infty$ as $\delta\downarrow 0$.
\end{proof}

\begin{lemma}\label{S6L2}
Suppose that $p>1$.
There is a $\gamma_0>0$ such that $R'(\gamma)<0$ for $\gamma\in(0,\gamma_0)$.
In particular, if $\gamma\in(0,\gamma_0)$, then $u(r,\gamma)$ is nondegenerate in the space of radial functions.
\end{lemma}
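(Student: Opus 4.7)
My approach combines implicit differentiation of the vanishing condition with a first-order perturbation analysis of a rescaled family, arranged so that the delicate $\gamma\to 0$ limit reduces to an explicit sign calculation for the linearised problem.

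\textbf{Reduction.} Since $u(\cdot,\gamma)>0$ on $[0,R(\gamma))$ and $u(R(\gamma),\gamma)=0$, one has $u_r(R(\gamma),\gamma)<0$; implicit differentiation of the zero condition in $\gamma$ yields
\[
R'(\gamma)=-\frac{\phi(R(\gamma),\gamma)}{u_r(R(\gamma),\gamma)},\qquad \phi:=u_\gamma,
\]
so it suffices to prove $\phi(R(\gamma),\gamma)<0$ for small $\gamma$. To isolate the small parameter I set $\tilde u(r,\e):=u(r,\gamma)/\gamma$ and $\e:=\gamma^{p-1}$; then $\tilde u$ satisfies
\[
\tilde u''+\frac{N-1}{r}\tilde u'+\frac{N(N-2)}{4}A(r)^2\tilde u+\frac{\e}{A(r)^q}|\tilde u|^{p-1}\tilde u=0,\qquad \tilde u(0,\e)=1,\ \tilde u'(0,\e)=0,
\]
and the relations $u=\gamma\tilde u(\cdot,\gamma^{p-1})$ together with $\tilde u(\tilde R(\e),\e)=0$ give the clean identity $\phi(R(\gamma),\gamma)=(p-1)\e\,\tilde u_\e(\tilde R(\e),\e)$. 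Hence the sign of $R'(\gamma)$ equals the sign of $\tilde u_\e(\tilde R(\e),\e)$.

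\textbf{The first variation.} At $\e=0$ the equation is linear; the unique solution regular at the origin with $\tilde u_0(0)=1$ is the explicit function $\tilde u_0(r)=(1+r^2)^{-(N-2)/2}$ (arising from the constant radial harmonic on $\bbS^N$ via stereographic projection). Since $\tilde u_0>0$ on $[0,\infty)$ and tends to $0$, the first positive zero $\tilde R(\e)$ satisfies $\tilde R(\e)\to\infty$ as $\e\downarrow 0$ (recovering Lemma~\ref{S6L1}). Differentiating in $\e$ at $\e=0$, the first variation $\psi(r):=\partial_\e\tilde u(r,0)$ solves the inhomogeneous linear ODE
\[
L[\psi]:=\psi''+\frac{N-1}{r}\psi'+\frac{N(N-2)}{4}A^2\psi=-\frac{\tilde u_0^p}{A^q},\qquad \psi(0)=\psi'(0)=0,
\]
whose right-hand side is strictly negative. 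The operator $L$ has two explicit linearly independent homogeneous solutions: $\tilde u_0$ (regular at $0$, $\sim r^{-(N-2)}$ at infinity) and a second solution $\tilde u_0^*$ corresponding to the non-constant radial harmonic $U_2(\theta)=\int_{\pi/2}^\theta\sin^{-(N-1)}\varphi\,d\varphi$, which is $\sim -r^{-(N-2)}$ near $r=0$ and tends to a strictly positive constant as $r\to\infty$. Feeding these into the variation-of-parameters formula and using the decay of $\tilde u_0$ and the source to make all the relevant integrals convergent, one obtains $\psi(r)\to-\kappa$ as $r\to\infty$, where $\kappa>0$ is the product of $\lim_{r\to\infty}\tilde u_0^*(r)$ with the strictly positive integral $\int_0^\infty s^{N-1}\tilde u_0(s)^{p+1}A(s)^{-q}\,ds$ (divided by the Wronskian constant).

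\textbf{Upgrading and nondegeneracy.} A weighted remainder estimate on $e_\e:=\tilde u-\tilde u_0-\e\psi$, obtained by applying the same variation-of-parameters identity to $L[e_\e]=-\e A^{-q}(\tilde u^p-\tilde u_0^p)$ and bootstrapping with the known decay rates of $\tilde u_0$ and $\tilde u_0^*$, gives $\tilde u_\e(r,\e)\to\psi(r)$ uniformly on $\{r\ge r_0\}$ for any fixed $r_0$; evaluating along $r=\tilde R(\e)\to\infty$ then yields $\tilde u_\e(\tilde R(\e),\e)\to-\kappa<0$, and therefore $R'(\gamma)<0$ for all sufficiently small $\gamma$. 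Nondegeneracy in the radial class is automatic from the same identity: $\phi=u_\gamma$ is, up to scalar, the unique solution of the linearised equation regular at $0$, so radial degeneracy of $u(\cdot,\gamma)$ is equivalent to $\phi(R(\gamma),\gamma)=0$, i.e.\ to $R'(\gamma)=0$, which we have just ruled out. The main technical obstacle lies in this last upgrade: the formal expansion $\tilde u=\tilde u_0+\e\psi+O(\e^2)$ is regular in $\e$, but the interval is unbounded and the zero $\tilde R(\e)$ escapes to infinity, so a naive Gronwall bound does not suffice; one must exploit the explicit $r^{-(N-2)}$ decay of $\tilde u_0$ together with the $O(1)$ boundedness of $\tilde u_0^*$ to close the estimate uniformly on the moving target $r=\tilde R(\e)$.
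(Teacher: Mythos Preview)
Your approach is \emph{genuinely different} from the paper's, and the formal computation is correct: the rescaling $\tilde u=u/\gamma$, the identification $\tilde u_0=(1+r^2)^{-(N-2)/2}$, the second solution $\tilde u_0^*$ with the stated asymptotics, and the variation-of-parameters formula all check out, and they do give $\psi(r)\to-\kappa<0$ with $\kappa=\frac{c_*}{W_0}\int_0^\infty s^{N-1}\tilde u_0^{p+1}A^{-q}\,ds$. The reduction $\phi(R(\gamma),\gamma)=(p-1)\e\,\tilde u_\e(\tilde R(\e),\e)$ is also correct.

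However, the step you yourself flag as ``the main technical obstacle'' is not actually carried out. You assert that a weighted variation-of-parameters bound on $e_\e=\tilde u-\tilde u_0-\e\psi$ yields $\tilde u_\e(r,\e)\to\psi(r)$ uniformly on $\{r\ge r_0\}$, but you do not prove it. This is not a routine estimate: the nonlinearity enters as $\e A^{-q}(\tilde u^p-\tilde u_0^p)$, and to close a bootstrap you need an \emph{a priori} weighted bound on $\tilde u-\tilde u_0$ that is uniform on $[0,\tilde R(\e)]$ with $\tilde R(\e)\to\infty$. Since $\tilde u_0^*$ is bounded and $\tilde u_0\sim r^{-(N-2)}$, the Green kernel for $L$ does not gain decay, so the contraction must come entirely from the decay of the source; this can be done, but it requires a careful choice of weight and a genuine argument that you have not supplied. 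As written, the proof has a gap precisely at the point where the conclusion is extracted.

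By contrast, the paper avoids the escaping-interval issue entirely via Sturm comparison. It observes that $|u(r,\gamma)|\le 2^{-(N-2)/2}\gamma A(r)^{(N-2)/2}$, hence $pA^{-q}u^{p-1}\le NA^2$ for small $\gamma$; comparing $w=u_\gamma$ with the explicit function $\psi_0(r)=A^{(N-2)/2}(A-1)$ (which solves $(\calL+NA^2)\psi_0=0$ and has a single zero at $r=1$) shows $w$ has \emph{at most} one zero on $[0,R(\gamma)]$. A separate variational argument gives $\lambda_1<0$ for the linearisation, so comparison with the first eigenfunction forces $w$ to have \emph{at least} one zero. Hence $w$ has exactly one interior zero and $w(R(\gamma))<0$. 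This is shorter and fully elementary; your approach, if the uniform estimate is completed, yields the additional quantitative information $\phi(R(\gamma),\gamma)\sim -(p-1)\kappa\,\gamma^{p-1}$, but at the cost of the analysis you left undone.
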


\begin{proof}
By $\calL$ we denote
\[
\calL:=\frac{d^2}{dr^2}+\frac{N-1}{r}\frac{d}{dr}+\frac{N(N-2)}{4}A(r)^2.
\]
We define $w(r):=u_{\gamma}(r,\gamma)$.
Then $w(r)$ satisfies
\begin{equation}\label{S6L2E1}
\begin{cases}
\left(\calL+{pA(r)^{-q}u(r,\gamma)^{p-1}}\right)w=0, & 0<r<R(\gamma),\\
w(0)=1,\ w'(0)=0.\\
\end{cases}
\end{equation}
We show that
\begin{equation}\label{S6L2E2}
w(R(\gamma))<0.
\end{equation}
Let $\psi_0(r):=A(r)^{\frac{N-2}{2}}(A(r)-1)$.
Then, by direct calculation we see that $\psi_0(r)$ satisfies
\[
\begin{cases}
\left(\calL+NA(r)^2\right)\psi_0=0, & 0<r<\infty,\\
\psi_0(0)=2^{\frac{N-2}{2}},\ \psi_0'(0)=0.\\
\end{cases}
\]
Note that $\psi_0$ has a unique zero at $r=1$ on $[0,\infty)$ and that $\psi_0$ corresponds to the second eigenfunction of $\Delta_{\bbS^N}$ on the whole sphere.
Since $U(\theta)(=A(r)^{-\frac{N-2}{2}}u(r))$ satisfies (\ref{EFODE}), $U(\theta)$ is decreasing and $|U(\theta)|\le\Gamma$ $(0\le\theta\le\Theta)$, where $\Gamma=2^{-\frac{N-2}{2}}\gamma$.
Therefore, $|u(r)|\le 2^{-\frac{N-2}{2}}\gamma A(r)^{\frac{N-2}{2}}$ for $r\in[0,R(\gamma)]$.
We have
\[
{p{A(r)^{-q}}u(r,\gamma)^{p-1}}\le 2^{-\frac{(N-2)(p-1)}{2}}p\gamma^{p-1}A(r)^2\ \ \textrm{for}\ \ r\in[0,R(\gamma)].
\]
Thus, if $\gamma>0$ is small, then ${p{A(r)^{-q}}u(r,\gamma)^{p-1}}\le NA(r)^2$ for $r\in[0,R(\gamma)]$.
Hence, by the oscillation theorem for Sturm-Liouville equations (e.g., see Ince~\cite[pp.224--225]{I44}) we see that $w(r)$ oscillates more slowly than $\psi_0(r)$.
Since $R(\gamma)$ is large, $\psi_0(r)$ has exactly one zero on $[0,R(\gamma)]$, and hence $w(r)$ has at most one zero on $[0,R(\gamma)]$.
Let $\lambda_1$ be the first eigenvalue of the eigenvalue problem
\begin{equation}\label{S6L2E3}
\begin{cases}
\left(\calL+{p{A(r)^{-q}}u(r,\gamma)^{p-1}}\right)\phi+\lambda\phi=0, & 0<r<R(\gamma),\\
\phi(R(\gamma))=0,\\
\phi(r)>0, & 0\le r<R(\gamma),\\
\phi'(0)=0.
\end{cases}
\end{equation}
We define
\[
\calH[\psi]:=\int_0^{R(\gamma)}\left((\psi')^2-\frac{N(N-2)}{4}A(r)\psi^2-\frac{pu(r,\gamma)^{p-1}}{A(r)^q}\psi^2\right)r^{N-1}dr.
\]
Multiplying $\left(\calL+{p{A(r)^{-q}}u(r,\gamma)^{p-1}}\right)u=(p-1)u^p$ by $ur^{N-1}$ and integrating it, we have
\begin{equation}\label{S6L2E3+}
\calH[u]=-(p-1)\int_0^{R(\gamma)}u(r,\gamma)^{p+1}r^{N-1}dr<0.
\end{equation}
Using a variational characterization of $\lambda_1$ and (\ref{S6L2E3+}), we have
\begin{equation}\label{S6L2E4}
\lambda_1=\inf_{\psi\in X}\frac{\calH[\psi]}{\left\|\psi\right\|^2_{L^2}}\le\frac{\calH[u]}{\left\|u\right\|^2_{L^2}}<0,
\end{equation}
where $\left\|\psi\right\|_{L^2}:=\left(\int_0^{R(\gamma)}\psi^2r^{N-1}dr\right)^{1/2}$ and
\[
X:=\left\{\psi(r);\ \int_0^{R(\gamma)}\left((\psi')^2+\psi^2\right)r^{N-1}dr<\infty\ \ \textrm{and}\ \ \psi(R(\gamma))=0\right\}.
\]
The first eigenfunction $\phi_1(r)$ satisfies
\[
\begin{cases}
\left(\calL+{p{A(r)^{-q}}u(r,\gamma)^{p-1}}+\lambda_1\right)\phi_1=0, & 0<r<R(\gamma),\\
\phi_1(0)=1,\ \phi_1'(0)=0.\\
\phi_1>0, & 0\le r<R(\gamma).\\
\end{cases}
\]
Since ${p{A(r)^{-q}}u(r,\gamma)^{p-1}}+\lambda_1<{p{A(r)^{-q}}u(r,\gamma)^{p-1}}$, by the oscillation theorem we see that $w(r)$ oscillates more rapidly than $\phi_1(r)$, and hence $w(r)$ has at least one zero on $[0,R(\gamma)]$.
Thus $w(r)$ has exactly one zero on $[0,R(\gamma)]$.
If $w(R(\gamma))=0$, then $w(r)>0$ on $[0,R(\gamma))$.
Therefore, $0$ is the first eigenvalue, which contradicts (\ref{S6L2E4}).
Thus, $w(R(\gamma))\neq 0$.
Since $w(0)>0$, $w(r)$ has exactly one zero on $(0,R(\gamma))$, which indicates that $w(R(\gamma))<0$.
We obtain (\ref{S6L2E2}).

Next, we prove the statements of the lemma, using (\ref{S6L2E2}).
Differentiating $u(R(\gamma),\gamma)=0$ in $\gamma$, we have $u_r(R(\gamma),\gamma)R'(\gamma)+u_{\gamma}(R(\gamma),\gamma)=0$.
It follows from Hopf's boundary point lemma that $u_r(R(\gamma),\gamma)<0$.
Hence,
\[
R'(\gamma)=-\frac{u_{\gamma}(R(\gamma),\gamma)}{u_r(R(\gamma),\gamma)}<0.
\]
Because of (\ref{S6L2E1}), $0$ is an eigenvalue of (\ref{S6L2E3}) if and only if $w(R(\gamma))=0$.
By (\ref{S6L2E2}) we see that $0$ is not an eigenvalue which means that $u(r,\gamma)$ is nondegenerate.
\end{proof}

\begin{remark}
In the above proof we show that $w(r)$ has one zero in $(0,R(\gamma))$ and $w(R(\gamma))<0$.
This indicates that the Morse index of $u$ in the space of radial functions is one.
We do not use this fact in this paper.
\end{remark}

% **********************************************************
% **********************************************************
% **********************************************************
% Section 7
% **********************************************************
% **********************************************************
% **********************************************************
\section{Infinitely many turning points}
First, we show that $R(\gamma)$ oscillates around $R^*$ as $\gamma\to\infty$.
\begin{lemma}\label{S7L1}
Suppose that $p_{\rm S}<p<p_{\rm JL}$.
Let $u(r,\gamma)$ be the solution of (\ref{DD}), and let $R(\gamma)$ be the first positive zero of $u(\,\cdot\,,\gamma)$.
Let $(R^*,u^*(r))$ be the singular solution of (\ref{D}) given in Remark~\ref{rem1}.
Then the following hold:\\
(i) $\calZ_{(0,\min\{R(\gamma),R^*\})}[u(\,\cdot\,,\gamma)-u^*(\,\cdot\,)]\rightarrow\infty$ as $\gamma\rightarrow\infty$.\\
(ii) $R(\gamma)$ oscillates around $R^*$ infinitely many times as $\gamma\to\infty$.
\end{lemma}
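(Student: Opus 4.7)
The overall strategy rests on the rescaled phase-plane analysis of Section~5 combined with the oscillation of $\bar y(s)$ around the equilibrium $(1,0)$ from Proposition~\ref{S2P1}. My plan is to work in the shifted variable $s = t + \frac{\log\gamma}{m\mu}$, in which the regular solution in Emden form is $\hat y(s,\gamma)$ (as in Section~5) and the singular solution is $y^*(t)$ (as in Section~4). Since smooth changes of variable preserve zero counts, the claim reduces to counting zeros of
\[
h(s,\gamma) := \hat y(s,\gamma) - y^*\!\left(s - \tfrac{\log\gamma}{m\mu}\right)
\]
on the $s$-interval corresponding to $(0,\min\{R(\gamma),R^*\})$.

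For (i), I fix an integer $K>0$. Since $p_{\rm S}<p<p_{\rm JL}$, the equilibrium $(1,0)$ of (\ref{S2E3}) is a stable spiral, and Proposition~\ref{S2P1}(ii) says $\bar y(s)$ oscillates about $1$ infinitely often as $s\to\infty$. At each crossing $\bar y(s_*)=1$ one has $\bar y'(s_*)=\bar z(s_*)\neq 0$, else the orbit would reach $(1,0)$ at finite time; hence $\bar y-1$ has only transversal zeros, and I choose $s_0$ so that at least $K$ of them lie in $(-\infty,s_0]$. By Lemma~\ref{S5L3}, $\hat y(\cdot,\gamma)\to\bar y(\cdot)$ uniformly on $(-\infty,s_0]$ as $\gamma\to\infty$. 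On the same set, $s-\frac{\log\gamma}{m\mu}\to-\infty$ uniformly, so Lemma~\ref{S4L1} yields $y^*\!\left(s-\tfrac{\log\gamma}{m\mu}\right)\to 1$ uniformly. Hence $h(\cdot,\gamma)\to\bar y(\cdot)-1$ uniformly on $(-\infty,s_0]$, and a standard perturbation argument at transversal zeros produces at least $K$ zeros of $h(\cdot,\gamma)$ there when $\gamma$ is large. Under $r=e^{ms}/\gamma^{1/\mu}$ these zeros lie close to $r=0$ and thus inside $I(\gamma)$ for $\gamma$ large; since $K$ was arbitrary, (i) follows.

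For (ii), I first show that every intersection of $u(\cdot,\gamma)$ and $u^*$ in the open interval $(0,R^*)$ is transversal: if $u$ and $u^*$ shared both value and derivative at some $r_0\in(0,R^*)$, Cauchy uniqueness applied at the regular interior point $r_0$ would force $u\equiv u^*$ on a neighborhood, contradicting the distinct behaviors at $r\downarrow 0$ (regular value $\gamma$ versus the singular asymptotics (\ref{S4C2E1})). A direct computation of endpoint signs then determines the parity of $\calZ_{I(\gamma)}[u(\cdot,\gamma)-u^*]$: near $r=0$, $u-u^*<0$ because $u\to\gamma$ is bounded while $u^*\to+\infty$; at $r=\min\{R(\gamma),R^*\}$, the difference equals $-u^*(R(\gamma))<0$ when $R(\gamma)<R^*$ and equals $u(R^*,\gamma)>0$ when $R(\gamma)>R^*$. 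With all interior zeros simple, the parity of $\calZ(\gamma)$ is therefore even when $R(\gamma)<R^*$ and odd when $R(\gamma)>R^*$.

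The closing step, and the main obstacle, is to deduce from $\calZ(\gamma)\to\infty$ that the parity does not stabilize, equivalently, that $R(\gamma)=R^*$ for arbitrarily large $\gamma$. The difficulty is that $\calZ(\gamma)$ could a priori grow purely through pair-births of zeros near $r=0$, since each extra loop of the spiral of $\bar y$ around $(1,0)$ contributes two transversal crossings of $y=1$, which preserves parity. To exclude this scenario I plan to combine the $C^1$-dependence of the simple interior zeros on $\gamma$, the convergence $R(\gamma)\to R^*$ from Lemma~\ref{S5L2}, and the precise position of the zeros of $h(\cdot,\gamma)$ relative to $s_\Theta(\gamma):=t_\Theta+\frac{\log\gamma}{m\mu}$: as $\gamma$ grows, the rightmost zero of $h(\cdot,\gamma)$ must cross through $s_\Theta(\gamma)$ infinitely often, and each such crossing corresponds to a value of $\gamma$ with $R(\gamma)=R^*$ and an odd change in parity. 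Making this outermost-zero migration rigorous is the technical heart of the argument; once it is established, combining infinitely many $R=R^*$ events with $R(\gamma)\to R^*$ from Lemma~\ref{S5L2} yields the claimed oscillation of $R(\gamma)$ around $R^*$.
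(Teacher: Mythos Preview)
Your argument for (i) is correct and is the same blow-up as the paper's, written in the Emden variable $y$ instead of the rescaled $\tilde u$: the limit $\bar y(s)-1$ is exactly $\bar u(\rho)-\bar u^*(\rho)$ divided by the positive factor $\bar u^*(\rho)$, so the zero counts agree.

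For (ii) you have all the ingredients but you have manufactured a phantom obstacle. You already proved that every zero of $u(\cdot,\gamma)-u^*$ in $(0,R^*)$ is simple (ODE uniqueness). This rules out interior pair-births outright: for a family of $C^1$ functions with only simple zeros, the zero count on an interval is locally constant in $\gamma$ unless a zero touches the boundary. At the left endpoint $r=0$ the difference tends to $-\infty$ uniformly for $\gamma$ in compact sets, so no zero can enter or leave there. At the right endpoint $\min\{R(\gamma),R^*\}$ the difference equals either $u(R^*,\gamma)>0$ (when $R(\gamma)>R^*$) or $-u^*(R(\gamma))<0$ (when $R(\gamma)<R^*$), so a zero can reach the right endpoint only when $R(\gamma)=R^*$. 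Consequently, if $R(\gamma)$ stayed on one side of $R^*$ for all large $\gamma$, the count $\calZ_{I(\gamma)}[u(\cdot,\gamma)-u^*]$ would be constant for those $\gamma$, contradicting (i). This is the paper's entire argument for (ii): it yields a sequence $\gamma_n\to\infty$ with $R(\gamma_n)=R^*$ and $\calZ=n$, and then your parity observation finishes the oscillation claim.

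The ``outermost-zero migration'' program you sketch is unnecessary; the point you are missing is that the simplicity you already established forbids zeros from appearing anywhere except through the moving right boundary. In particular, the zeros that you locate ``close to $r=0$'' in part (i) did not spawn near $0$; each of them entered $I(\gamma)$ through the right endpoint at some earlier value of $\gamma$ and then drifted leftward.
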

\begin{proof}
We prove (i), using a blow-up argument.
We change variables
\[
\tilde{u}(\rho,\gamma):=2^{-\frac{q}{p-1}}\gamma^{-1}u(r,\gamma)\ \ \textrm{and}\ \ \rho:=\gamma^{\frac{p-1}{2}}r.
\]
Then $\tilde{u}(\rho)$ satisfies
\begin{equation}\label{S7L1E1}
\begin{cases}
\tu''+\frac{N-1}{\rho}\tu'+\tu^p+\wtB_0(\rho,\gamma)\tu^p+\wtB_1(\rho,\gamma)\tu=0, & 0<\rho<\wtR(\gamma),\\
\tu(0)=1,\ \tu'(0)=0,\\
\end{cases}
\end{equation}
where $\wtR(\gamma):=\gamma^{\frac{p-1}{2}}R(\gamma)$ which is the first positive zero of $\tu(\,\cdot\,,\gamma)$,
\[
\wtB_0(\rho,\gamma):=\left(1+\frac{\rho^2}{\gamma^{p-1}}\right)^q-1,\ \ \textrm{and}\ \ \wtB_1(\rho,\gamma):=\frac{N(N-2)\gamma^{p-1}}{(\gamma^{p-1}+\rho^2)^2}.
\]
From Lemma~\ref{S5L2} it holds $\wtR(\gamma)\to\infty$ $(\gamma\to\infty)$.
Let $\rho_0>0$ be large.
If $\gamma$ is large, then the interval $[0,\rho_0]$ is included in $[0,\wtR(\gamma)]$.
Since $\wtB_0>0$ and $\wtB_1>0$, it is clear from the equation in (\ref{S7L1E1}) that $\tu(\rho)$ is decreasing on $[0,\wtR(\gamma)]$.
Therefore, $0\le\tu(\rho)\le 1$ on $[0,\rho_0]$ provided that $\gamma$ is large.
Since $|\wtB_0(\rho,\gamma)|$ and $|\wtB_1(\rho,\gamma)|$ uniformly converge to $0$ on $[0,\rho_0]$, $|\wtB_0(\rho,\gamma)\tu(\rho)^p|+|\wtB_1(\rho,\gamma)\tu(\rho)|\to 0$ in $C^0[0,\rho_0]$.
It follows from the equation in (\ref{S7L1E1}) that as $\gamma\to\infty$,
\begin{equation}\label{S7L1E2}
\tu(\rho)\to\bu(\rho)\ \ \textrm{in}\ \ C^1[0,\rho_0],
\end{equation}
where $\bu(\rho)$ is the solution of (\ref{S2E1}) with $\bar{\gamma}=1$.
Next, we apply the same change of variables to the singular solution $u^*(r)$. We define $\tu^*(\rho)$ by
\[
\tu^*(\rho):=2^{-\frac{q}{p-1}}\gamma^{-1}u^*(r)\ \ \textrm{and}\ \ \rho=\gamma^{\frac{p-1}{2}}r.
\]
By (\ref{S4C2E1}) we have
\begin{equation}\label{S7L1E3}
\tu^*(\rho)=a\rho^{-\frac{2}{p-1}}(1+o(1))\ \textrm{as}\ \frac{\rho}{\gamma^{\frac{p-1}{2}}}\to 0.
\end{equation}
When $\rho\in[0,\rho_0]$, $\frac{\rho}{\gamma^{\frac{p-1}{2}}}$ uniformly converges to $0$, and hence $o(1)$ in (\ref{S7L1E3}) uniformly converges to $0$.
Since $\tu^*(\rho)$ is unbounded near $\rho=0$,
\[
\tu^*(\rho)\to\bu^*(\rho)\ \textrm{in}\ C^0_{loc}(0,\rho_0],
\]
where $\bu^*(\rho)$ is defined by (\ref{S2E0}).
Since $\tu^*(\rho)$ satisfies the ODE in (\ref{S7L1E1}), this convergence holds in $C^2_{loc}(0,\rho_0]$, i.e.,
\begin{equation}\label{S7L1E4}
\tu^*(\rho)\to\bu^*(\rho)\ \textrm{in}\ C^2_{loc}(0,\rho_0].
\end{equation}
On the other hand, if $\gamma>0$ is large, then
\begin{align}
\calZ_{(0,\min\{R(\gamma),R^*\})}[u(\,\cdot\,,\gamma)-u^*(\,\cdot\,)]&=\calZ_{(0,\min\{\wtR(\gamma),\gamma^{\frac{p-1}{2}}R^*\})}[\tu(\,\cdot\,,\gamma)-\tu^*(\,\cdot\,)]\nonumber\\
&\ge\calZ_{(0,\rho_0)}[\tu(\,\cdot\,,\gamma)-\tu^*(\,\cdot\,)].\label{S7L1E5}
\end{align}
We see by (\ref{S7L1E2}) and (\ref{S7L1E4}) that if $\gamma>0$ is large, then
\[
\calZ_{(0,\rho_0)}[\tu(\,\cdot\,,\gamma)-\tu^*(\,\cdot\,)]\ge\calZ_{(0,\rho_0)}[\bu(\,\cdot\,)-\bu^*(\,\cdot\,)].
\]
Proposition~\ref{S2P1} (ii) says that $\calZ_{(0,\rho_0)}[\bu(\,\cdot\,)-\bu^*(\,\cdot\,)]\to\infty$ as $\rho_0\to\infty$.
Therefore, if $\rho_0$ and $\gamma$ are large and $\rho_0\le\wtR(\gamma)$, then $\calZ_{(0,\rho_0)}[\tu(\,\cdot\,,\gamma)-\tu^*(\,\cdot\,)]$ can be arbitrary large.
By (\ref{S7L1E5}) we see that (i) holds.

We prove (ii).
Since $u(r,\gamma)$ and $u^*(r)$ satisfy the same equation, every zero of $u(\,\cdot\,,\gamma)-u^*(\,\cdot\,)$ is simple.
Each zero continuously depends on $\gamma$.
The zero number of $u(\,\cdot\,,\gamma)-u^*(\,\cdot\,)$ on a bounded interval is finite, since the zero set of $u(\,\cdot\,,\gamma)-u^*(\,\cdot\,)$ does not have an accumulation point.
Let $I(\gamma):=(0,\min\{R(\gamma),R^*\})$.
The intersection number $\calZ_{I(\gamma)}[u(\,\cdot\,,\gamma)-u^*(\,\cdot\,)]$ is preserved if another zero does not come from $\partial I(\gamma)$.
Since $u(0,\gamma)-u^*(0)=-\infty$, a zero cannot come from $0\in\partial I(\gamma)$.
%If $\calZ_{I(\gamma)}[u(\,\cdot\,,\gamma)-u^*(\,\cdot\,)]$ changes, then another zero comes from $\min\{R(\gamma),R^*\}$.
%This means that $R(\gamma)=R^*$, otherwise $(0=)u^*(R^*)<u(R^*)$ or $(0=)u(R(\gamma))<u^*(R(\gamma))$, and hence a zero of $u(\,\cdot\,,\gamma)-u^*(\,\cdot\,)$ cannot come from $\min\{R(\gamma),R^*\}$.
If $R(\gamma)>R^*$ for large $\gamma$, then there is $C>0$ such that $\calZ_{I(\gamma)}[u(\,\cdot\,,\gamma)-u^*(\,\cdot\,)]\le C$ for all $\gamma>0$, which contradicts (i).
If $R(\gamma)<R^*$ for large $\gamma$, then we similarly obtain a contradiction.
Therefore, there are a positive integer $m$ and a sequence $\{\gamma_n\}_{n=m}^{\infty}$ $(\gamma_m<\gamma_{m+1}<\cdots\to\infty)$ such that $\calZ_{I(\gamma_n)}[u(\,\cdot\,,\gamma)-u^*(\,\cdot\,)]=n$ and $u(\,\cdot\,,\gamma)-u^*(\,\cdot\,)$ has a zero at $\min\{R(\gamma),R^*\}$, i.e., $R(\gamma)=R^*$.
Since the zero set is discrete, there is a sequence $\{\hat{\gamma}_n\}_{n=m}^{\infty}$ such that $\gamma_m<\hat{\gamma}_m<\gamma_{m+1}<\hat{\gamma}_{m+1}<\cdots$ and $R(\hat{\gamma}_n)\neq R^*$.
We easily see the following:
If $\calZ_{I(\gamma)}[u(\,\cdot\,,\gamma)-u^*(\,\cdot\,)]$ is even (resp. odd), then $R(\hat{\gamma}_n)<R^*$ (resp. $R^*<R(\hat{\gamma}_n)$).
Thus, (ii) holds.
\end{proof}

\begin{proof}[Proof of Theorem~\ref{A}]
Let $\Theta(\Gamma):=2\arctan R(\gamma)$, $\Gamma:=2^{-\frac{N-2}{2}}\gamma$, and $\Theta^*:=2\arctan R^*$.
Note that the range of $\Theta$ is $(0,\pi)$.
Lemma~\ref{S3L3} says that $R(\gamma)$ is a $C^1$-function on $(0,\infty)$ and $0<R(\gamma)<\infty$ for $\gamma\in(0,\infty)$.
Hence, (i) holds.
It follows from Lemma~\ref{S6L1} that $\Theta(\Gamma)\to\pi$ as $\Gamma\downarrow 0$.
Since $\Theta'(\Gamma)=2^{\frac{N}{2}}R'(\gamma)/(1+R(\gamma)^2)$, we see by Lemma~\ref{S6L2} that $\Theta'(\Gamma)<0$ if $\Gamma>0$ is small.
Thus, (ii) holds.
By Lemma~\ref{S5L2} we see that $\Theta(\Gamma)\to\Theta^*$ as $\Gamma\to\infty$.
Thus, (iii) holds.
By Lemma~\ref{S7L1}~(ii) we see that (iv) holds.
The proof is complete.
\end{proof}

\begin{proof}[Proof of Corollary~\ref{B}]
Let $\underline{\Theta}:=\inf\{\Theta(\Gamma);\ \Gamma>0\}$.
Since $\Theta(\Gamma)\to\Theta^*$ $(\Gamma\to\infty)$, $\Theta(\Gamma)\to\pi$ $(\Gamma\to 0)$, and $\Theta(\Gamma)$ is continuous, we see that $\underline{\Theta}>0$.
Therefore, (i) holds.
If $p_{\rm S}<p<p_{\rm JL}$, then $\Theta(\Gamma)$ oscillates around $\Theta^*$.
Hence, $\underline{\Theta}<\Theta^*$ and $\{\Gamma>0;\ \Theta(\Gamma)\le\Theta^*-\e\}$ is bounded for small $\e>0$.
The infimum is attained, and (ii) holds.
(iii) follows from Theorem~\ref{A}~(iv).
If $\Gamma_0>0$ is small, then $\Theta'(\Gamma)<0$ for $\Gamma\in(0,\Gamma_0)$, because of Theorem~\ref{A}~(ii).
On the other hand, $\Theta_0:=\sup_{\Gamma\ge\Gamma_0}\Theta(\Gamma)<\pi$, because of Theorem~\ref{A}~(iii).
We see that if $\Theta_1\in(\frac{\Theta_0+\pi}{2},\pi)$, then there exists the unique $\Gamma>0$ such that $\Theta(\Gamma)=\Theta_1$ and $0<\Gamma<\Gamma_0$.
It is known that the solution $(\Theta(\Gamma),U(\theta))$ is nondegenerate if and only if $\Theta'(\Gamma)\neq 0$ which is equivalent to $U'(\Theta(\Gamma))\neq 0$.
The nondegeneracy holds, since $\Theta'(\Gamma)\neq 0$ for $\Gamma\in(0,\Gamma_0)$.
Thus, (iv) holds.
\end{proof}

% **********************************************************
% **********************************************************
% **********************************************************
% Section 8
% **********************************************************
% **********************************************************
% **********************************************************
\section{Asymptotic shapes of the branch as $p\to\infty$ and $p\downarrow 1$}
We briefly prove Proposition~\ref{CriSub} before proving Theorems~\ref{ThD} and \ref{ThE}.
\begin{proof}[Proof of Proposition~\ref{CriSub}]
Since Lemmas~\ref{S3L3} and \ref{S6L1} hold for $p=p_{\rm S}$, (i) and (ii) hold.
Shioji-Watanabe~\cite[Theorem 5]{SW13} showed that if $N\ge 3$ and $1<p\le p_{\rm S}$, then (\ref{EFODE}) has at most one solution.
Since $\Theta(\Gamma)$ is continuous and $\Theta(\Gamma)\to\pi$ $(\Gamma\downarrow 0)$, $\Theta(\Gamma)$ should be strictly decreasing, otherwise (\ref{EFODE}) has more than two solutions, which is a contradiction.
Thus, (iii) holds.
When $N\ge 4$ and $p=p_{\rm S}$, Bandle {\it et al.}~\cite[Section~7.4]{BBF98} showed that for each $\Theta\in(0,\pi)$, (\ref{EFODE}) has a regular solution.
This result indicates that $\Theta(\Gamma)\to 0$ $(\Gamma\to\infty)$, otherwise $\Theta(\Gamma)\to c>0$ and (\ref{EFODE}) has no solution for $\Theta\in (0,c)$, which is a contradiction.
Thus, (iv) holds.
When $N=3$ and $p=p_{\rm S}$,  Bandle-Peletier~\cite[Theorem 1]{BP99} showed that (\ref{EFODE}) has no regular solution for $\Theta\in(0,\frac{\pi}{2}]$ and that it has a regular solution for $\Theta\in(\frac{\pi}{2},\pi)$.
This indicates that $\Theta(\Gamma)\downarrow\frac{\pi}{2}$ as $\Gamma\to\infty$.
When $N=3$ and $1<p<p_{\rm S}$, it is easily shown that (\ref{EFS}) has a radial solution for each $\Theta\in(0,\pi)$.
This indicates that $\Theta(\Gamma)\to 0$ as $\Gamma\to\infty$.
Hence, (v) holds.
\end{proof}

\begin{proof}[Proof of Theorem~\ref{ThD}]
Let $U(\theta)$ be the solution of (\ref{EFODE}).
Then, $U(\Theta)=0$ and $U(\theta)$ is a solution of (\ref{IVP}) for some $\Gamma>0$.
We use the Poho\v{z}aev identity of the following type:
\begin{multline}\label{S8P1E-1}
H(\theta):=-U'(\theta)^2\sin^{2N-2}\theta\int_{\theta}^{\Theta}\frac{d\varphi}{\sin^{N-1}\varphi}-U(\theta)U'(\theta)\sin^{N-1}\theta\\
-\frac{2}{p+1}U(\theta)^{p+1}\sin^{2N-2}\theta\int_{\theta}^{\Theta}\frac{d\varphi}{\sin^{N-1}\varphi}.
\end{multline}
It is clear that
\begin{equation}\label{S8P1E0}
H(\Theta)=0.
\end{equation}
By L'Hopital's rule we have
%\begin{align}
%\lim_{\theta\downarrow 0}\sin^{N-2}\theta\int_{\theta}^{\Theta}\frac{d\varphi}{\sin^{N-1}\varphi}
\begin{equation}\label{S8P1E1}
\lim_{\theta\downarrow 0}\frac{\int_{\theta}^{\Theta}\frac{d\varphi}{\sin^{N-1}\varphi}}{\frac{1}{\sin^{N-2}\theta}}
=\lim_{\theta\downarrow 0}\frac{-\sin^{-N+1}\theta}{(-N+2)\sin^{-N+1}\theta}\nonumber\\
=\frac{1}{N-2}.
\end{equation}
%\end{align}
By (\ref{S8P1E1}) we have
\begin{align}
\lim_{\theta\downarrow 0}\sin^{2N-2}\theta\int_{\theta}^{\Theta}\frac{d\varphi}{\sin^{N-1}\varphi}&=\lim_{\theta\downarrow 0}\sin^N\theta\left(\sin^{N-2}\theta\int_{\theta}^{\Theta}\frac{d\varphi}{\sin^{N-1}\varphi}\right)\nonumber\\
&=0.\label{S8P1E2}
\end{align}
Using (\ref{S8P1E2}), we have
\begin{equation}\label{S8P1E3}
\lim_{\theta\downarrow 0}H(\theta)=0.
\end{equation}
Differentiating $H(\theta)$ in $\theta$, we have
\[
H'(\theta)=\frac{4N-4}{p+1}U(\theta)^{p+1}\sin^{N-1}\theta\left(\frac{p+3}{4N-4}-F(\theta)\right),
\]
where
%\[
$F(\theta):=\cos\theta\sin^{N-2}\theta\int_{\theta}^{\Theta}\frac{d\varphi}{\sin^{N-1}\varphi}$.
%\]
Hereafter, let $\Theta=\Theta_0\in(0,\pi)$ be fixed.
By (\ref{S8P1E1}) we have
\begin{equation}\label{S8P1E4}
\lim_{\theta\downarrow 0}F(\theta)=\frac{1}{N-2}.
\end{equation}
Because of (\ref{S8P1E4}) and the continuity of $F(\theta)$ on $(0,\Theta_0]$, we see that $\sup_{0<\theta\le\Theta_0}F(\theta)<\infty$.
Therefore there is a large $\bar{p}=\bar{p}(\Theta_0)>0$ such that if $p>\bar{p}$, then $H'(\theta)>0$ for $\theta\in(0,\Theta_0)$.
We obtain a contradiction, because of (\ref{S8P1E0}) and (\ref{S8P1E3}).
Thus, if $p>\bar{p}$, then (\ref{EFODE}) has no solution for $\Theta=\Theta_0$.
Since the solution set $\{(\Theta(\Gamma),\Gamma)\}$ is a continuous curve including a point near $(\pi,0)$, (\ref{EFODE}) has no solution for $\Theta\in(0,\Theta_0]$.
We prove the first statement of Theorem~\ref{D} by contradiction.
Suppose the contrary, i.e., there is $\e>0$ such that $\underline{\Theta}\in(0,\pi-\e)$ for large $p>1$, where $\underline{\Theta}$ is given in Corollary~\ref{B} (i).
Let $\Theta_1:=\pi-\frac{\e}{2}(>\underline{\Theta})$.
If $p>\bar{p}(\Theta_1)$, then (\ref{EFODE}) has no solution for $\Theta=\Theta_1$.
This is a contradiction, because the definition of $\underline{\Theta}$ says that (\ref{EFODE}) has a solution for $\Theta\in(\underline{\Theta},\pi)$.
Thus, $\underline{\Theta}\to\pi$ as $p\to\infty$.

We consider the case $N=3$.
Then,
%\begin{align*}
\[
F(\Theta)=\cos\theta\sin\theta\int_{\theta}^{\Theta}\frac{d\varphi}{\sin^2\varphi}\\
%&=-\frac{\cos\Theta}{\sin\Theta}\sin\theta\cos\theta+\cos^2\theta\\
=\frac{1}{2}-\frac{\sin(2\theta-\Theta)}{\sin\Theta}.
\]
%\end{align*}
When $N=3$, we have
\begin{align*}
\frac{p+3}{4N-4}-F(\theta)&=\frac{p-1}{8}+\frac{\sin(2\theta-\Theta)}{2\sin\Theta}\\
&>\frac{p-1}{8}-\frac{1}{2\sin\Theta}\ \ \textrm{for}\ \ \theta\in[0,\pi]\backslash\left\{\frac{\Theta}{2}+\frac{3+4n}{4}\pi;\ n\in\Z\right\}.
\end{align*}
Therefore, if $\sin\Theta\ge\frac{4}{p-1}$, then (\ref{EFODE}) has no solution.
Since this nonexistence result is valid for $p\ge 5(=p_{\rm S})$, we assume hereafter that $p\ge p_{\rm S}$.
Since the solution set is a continuous curve and it includes a point near $(\pi,0)$, (\ref{EFODE}) has no solution if $\Theta\le\pi-\arcsin\frac{4}{p-1}$.
Thus, $\underline{\Theta}\ge\pi-\arcsin\frac{4}{p-1}$ for $p\ge p_{\rm S}$.
\end{proof}

We consider the case $p=1$.
First, we investigate the following eigenvalue problem:
\begin{equation}\label{S8E1}
\begin{cases}
\phi''+(N-1)\frac{\cos\theta}{\sin\theta}\phi'+\lambda\phi=0, & 0<\theta<\Theta,\\
\phi(0)=1,\ \phi'(0)=0,\\
\phi(\Theta)=0.
\end{cases}
\end{equation}
\begin{lemma}\label{S8L1}
Let $\lambda_1(\Theta)$ be the first eigenvalue of (\ref{S8E1}).
Then, $\lambda_1(\Theta)$ is continuous and strictly decreasing, $\lambda_1(\Theta)\to 0$ as $\Theta\uparrow\pi$, and $\lambda_1(\Theta)\to\infty$ as $\Theta\downarrow 0$.
In particular, for $N=3$, $\lambda_1(\Theta)=(\frac{\pi}{\Theta})^2-1$.
\end{lemma}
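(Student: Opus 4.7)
The plan is to convert the equation to Sturm--Liouville form by multiplying through by $\sin^{N-1}\theta$, obtaining $(\sin^{N-1}\theta\,\phi')' + \lambda\sin^{N-1}\theta\,\phi = 0$, which yields the variational characterization
\[
\lambda_1(\Theta) = \inf_{\phi \in X_\Theta \setminus\{0\}} \frac{\int_0^\Theta (\phi')^2 \sin^{N-1}\theta\,d\theta}{\int_0^\Theta \phi^2 \sin^{N-1}\theta\,d\theta},
\]
where $X_\Theta$ denotes the weighted $H^1$-space of radial functions on $(0,\Theta)$ with $\phi(\Theta)=0$. Monotonicity then follows from extension by zero: for $\Theta_1 < \Theta_2$, any $\phi \in X_{\Theta_1}$ extends to an element of $X_{\Theta_2}$ with the same Rayleigh quotient, so $\lambda_1(\Theta_2) \le \lambda_1(\Theta_1)$. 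Equality would force the extended minimizer of $X_{\Theta_1}$ to be an eigenfunction on $(0,\Theta_2)$; but it vanishes identically on $(\Theta_1,\Theta_2)$, contradicting unique continuation for the underlying ODE. Hence \emph{strict} monotonicity holds.

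For continuity, I would pull the problem back to the fixed interval $(0,1)$ via the change of variables $\theta = \Theta\tau$; the resulting coefficients depend continuously on $\Theta \in (0,\pi)$, and standard perturbation theory for Sturm--Liouville operators gives continuity of $\lambda_1(\Theta)$. Alternatively, monotonicity guarantees that one-sided limits exist, and one can sandwich them using minimizers from nearby parameters as test functions in the variational formula.

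For the limit $\Theta \uparrow \pi$, the plan is to plug in the cutoff test function $\phi_\delta$ that equals $1$ on $(0,\Theta - \delta)$ and decreases linearly to $0$ on $(\Theta - \delta, \Theta)$. Its Rayleigh quotient is bounded by $\delta^{-2}\int_{\Theta - \delta}^\Theta \sin^{N-1}\theta\,d\theta \big/ \int_0^{\Theta - \delta} \sin^{N-1}\theta\,d\theta$, which tends to $0$ as $\Theta \to \pi$ for any fixed small $\delta$. For $\Theta \downarrow 0$, I would use Cauchy--Schwarz, $|\phi(\theta)|^2 \le \int_\theta^\Theta \sin^{-(N-1)}\varphi\,d\varphi \cdot \int_\theta^\Theta (\phi')^2 \sin^{N-1}\varphi\,d\varphi$, followed by Fubini and the estimate $\sin^{N-1}\theta \asymp \theta^{N-1}$ near $0$, to obtain a weighted Poincar\'e inequality of the form $\int_0^\Theta \phi^2 \sin^{N-1}\theta\,d\theta \le C\Theta^2 \int_0^\Theta (\phi')^2 \sin^{N-1}\theta\,d\theta$, whence $\lambda_1(\Theta) \ge c/\Theta^2 \to \infty$.

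Finally, for $N=3$ the substitution $\psi(\theta) := \phi(\theta)\sin\theta$ reduces the equation to $\psi'' + (1+\lambda)\psi = 0$ on $(0,\Theta)$ with $\psi(0) = \psi(\Theta) = 0$ (the condition at $0$ is automatic, and $\phi(0) = 1$ translates to $\psi'(0) = 1$); the cross terms cancel by direct computation. The least $\lambda$ admitting a nontrivial solution is $\lambda_1 = (\pi/\Theta)^2 - 1$, with eigenfunction $\psi(\theta) = \sin(\pi\theta/\Theta)$. The main technical obstacle will be the careful treatment of the weighted Sobolev framework near the degenerate endpoint $\theta = 0$ to justify the variational setup rigorously, but this is standard for the radial Laplace--Beltrami operator on geodesic balls of $\bbS^N$.
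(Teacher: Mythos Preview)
Your variational approach is sound and genuinely different from the paper's. The paper argues from the ODE side: it fixes $\lambda$, solves the initial value problem, and studies the first zero $\Theta(\lambda)$. Sturm comparison gives strict monotonicity of $\Theta(\lambda)$, the implicit function theorem gives smoothness, and the two limits are obtained by letting $\lambda\to 0$ (the solution converges to the constant $1$, so $\Theta(\lambda)\to\pi$) and $\lambda\to\infty$ (after stereographic projection and a rescaling $s=2\sqrt{\lambda}\,r$, the problem converges to the Bessel equation, whose first zero $j_{N/2-1}$ governs the asymptotics, forcing $\Theta(\lambda)\to 0$). Your Rayleigh-quotient route avoids the Bessel asymptotics entirely and replaces Sturm comparison by the extension-by-zero monotonicity argument; it is arguably more elementary for the endpoint limits, while the paper's shooting argument yields sharper information (the precise Bessel scale) if one wants it. For $N=3$ both proofs coincide.

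One small slip: in your $\Theta\uparrow\pi$ step you claim that for \emph{fixed} $\delta$ the Rayleigh quotient of the cutoff $\phi_\delta$ tends to $0$ as $\Theta\to\pi$. It does not; for fixed $\delta$ the ratio converges to the positive number
\[
\frac{\delta^{-2}\int_{\pi-\delta}^{\pi}\sin^{N-1}\theta\,d\theta}{\int_0^{\pi-\delta}\sin^{N-1}\theta\,d\theta}.
\]
Since $\sin\theta\sim(\pi-\theta)$ near $\pi$, this quantity is of order $\delta^{N-2}$ and hence can be made arbitrarily small by choosing $\delta$ small (here $N\ge 3$ is used). So the correct two-step argument is: first pick $\delta$ so that the limiting ratio is below $\varepsilon$, then send $\Theta\to\pi$. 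With this adjustment your plan goes through.
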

\begin{proof}
First, we consider the case $N=3$.
Let $\bar{\phi}(\theta):=\phi(\theta)\sin\theta$.
Then, $\bar{\phi}$ satisfies
\[
\begin{cases}
\bar{\phi}''+(1+\lambda)\bar{\phi}=0, & 0<\theta<\Theta,\\
\bar{\phi}(0)=\bar{\phi}(\Theta)=0.
\end{cases}
\]
Thus, $\bar{\phi}(\theta)=c\sin\frac{\pi\theta}{\Theta}$ for some $c\in\R$ and $1+\lambda=(\frac{\pi}{\Theta})^2$.
Since $\phi(0)=1$, $c$ is equal to $\frac{\Theta}{\pi}$ and $\phi(\theta)=\frac{\Theta\sin\frac{\pi\theta}{\Theta}}{\pi\sin\theta}$.
Since
%\[
$\phi'(0)=\lim_{\theta\downarrow 0}\frac{\frac{\Theta\sin\frac{\pi\theta}{\Theta}}{\pi\sin\theta}-1}{\theta}=0$
%\]
and $\phi(\theta)>0$ on $[0,\Theta)$, $\phi$ satisfies (\ref{S8E1}) and $\phi$ is the first eigenfunction.
Therefore, $\lambda_1(\Theta)=(\frac{\pi}{\Theta})^2-1$.

Next, we consider the case $N\ge 4$.
By a similar method as in the proof of Lemma~\ref{S3L2} we can prove that, for each $\lambda>0$, there exists $\Theta=\Theta(\lambda)\in (0,\pi)$ such that (\ref{S8E1}) holds.
%We omit the detail.
Let $\phi(\theta,\lambda)$ be the solution of (\ref{S8E1}).
Then $\phi$ is of class $C^1$.
It follows from the uniqueness of the solution of (\ref{S8E1}) that $\phi_{\theta}(\Theta,\lambda)\neq 0$.
Applying the implicit function theorem to $\phi(\theta,\lambda)=0$, we see that $\Theta(\lambda)$, which satisfies $\phi(\Theta(\lambda),\lambda)=0$, is of class $C^1$.

On the other hand, by Theorem~III in \cite{I44}, for each $\Theta_1\in(0,\pi)$, there exists the first eigenvalue $\lambda_1>0$ such that $\Theta(\lambda_1)=\Theta_1$.
By the Sturm-Liouville comparison theorem, if $\lambda_a<\lambda_b$, then $\Theta(\lambda_b)<\Theta(\lambda_a)$, which indicates that $\Theta(\lambda)$ is strictly decreasing.
Thus, the inverse function $\lambda_1=\lambda_1(\Theta)$ exists and it is continuous and strictly decreasing.
Let $\Theta_0\in(0,\pi)$ be fixed.
Then, as $\lambda\to 0$, $\phi(\theta)$ converges to $\phi_*(\theta)$ uniformly on $[0,\Theta_0]$, where $\phi_*$ is the unique solution of the problem
\[
\begin{cases}
\phi_*''+(N-1)\frac{\cos\theta}{\sin\theta}\phi_*'=0, & 0<\theta<\Theta_0,\\
\phi_*(0)=1,\ \phi_*'(0)=0.
\end{cases}
\]
It is clear that $\phi_*(\theta)\equiv 1$.
For each $\Theta_0\in(0,\pi)$, the solution of (\ref{S8E1}) satisfies that $\phi(\theta)>0$ on $[0,\Theta_0]$ for small $\lambda>0$.
We can choose $\Theta_0$ arbitrarily close to $\pi$.
Hence, $\Theta(\lambda)\uparrow\pi$ as $\lambda\to 0$ which indicates that
\begin{equation}\label{S8L1E1}
\lambda_1(\Theta)\to 0\ \textrm{as}\ \Theta\uparrow\pi.
\end{equation}

We consider the initial value problem
\[
\begin{cases}
\phi''+(N-1)\frac{\cos\theta}{\sin\theta}\phi'+\lambda\phi=0, & 0<\theta<\pi,\\
\phi(0)=1,\ \phi'(0)=0.
\end{cases}
\]
We use the same change of variables as in Section~1.
Let $\psi(r):=A(r)^{\frac{N-2}{2}}\phi(\theta)$ and $r:=\tan\frac{\theta}{2}$.
Then $\psi(r)$ satisfies
\[
\begin{cases}
\psi''+\frac{N-1}{r}\psi'+\frac{N(N-2)}{4}A(r)^2\psi+\lambda A(r)^2\psi=0, & 0<r<\infty,\\
\psi(0)=2^{\frac{N-2}{2}},\ \psi'(0)=0.
\end{cases}
\]
Let $\tpsi(s):=\psi(r)$ and $s:=2\sqrt{\lambda}r$.
Then $\tpsi(s)$ satisfies
\[
\begin{cases}
\tpsi''+\frac{N-1}{s}\tpsi'+\frac{N(N-2)}{4\lambda}\left(\frac{\lambda}{\lambda+s^2}\right)^2\tpsi+\left(\frac{\lambda}{\lambda+s^2}\right)^2\tpsi=0, & 0<s<\infty,\\
\tpsi(0)=2^{\frac{N-2}{2}},\ \tpsi'(0)=0.
\end{cases}
\]
Taking the limit $\lambda\to\infty$, we see that $\tpsi(s)$ converges to $\tpsi_*(s)$ uniformly on any bounded interval, where $\tpsi_*(s)$ is the solution of
\[
\begin{cases}
\tpsi''_*+\frac{N-1}{s}\tpsi'_*+\tpsi_*=0, & 0<s<\infty,\\
\tpsi_*(0)=2^{\frac{N-2}{2}},\ \tpsi'_*(0)=0.
\end{cases}
\]
Moreover, $\tpsi_*$ can be explicitly written as $\tpsi_*(s)=cs^{-\frac{N}{2}+1}J_{\frac{N}{2}-1}(s)$ for some constant $c>0$, where $J_{\frac{N}{2}-1}(s)$ represents the Bessel function of the first kind of order $\frac{N}{2}-1$.
It is known that $J_{\frac{N}{2}-1}(s)$ has the first positive zero which we denote by $j_{\frac{N}{2}-1}$.
Since $\tpsi_*$ satisfies the linear equation, the zero $j_{\frac{N}{2}-1}$ is simple.
Hence, when $\lambda$ is large, $\tpsi(s)$ also has the first positive zero, which we denote by $s_1(\lambda)$.
By the uniform convergence of $\tpsi(s)$ to $\tpsi_*(s)$ and the simplicity of $j_{\frac{N}{2}-1}$ we see that $s_1(\lambda)\to j_{\frac{N}{2}-1}$ $(\lambda\to\infty)$.
The first positive zero $r_1(\lambda)$ of $\psi(\,\cdot\,)$ satisfies that $r_1(\lambda)=\frac{s_1(\lambda)}{2\sqrt{\lambda}}$.
Therefore,
%\[
$\lim_{\lambda\to\infty}r_1(\lambda)=\lim_{\lambda\to\infty}\frac{s_1(\lambda)}{2\sqrt{\lambda}}=0$.
%\]
This indicates that $\Theta(\lambda)\to 0$ as $\lambda\to\infty$.
Hence
\begin{equation}\label{S8L1E2}
\lambda_1(\Theta)\to\infty\ \textrm{as}\ \Theta\downarrow 0.
\end{equation}
Because of (\ref{S8L1E1}) and (\ref{S8L1E2}), $\lambda_1(\Theta)$ is defined on $(0,\pi)$.
The proof is complete.
\end{proof}
We study the case where $p>1$ is close to $1$.
Let $\Theta_0\in(0,\pi)$ be fixed.
Since $1<p<p_{\rm S}$, Proposition~\ref{CriSub} says that there is a unique $\Gamma>0$ such that (\ref{EFODE}) with $U(0)=\Gamma$ has a solution for $\Theta=\Theta_0$.
Since $\Gamma$ depends on $p$, we denote $\Gamma$ by $\Gamma(p)$.

We follow the idea of Yanagida~\cite[Theorem~2.6]{YY96} to prove Theorem~\ref{ThE}.
Now we fix $\lambda_1>0$.
Then, by Lemma~\ref{S8L1}, there exists a unique $\Theta_1\in(0,\pi)$ such that (\ref{S8E1}) with $(\lambda,\Theta)=(\lambda_1,\Theta_1)$ has a positive solution.

We set the following problem
\begin{equation}\label{S8E2}
\begin{cases}
W''+(N-1)\frac{\cos\theta}{\sin\theta}W'+\lambda_1W^p=0, & 0<\theta<\Theta_1,\\
W(\Theta_1)=0,\\
W(\theta)>0, & 0<\theta<\Theta_1,\\
W'(0)=0.
\end{cases}
\end{equation}
Since Proposition~\ref{CriSub} is valid for (\ref{S8E2}), (\ref{S8E2}) has a unique solution $W(\theta,p)$ provided that $1<p<p_{\rm S}$.
Let $\Gamma_1(p):=W(0,p)$.
We also consider the initial value problem
\begin{equation}\label{S8E3}
\begin{cases}
Z''+(N-1)\frac{\cos\theta}{\sin\theta}Z'+\lambda_1|Z|^{p-1}Z=0, & 0<\theta<\pi,\\
Z(0)=\Gamma,\ Z'(0)=0.\\
\end{cases}
\end{equation}
Then the following holds:
\begin{lemma}\label{S8L2}
There exists a unique $\Gamma^{\dagger}>0$ such that $\Gamma_1(p)\to\Gamma^{\dagger}$ as $p\downarrow 1$.
\end{lemma}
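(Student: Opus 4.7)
I test the equation for $W(\,\cdot\,,p)$ against the first eigenfunction $\phi$ of (\ref{S8E1}) with $(\lambda,\Theta)=(\lambda_1,\Theta_1)$; the resulting orthogonality identity, expanded to first order in $p-1$, pins down the limiting amplitude uniquely. Writing the $\phi$- and $W$-equations in Sturm--Liouville form $(\sin^{N-1}\theta\,Y')'+\lambda_1 Y^{\sharp}\sin^{N-1}\theta=0$ (with $Y^{\sharp}=Y$ or $Y^p$), multiplying by the other unknown, subtracting, and integrating over $[0,\Theta_1]$, the Wronskian boundary terms vanish because $\sin^{N-1}0=0$ and $\phi(\Theta_1)=W(\Theta_1)=0$; this gives
\begin{equation}\label{planidA}
\int_0^{\Theta_1} W^p\phi\sin^{N-1}\theta\,d\theta=\int_0^{\Theta_1} W\phi\sin^{N-1}\theta\,d\theta.
\end{equation}
Set $v_p(\theta):=W(\theta,p)/\Gamma_1(p)$ and $\mu(p):=\lambda_1\Gamma_1(p)^{p-1}$; the monotonicity of $W$ gives $0<v_p\le 1$, and (\ref{planidA}) becomes
\begin{equation}\label{planidB}
\Gamma_1(p)^{p-1}=\frac{\int_0^{\Theta_1} v_p\phi\sin^{N-1}\theta\,d\theta}{\int_0^{\Theta_1} v_p^p\phi\sin^{N-1}\theta\,d\theta},
\end{equation}
while $v_p$ solves $v_p''+(N-1)\cot\theta\,v_p'+\mu(p)v_p^p=0$ with $v_p(0)=1$, $v_p'(0)=0$, $v_p(\Theta_1)=0$.

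\textbf{Step 1: $\mu(p)\to\lambda_1$.} I first trap $\mu(p)$ in a compact subset of $(0,\infty)$. If $\mu(p_n)\to 0$ along a subsequence, two integrations of $(\sin^{N-1}\theta\,v_{p_n}')'=-\mu(p_n)v_{p_n}^{p_n}\sin^{N-1}\theta$ (using $0\le v_{p_n}\le 1$) give $|v_{p_n}-1|\le C\mu(p_n)\to 0$ uniformly, contradicting $v_{p_n}(\Theta_1)=0$. If $\mu(p_n)\to\infty$, rescaling $\sigma=\sqrt{\mu(p_n)}\,\theta$ converts the equation, on bounded $\sigma$-intervals, to the Bessel equation $\tilde v''+(N-1)\tilde v'/\sigma+\tilde v=0$ from the end of the proof of Lemma~\ref{S8L1}, so the first positive zero of $v_{p_n}$ is asymptotic to $j_{N/2-1}/\sqrt{\mu(p_n)}\to 0$, contradicting $\Theta_1>0$. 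Once $\mu(p)$ is trapped, Arzel\`a--Ascoli (with $C^1$-bounds from the ODE) gives $C^2_{\mathrm{loc}}$-subsequential limits $v^*$ solving the linear eigenvalue problem with some $\mu^*\in(0,\infty)$; since $v^*\ge 0$ and $v^*(0)=1$, uniqueness of the first eigenvalue forces $\mu^*=\lambda_1$ and $v^*=\phi$. Hence the full limits $\mu(p)\to\lambda_1$ and $v_p\to\phi$ in $C^1[0,\Theta_1]$.

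\textbf{Step 2: asymptotic formula.} Expand (\ref{planidB}) to first order in $p-1$. Since $v_p\le 1$, the quantity $(p-1)\log v_p\le 0$, so $|e^{(p-1)\log v_p}-1|\le(p-1)|\log v_p|$ and
\[
\frac{|v_p^p-v_p|}{p-1}=\frac{v_p\,|e^{(p-1)\log v_p}-1|}{p-1}\le v_p|\log v_p|\le e^{-1}
\]
uniformly on $[0,\Theta_1]$. By Step~1 the integrand converges pointwise to $\phi\log\phi$, so dominated convergence yields $\int_0^{\Theta_1}(v_p^p-v_p)\phi\sin^{N-1}\theta\,d\theta=(p-1)\int_0^{\Theta_1}\phi^2\log\phi\sin^{N-1}\theta\,d\theta+o(p-1)$. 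Taking logs in (\ref{planidB}) and Taylor-expanding, with $A:=\int_0^{\Theta_1}\phi^2\sin^{N-1}\theta\,d\theta>0$ and $I:=\int_0^{\Theta_1}\phi^2\log\phi\sin^{N-1}\theta\,d\theta$,
\[
(p-1)\log\Gamma_1(p)=-\frac{(p-1)I}{A}+o(p-1),
\]
so $\log\Gamma_1(p)\to -I/A$ as $p\downarrow 1$. This proves $\Gamma_1(p)\to\Gamma^{\dagger}:=\exp(-I/A)>0$ (both $A$ and $I$ are finite since $s^2|\log s|$ is bounded on $[0,1]$), and the explicit formula makes $\Gamma^{\dagger}$ unique. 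The main obstacle is the dominated-convergence step near $\theta=\Theta_1$, where $v_p\to 0$ and $\log v_p\to-\infty$; it is resolved precisely because $s|\log s|$ stays bounded on $(0,1]$, keeping $v_p|\log v_p|$ controlled up to the boundary.
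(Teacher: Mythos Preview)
Your proof is correct and rests on the same device as the paper's: testing the $W$-equation against the first eigenfunction $\phi$, which is exactly the paper's Green's identity (\ref{S8L2E1}) rewritten as your (\ref{planidA}), and both routes land on $\Gamma^{\dagger}=\exp\!\bigl(-\int_0^{\Theta_1}\phi^2\log\phi\,\sin^{N-1}\theta\,d\theta\big/\int_0^{\Theta_1}\phi^2\sin^{N-1}\theta\,d\theta\bigr)$. The one substantive difference is your Step~1: the paper argues by contradiction from the limit formula for $F(\Theta_1,\Gamma,p)$ computed at \emph{fixed} $\Gamma$, then substitutes $\Gamma=\Gamma_1(p)$ without first ruling out $\Gamma_1(p)\to 0$ or $\infty$; your compactness argument for $\mu(p)=\lambda_1\Gamma_1(p)^{p-1}$ and the convergence $v_p\to\phi$ supply precisely that missing a~priori control, after which your dominated-convergence expansion makes the limit extraction direct rather than by contradiction.
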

\begin{proof}
Let $\phi(\theta)$ be a solution of (\ref{S8E1}) with $(\lambda,\Theta)=(\lambda_1,\Theta_1)$.
Then, as $p\downarrow 1$, the solution $Z(\theta)$ of (\ref{S8E3}) converges to $\Gamma\phi(\theta)$ uniformly on $[0,\Theta_1]$.
Applying Green's formula for $Z$ and $\Gamma\phi$, we obtain
\begin{equation}\label{S8L2E1}
(Z'(\theta)\phi(\theta)-Z(\theta)\phi'(\theta))\sin^{N-1}\theta=-\lambda_1(p-1)F(\theta,\Gamma,p),
\end{equation}
where
\[
F(\theta,\Gamma,p):=\int_0^{\theta}\frac{|Z(\varphi)|^{p-1}-1}{p-1}Z(\varphi)\phi(\varphi)\sin^{N-1}\varphi d\varphi.
\]
Since $Z$ converges to $\Gamma\phi$ uniformly on $[0,\Theta_1]$,
\[
\lim_{p\downarrow 1}F(\Theta_1,\Gamma,p)=\Gamma\int_0^{\Theta_1}(\log\Gamma+\log\phi)\phi(\varphi)^2\sin^{N-1}\varphi d\varphi.
\]
Hence, there exists a unique $\Gamma^{\dagger}\in\R$ such that $\lim_{p\downarrow 1}F(\Theta_1,\Gamma,p)=0$ if and only if $\Gamma=\Gamma^{\dagger}$.

We prove the lemma by contradiction.
We assume that there exists some $\delta>0$ such that $\Gamma_1(p)\not\in [\Gamma^{\dagger}-\delta,\Gamma^{\dagger}+\delta]$ as $p\downarrow 1$.
Let $\Gamma=\Gamma_1(p)$. Then, $Z(\theta)=W(\theta)$ on $[0,\Theta_1]$.
The left-hand side of (\ref{S8L2E1}) is $0$ at $\theta=\Theta_1$.
On the other hand, the right-hand side of (\ref{S8L2E1}) is some non-zero constant at $\theta=\Theta_1$ when $p$ is close to $1$.
This is a contradiction, and therefore, $\Gamma_1(p)\to\Gamma^{\dagger}$ as $p\downarrow 1$.
\end{proof}

\begin{proof}[Proof of Theorem~\ref{ThE}]
We take the same $\lambda_1$ and $\Theta_1$ as above.
Let $W$ be the solution of (\ref{S8E2}).
Let $U(\theta):=\lambda_1^{\frac{1}{p-1}}W(\theta)$.
Then $U$ is a solution of (\ref{EFODE}) with $\Theta=\Theta_1$ and $\Gamma(p)=U(0)=\lambda_1^{\frac{1}{p-1}}\Gamma_1(p)$.
By Lemma~\ref{S8L2} we see the following:\\
(i) If $\lambda_1>1$, then $\lambda_1^{\frac{1}{p-1}}\Gamma_1(p)\to\infty$ as $p\downarrow 1$.\\
(ii) If $\lambda_1=1$, then $\lambda_1^{\frac{1}{p-1}}\Gamma_1(p)\to\Gamma^{\dagger}$ as $p\downarrow 1$.\\
(iii) If $\lambda_1<1$, then $\lambda_1^{\frac{1}{p-1}}\Gamma_1(p)\to 0$ as $p\downarrow 1$.\\
Here, by Lemma~\ref{S8L1}, there exists some $\Theta^{\dagger}\in(0,\pi)$ such that $\Theta_1>\Theta^{\dagger}$ for $\lambda_1<1$, $\Theta_1=\Theta^{\dagger}$ for $\lambda_1=1$, and $\Theta_1<\Theta^{\dagger}$ for $\lambda_1>1$.
Thus, the statement of Theorem~\ref{ThE} holds.
\end{proof}

% **********************************************************
% **********************************************************
% **********************************************************
% Section 9
% **********************************************************
% **********************************************************
% **********************************************************
%\section{Proof of Proposition~\ref{CriSub}}

% **********************************************************
% **********************************************************
% **********************************************************
% Bibliography
% **********************************************************
% **********************************************************
% **********************************************************

\end{document}